\def\Xint#1{\mathchoice
  {\XXint\displaystyle\textstyle{#1}}%
  {\XXint\textstyle\scriptstyle{#1}}%
  {\XXint\scriptstyle\scriptscriptstyle{#1}}%
  {\XXint\scriptscriptstyle\scriptscriptstyle{#1}}%
  \!\int}
\def\XXint#1#2#3{{\setbox0=\hbox{$#1{#2#3}{\int}$}
  \vcenter{\hbox{$#2#3$}}\kern-.5\wd0}}
\def\fint{\Xint-}
\newcommand{\al}{\alpha}
\newcommand{\dt}{\delta}
\newcommand{\eps}{\varepsilon}
\newcommand{\HTI}{\tilde{H}}
\newcommand{\lm}{\lambda}
\newcommand{\e}{\varepsilon}
\newcommand{\R}{\mathbb{R}}
\newcommand{\Z}{\mathbb{Z}}
\newcommand{\Sd}{\mathbb{S}^2}
 \DeclareMathOperator{\Id}{Id}
\renewcommand{\a }{\alpha }
\renewcommand{\l }{\lambda}
\renewcommand{\O}{\Omega}
\renewcommand{\OE}{\Omega_\varepsilon}
\renewcommand{\S}{\Sigma}
\newcommand{\N}{\mathbb{N}}
\def\bbm[#1]{\mbox{\boldmath $#1$}}
\newcommand{\beq }{\begin{equation}}
\newcommand{\eeq }{\end{equation}}
\newtheorem{theorem}{Theorem}[section]
\newtheorem{lemma}[theorem]{Lemma}
\newtheorem{proposition}[theorem]{Proposition}
\newtheorem{remark}[theorem]{Remark}
\def\sideremark#1{\ifvmode\leavevmode\fi\vadjust{\vbox to0pt{\vss
 \hbox to 0pt{\hskip\hsize\hskip1em
 \vbox{\hsize3cm\tiny\raggedright\pretolerance10000
  \noindent #1\hfill}\hss}\vbox to8pt{\vfil}\vss}}}%
\newcommand{\mybinom}[2]{\Bigl(\begin{array}{@{}c@{}}#1\\#2\end{array}\Bigr)} 
\title[The singular mean field problem with sign-changing potentials]{Compactness, existence and multiplicity for the singular mean field problem with sign-changing potentials}
\author{ Francesca de Marchis, Rafael L\'{o}pez-Soriano and David Ruiz}
\address{Francesca De Marchis, Dipartimento di Matematica, Universit\`{a} di Roma {\em Sapienza}, P.le Aldo Moro 5, 00185 Roma, Italy.}
\email{demarchis@mat.uniroma1.it}
\address{Rafael L\'{o}pez-Soriano, Departamento de An\'{a}lisis Matem\'{a}tico, Universidad de Granada, Campus Fuentenueva, 18071 Granada, Spain.}
\email{rafals@ugr.es}
\address{David Ruiz, Departamento de An\'{a}lisis Matem\'{a}tico, Universidad de Granada, Campus Fuentenueva, 18071 Granada, Spain.}
\email{daruiz@ugr.es}
\thanks{}
\email{}
\date{}
\keywords{Prescribed Gaussian curvature problem, conical singularities, variational methods, Morse theory.}
\subjclass[2010]{35J20, 35R01, 53A30.}
\begin{document}

\begin{abstract} In this paper we consider a mean field problem on a compact surface without boundary in presence of conical singularities. The corresponding equation, named after Liouville, appears in the Gaussian curvature prescription problem in Geometry, and also in the Electroweak Theory and in the abelian Chern-Simons-Higgs model in Physics. Our contribution focuses on the case of sign-changing potentials, and gives results on compactness, existence and multiplicity of solutions.

\end{abstract}

\maketitle

\section{Introduction}

The classical problem of prescribing the Gaussian curvature on a compact surface $\Sigma$ under a conformal change of the metric dates back to \cite{Ber, KazWar}. Let us denote by $g$ the original metric, $\tilde{g}$ the new one and $e^v$ the conformal factor (that is, $\tilde{g} = e^v g$). The problem reduces to solving the PDE

$$ - \Delta_g v + 2 K_g(x) = 2 K(x)e^{v},$$
where $K_g$, $K$ denote the curvature with respect to $g$ and $\tilde{g}$, respectively. Observe that by the Uniformization Theorem we can assume that $K_g$ is a constant. The solvability of this equation has been studied for a long time, and it is not possible to give here a comprehensive list of references.  

The above setting needs to be modified if one wants to prescribe also the appearance of conical singularities on the surface, a case that was first studied in \cite{Troy}. We recall that a conformal metric $\tilde{g}$ has a conical singularity at $p$ of order $\alpha \in (-1,+\infty)$ if there exist local coordinates $z$ in $\mathbb{C}$ such that $z(p)=0$ and 

\beq
\label{conical}\hat{g}(z) = e^{\psi} |z|^{2 \alpha} |dz|^2,
\eeq
where $\hat{g}$ is the local expression of $g$ and $\psi$ is continuous in a neighborhood of $0$ in $\mathbb{C}$ and $C^2$ outside the point $p$.

\bigskip 
In this case we are led with weak solutions of the problem

\beq \label{sing}
- \Delta_g v + 2 K_g= 2 K(x)e^{v} - 4 \pi \sum_{j=1}^m \alpha_j \delta_{p_j},
\eeq
where $\delta_{p_j}$ denotes a Dirac delta at the point $p_j \in \Sigma$ (see Appendix of \cite{BarDemMal} for a rigorous deduction of \eqref{sing}). Integrating the above equation and taking into account the Gauss-Bonnet formula, we obtain

\beq \label{bonnet0}
4 \pi \chi(\Sigma) = 2 \int_{\Sigma} K e^v \, dV_g - 4 \pi \sum_{j=1}^m \alpha_j.
\eeq

%

We now transform equation \eqref{sing} into another one which admits a variational structure. Let $G(x,y)$ be the Green function of the Laplace-Beltrami operator on $\Sigma$ associated to $g$, i.e.

\beq\label{fgreen}
-\Delta_g G(x,y)=\dt_y-\frac{1}{|\Sigma|}\quad \mbox{in}\quad \Sigma,\qquad\quad\int_{\Sigma }G(x,y)dV_{g}(x)=0.
\eeq

We define

\beq\label{accame}
h_m(x)=4\pi\sum\limits_{j=1}^m\al_j G(x,p_j)=2\sum\limits_{j=1}^m \al_j\log \left (\frac{1}{d(x,p_j)} \right )+ 2 \pi \alpha_j H(x, p_j),
\eeq
where $H$ is the regular part of $G$. By the change of variable
$$
u= v + h_m
$$
we can pass to the equation
\beq\label{equation}\tag*{$(*)_\lambda$}
-\Delta_g u=\lm\left(\frac{\tilde K e^{u}}{\int_{\Sigma}\tilde K e^{u}dV_{g}}-\frac1{|\Sigma|}\right)\quad\mbox{in}\quad \Sigma,
\eeq
where
\begin{equation}\label{tildeK}
\tilde K=K e^{-h_m},
\end{equation}
and, according to \eqref{bonnet0}, $\lambda$ is given by

\beq \label{bonnet} \lambda = 4\pi(\chi(\Sigma) + \sum_{j=1}^m\alpha_j). \end{equation}

Notice that $e^v = e^{-h_m} e^{u}$ which is consistent with \eqref{conical}. Observe also that

$$
\tilde{K}(x) \simeq d(x,p_j)^{2\alpha_j} K(x) \quad \mbox{\quad close to $p_j$.}
$$

\medskip This equation appears also in Physics, in the mathematical  Glashow-Salam-Weinberg model of the Electroweak Theory and in the abelian Chern-Simons-Higgs model, see \cite{Dun, Lai, Tarbook, yang}. In this context, the points $p_j$ represent vortices of order $\alpha_j \in \N$. There are by now many works dealing with this problem, see \cite{BarDemMal, BarMal, BarLin, BarLin2, BarLinT, bt, btnoradial, Car1, CarMal, ChenLin, ChenLin2, DDI, DeMLS, kuolin, MalchiodiRuizSphere, Mon, Tarhb}. In this framework, the equation receives the name of (singular) mean field equation. We highlight that under this perspective the restriction \eqref{bonnet} is not present.

\medskip In this paper we are concerned with equation \ref{equation} in the case in which $K$ is a sign-changing function. We give existence and generic multiplicity results by means of variational methods. For that one also needs to show that solutions of \ref{equation} are a priori bounded, which in this case means that they form a compact set.

\medskip Regarding compactness of solutions, one can pose the question as follows: given $u_n$ a sequence
of solutions of \ref{equation} for  $\lambda=\lambda_n \to \lambda_0$, is it uniformly bounded? This question has been addressed in \cite{breme, lisha} for the regular problem, and in \cite{barmon, bt} for the equation with vortices, always for positive potentials $K(x)$. In summary, if blow-up occurs then $e^{u_n}$ concentrate around a finite set of critical points, and a quantization argument shows that $\lambda_0$ must belong to a certain discrete critical set. Here, the assumption on the positivity of $K$ is not just a technical one, as can be inferred from some recent examples of blowing-up solutions in \cite{BGS, DePR}. Those solutions concentrate around local maxima of $K$ at $0$ level.

The question of compactness has been elided in \cite{RuizSoriano} by using energy estimates in a related problem posed on a surface with boundary. This technique is however very much restricted to the case considered there.

For sign-changing functions $K$ the first related compactness result is \cite{ChenLiAnn}. That paper is concerned with the scalar curvature prescription problem, a higher dimension analogue of our problem which has also attracted much attention in the literature. The authors are able to show compactness of solutions under the hypothesis

\begin{enumerate}[label=(H1), ref=(H1)]
\item \label{H1} $K$ is a sign-changing $C^{2,\alpha}$ function with $\nabla K (x) \neq 0$ for any $x \in \Sigma$ with $K(x)=0$.
\end{enumerate}

An improvement of this technique has been given in \cite{ChenLiMPMS}. The general idea is to first derive uniform integral estimates, which allow one to obtain a priori estimates in the region $\{ x \in \Sigma:\ K(x)<-\delta\}$, for $\delta>0$ small. Then the moving plane technique is used to compare the values of $u$ on both sides of the nodal curve $\Gamma=\{x \in \Sigma: \ K(x)=0\}$. This, together with the aforementioned integral estimate, implies boundedness in a neighborhood of $\Gamma$. Finally, we rely on \cite{lisha, bt, barmon} for the region $ \{ x \in \Sigma:\ K(x)> \delta \}$.

The approach of \cite{ChenLiAnn} has been partially adapted to problem \ref{equation} in \cite{ChenLiDCDS, DeMLS}. However, those results use the stereographic projection to pass to a global problem in the plane and are hence restricted to $\Sigma = \Sd$. Moreover, the derivation of the integral estimate \cite[Lemma 2.2]{ChenLiDCDS}, essential in both papers, is not completely clear. One of the goals of this paper is to settle the question of compactness: we show compactness for \ref{equation} in any compact surface under assumption \ref{H1}.

Our approach follows the ideas of \cite{ChenLiMPMS}. The main difficulty with respect to \cite{ChenLiMPMS} comes from the fact that $u_n$ is neither positive nor uniformly bounded from below, a priori. This is a problem for the integral estimate in \cite{ChenLiDCDS, ChenLiMPMS}, and also for the use of the moving plane method near the nodal curve. In our proofs we first estimate the negative part of $u_n$ by using Kato inequality. This is the key for the proof of the integral estimate and is also essential to perform the comparison argument by the moving plane method.

For what concerns existence and multiplicity of solutions, we shall restrict ourselves to the case of positive orders $\alpha_j$. Our proofs make use of variational methods. Indeed, problem \ref{equation} is the Euler-Lagrange equation of the energy functional

\beq\label{functional}
I_\lambda(u)=\frac12\int_{\S} |\nabla u|^2 dV_{g}+\frac\lambda{|\S|}\int_{\S} u \, dV_{g}-\lambda \log\int_{\S} \tilde K e^u dV_{g},
\eeq
defined in the domain

\beq\label{X}
X=\left\{u\in H^1(\S):  \ \int_{\S} \tilde K e^u\,dV_{g}>0\right\}.
\eeq
If $\lambda < 8\pi$, then $I_\lambda$ is coercive and a minimizer exists, see \cite{Troy}. Instead, $I_\lambda$ is not bounded from below if $\lambda > 8 \pi$. This range is the main concern of this paper, and we shall use a Morse theoretical approach to find solutions of \ref{equation} which are saddle-type critical points of $I_\lambda$. In order to do that we study the topology of the energy sublevels of $I_\lambda$. In a certain sense, a function $u$ at a low energy level concentrate around a certain number of points of $\Sigma$, and the topology of the space of those configurations plays a crucial role in the min-max argument. This approach has been followed in different works. For instance, \cite{DjadliMalchiodi} considers an analogue problem in dimension 4, whose ideas can be applied equally well to the regular mean field problem (see \cite{Djadli}), whereas \cite{BarDemMal, CarMal, MalchiodiRuizSphere} deal with the singular problem. Let us point out that all those papers consider positive functions $K$. For sign changing potentials, the location of the points is restricted to the set $\overline{\{x \in \Sigma: \ K(x)>0\}}$, and this fact changes dramatically the topology of the space of configurations.

Roughly speaking, to obtain existence of solutions one needs to show that the very low energy sublevels form a non-contractible set. If $\Sigma= \Sd$ this study has been carried out in \cite{DeMLS}, yielding existence of solutions. In this paper we consider the case of general $\Sigma$ and we also prove multiplicity results. For multiplicity, we need to estimate the sum of the Betti numbers of the energy sublevels. This multiplicity result is valid under nondegeneracy assumptions, which are generic (in the couple $(K,g)$) by a transversality argument.

In general, min-max arguments yield existence of solutions provided that the well-known Palais-Smale property is satisfied. In this type of problems the validity of that property is still an open problem; however we can circumvent this difficulty by using the deformation lemma established in \cite{Lucia}, in the spirit of \cite{Struwe}. This technique has now become well known, and relies on compactness of solutions of approximating problems. At this point our aforementioned compactness result comes into play.


The rest of the paper is organized as follows. In Section~\ref{sectnotrespre} we set the notation, recall some preliminary results and state the main theorems proved in the paper. Section~\ref{sectcompac} is devoted to the proof of the compactness result, see Theorem~\ref{thmcompa}. In Section~\ref{sectproj} we give a description of the topology of the energy sublevels. The estimation of the dimension of the homology groups of those sets is made in Section~\ref{sectcomphomol}. Finally, the proofs of our main results are completed in Section~\ref{sectproof}.

\section{Notations, main results and preliminaries}\label{sectnotrespre}

\setcounter{equation}{0}

In this section we fix the notation used in this paper, formulate the principal results obtained and collect some preliminary known results.

From now on $(\S,g)$ will be a compact surface without boundary $\Sigma$ equipped with a Riemannian metric $g$ and $d(x,y)$ will denote the distance between two points $x, y \in \S$ induced by the ambient metric. $B_p(r)$ stands for the open ball of radius $r>0$ and center $p \in \Sigma$ and

\[
\Omega^r=\{x\in \S: \ d(x,\Omega)<r\}.
\]
Given $f \in L^1(\S)$, we denote the mean value of $f$ by $\fint_{\S} f=\frac1{|\S|} \int_{\S} f$, where $|\S|$ is the area of $\S$. \\
Since the functional $I_\lambda$ is invariant under addition of constants, we can restrict its domain to functions with $0$ mean. In other words, we can consider $I_\lambda$ defined in $\bar X$, where

\beq\label{Xbar}
\bar X=\{u\in X: \ \int_\Sigma u \,dV_g=0\}.
\end{equation}
For a real number $a$, we introduce the following notation for the sublevels of the energy functional $I_\lambda$ (defined in \eqref{functional}) restricted to $\bar X$
$$
I_{\lambda}^{a}= \{u\in \bar
X:\ I_\lambda(u)\leq a\}.
$$

The symbol $\amalg$ will be employed to denote the disjoint union of sets.

Throughout the paper, the sign $\simeq$ refers to homotopy equivalences, while $\cong$ refers to homeomorphisms between topological spaces or isomorphisms between groups.

Given a metric space $M$ and $k\in \mathbb{N}$, we denote by $Bar_k(M)$ the set of formal barycenters of order $k$ on $M$, namely the following family of unit measures supported in at most $k$ points

\beq\label{formbar}
Bar_k(M)=\left\{\sum_{i=1}^k t_i \delta_{x_i}:\ t_i\in[0,1],\ \sum_{i=1}^k t_i =1,\ x_i \in M \right\}.
\eeq
We consider $Bar_k(M)$ as a topological space with the weak$^*$ topology of measures.\\

Uninfluential positive constants are denoted by $C$, and the value of $C$ is allowed to vary from formula to formula.

\subsection{Main results}

\

Let us define the sets
$$
\S^{+}=\{x\in \S: \ K(x) > 0\}, \qquad \S^{-}=\{x\in \S: \ K(x) < 0\}, \qquad \Gamma=\{x\in \S: \ K(x)=0\}.
$$

Assumption \ref{H1} implies that the nodal line $\Gamma$ is regular and that
\beq\label{Npm}
N^{+}=\#\{\textnormal{connected components of $\S^+$}\}<+\infty.
\eeq

In what follows we will assume that:

\

\begin{enumerate}[label=(H2), ref=(H2)]
\item \label{H2} $p_j\notin \Gamma$ \ for all $j\in\{1,\ldots,m\}$.
\end{enumerate}

\

So we can suppose, up to reordering, that there exists $\ell\in\{0,\ldots,m\}$ such that
\beq\label{ell}
p_j\in{\S^+}\textrm{ for $j\in\{1\,\ldots,\ell\}$,}\quad p_j\in\S^- \textrm{\; for $j\in\{\ell+1,\ldots,m\}$.}
\eeq

\

For $K>0$ it is known that the blow-up phenomena can only occur if the parameter $\lambda$ takes the form $8\pi r + \sum_{j=1}^{m}8\pi(1+\alpha_j)n_j$, with $r\in \mathbb{N}\cup \{0\}, n_j\in\{0,1\}$ (see \cite{bt}). Therefore the set of solutions is compact if $\lambda$ has a different form. In the next theorem we obtain an analogous conclusion without the sign restriction, where the set of critical values is
\beq\label{crit}
\Lambda=\left\{8\pi r + \sum_{j=1}^{\ell}8\pi(1+\alpha_j)n_j: \ r\in \mathbb{N}\cup \{0\}, n_j\in\{0,1\}\right\} \setminus \{0\}.
\eeq

\begin{theorem}\label{thmcompa} Assume that $\alpha_1,\ldots,\alpha_m>-1$ and let $K_n$ be a sequence of functions with $K_n \to K$ in $C^{2,\alpha}$ sense, where $K$ satisfies \ref{H1}, \ref{H2}. Let  $u_n$ be a sequence of weak solutions of the problem

\beq\label{equationbis}
-\Delta_g u_n= \tilde K_n e^{u_n}-f_n \quad \mbox{in}\quad \Sigma,
\eeq
with $f_n\to f$ in $C^{0,\alpha}$ sense and $\tilde{K}_n=K_n e^{-h_m}$ with $h_m$ given by \eqref{accame}. Then, the following alternative holds

\begin{enumerate}
\item either $u_n$ is uniformly bounded in $L^{\infty}(\Sigma)$;

\item or, up to a subsequence, $u_n$ diverges to $-\infty$ uniformly;

\item or, up to a subsequence, $ \lim_{n\to +\infty} \int_\Sigma \tilde{K}_n e^{u_n} \in \Lambda$, defined in \eqref{crit}.
\end{enumerate}

\end{theorem}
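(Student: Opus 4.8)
The plan is to follow the strategy of \cite{ChenLiMPMS}, splitting the surface according to the sign of $K$, and the crucial first step is to control the negative part of $u_n$. Set $u_n^- = \max\{-u_n,0\}$. Since $\tilde K_n e^{u_n} = \tilde K_n^+ e^{u_n} - \tilde K_n^- e^{u_n}$ and the first integral is $O(1)$ (by the equation, testing with $1$, one gets $\int_\Sigma \tilde K_n e^{u_n} = \int_\Sigma f_n = O(1)$; combined with the Trudinger--Moser inequality applied on $\Sigma^+$ one gets $\int_\Sigma \tilde K_n^+ e^{u_n} = O(1)$), the Kato inequality $-\Delta_g u_n^- \le (\Delta_g u_n)\,\mathrm{sgn}(u_n^-) \le \tilde K_n^- e^{u_n} + |f_n| \le C$ in the distributional sense shows that $u_n^- - c_n$ is bounded above, where $c_n = \fint_\Sigma u_n^-$; a Brezis--Merle / elliptic estimate then gives $u_n^- \le c_n + C$, so $u_n \ge -c_n - C$ pointwise. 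Two cases arise: if $c_n$ is bounded, then $u_n$ is bounded below and we are essentially in the setting of \cite{lisha, bt, barmon}, which after restricting to $\Sigma^+$ yields alternative (1) or alternative (3) via the usual quantization. If $c_n \to +\infty$, one must show that in fact $u_n \to -\infty$ uniformly, i.e.\ alternative (2).

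The heart of the matter is the \emph{a priori estimate near the nodal line $\Gamma$} and the \emph{integral estimate in $\Sigma^-$}, both of which in \cite{ChenLiDCDS, ChenLiMPMS} require a lower bound on $u_n$ that is not available here a priori. My plan is to use the bound $u_n \ge -c_n - C$ just obtained as a substitute: working with $w_n = u_n + c_n$, which is bounded below by $-C$, one re-derives the integral estimate $\int_{\{K < -\delta\}} e^{w_n} \le C(\delta)$ and, from it, $L^\infty$ bounds for $w_n$ from above on $\{K < -\delta\}$ by standard elliptic theory applied to $-\Delta_g w_n = \tilde K_n e^{-c_n} e^{w_n} - f_n$ (note $\tilde K_n e^{-c_n} e^{w_n}$ is small in $L^1$ when $c_n$ is large, which only helps). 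For the region near $\Gamma$, I would transfer the problem to a half-plane model via normal coordinates along $\Gamma$ (using \ref{H1} so that $\Gamma$ is a smooth curve and $K$ changes sign transversally, and \ref{H2} so that no singular point sits on $\Gamma$, hence $\tilde K_n$ behaves like $\pm d(x,\Gamma)$ times a positive function there), and run the moving-plane / moving-sphere argument on $w_n$ to compare its values on the two sides of $\Gamma$; since $w_n$ is uniformly bounded above on the negative side, the comparison forces a uniform bound above in a full neighborhood of $\Gamma$. Combining the three regions: $w_n$ is uniformly bounded above on $\Sigma^- \cup \Gamma^r$, so any blow-up of $e^{w_n}$ is confined to $\Sigma^+$ minus a neighborhood of $\Gamma$, where the classical theory applies and gives quantization of $\lim \int_\Sigma \tilde K_n e^{u_n} = e^{-c_n}\lim\int e^{w_n} \cdot(\ldots)$.

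It remains to close the trichotomy in the case $c_n\to+\infty$: I claim this forces alternative (2). Indeed if $c_n\to+\infty$ then $\int_\Sigma \tilde K_n^+ e^{u_n} = e^{-c_n}\int \tilde K_n^+ e^{w_n} \to 0$ unless $w_n$ blows up on $\Sigma^+$ fast enough to compensate; but on the blow-up set the standard local mass is quantized and \emph{bounded below} by $8\pi(1+\alpha_j)$ (or $8\pi$ at a regular point), so $\int_\Sigma \tilde K_n^+ e^{w_n}$ cannot go to $0$, contradiction — hence if $w_n$ does blow up we land in (3), and if it does not, then $u_n = w_n - c_n \to -\infty$ uniformly since $w_n$ is then bounded in $L^\infty$, giving (2). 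One has to be slightly careful that the blow-up analysis of \cite{bt} for the singular equation $-\Delta w_n = V_n e^{w_n} - \tilde f_n$ applies with $V_n = \tilde K_n e^{-c_n} \to 0$; but this is fine, the quantization only uses the local structure $V_n = (\text{positive smooth})\cdot d(x,p_j)^{2\alpha_j}$ up to a vanishing factor, and a vanishing factor does not create new blow-up values — it can only kill blow-up, which is accounted for by alternatives (1) and (2).

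The main obstacle, in my view, is the moving-plane argument near $\Gamma$ in this non-positive, non-bounded-below setting: one must set up a Kelvin-type transform or moving-plane reflection across the nodal curve on a curved surface, control the error terms coming from the metric and from the perturbations $K_n\to K$, $f_n\to f$, and — the genuinely new point compared with \cite{ChenLiMPMS} — make sure that all these estimates survive after the renormalization $u_n\rightsquigarrow w_n = u_n + c_n$, i.e.\ that the constant $c_n$ (which may diverge) only enters through the harmless small factor $e^{-c_n}$ multiplying $e^{w_n}$ and never spoils the sign conditions or the integral bounds. Getting the Kato-inequality step to interface cleanly with the moving-plane step is where I expect to spend most of the effort.
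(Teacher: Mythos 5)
Your broad strategy matches the paper's: control $u_n^-$ by a Kato-type argument, establish an integral estimate away from $\Gamma$, then an upper bound near $\Gamma$ by moving planes, and finally invoke the Bartolucci--Tarantello quantization on the remaining region $\Sigma^+\setminus\Gamma^\delta$. The a priori bound $u_n\ge -\fint_\Sigma u_n^- - C$ you extract from Kato's inequality is exactly the paper's Lemma~3.3~b). However, several intermediate claims along the way are not correct as stated, and they would need to be replaced by the paper's actual mechanisms.

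First, the parenthetical claim that ``$\int_\Sigma \tilde K_n^+ e^{u_n}=O(1)$ by the Trudinger--Moser inequality on $\Sigma^+$'' is unjustified: there is no a priori $H^1$ bound on $u_n$, which is precisely the difficulty the paper flags explicitly. Fortunately this claim is not needed for the Kato step (there one only uses $e^{u_n}\le 1$ on $\{u_n<0\}$), so it is a red herring rather than a fatal flaw — but it should be removed.

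Second, and more seriously, the renormalized integral estimate $\int_{\{K<-\delta\}} e^{w_n}\le C(\delta)$ does \emph{not} follow. What the equation and Kato give is a bound on $\bigl|\int_{\Sigma_0}\tilde K_n e^{u_n}\bigr|$ for $\Sigma_0\subset\subset\Sigma^\pm$; rewriting with $w_n=u_n+c_n$ this becomes $e^{-c_n}\int_{\Sigma_0}|\tilde K_n|\,e^{w_n}\le C$, i.e.\ $\int e^{w_n}$ may be as large as $O(e^{c_n})$. The paper proves the integral bound on $\int_{\Sigma_0}\tilde K_n e^{u_n}$ (its Lemma~3.4) by testing the equation against $\varphi^2$, where $\varphi$ is the first Dirichlet eigenfunction of a slightly larger domain, and the key cancellation $\int u_n^- \Delta(\varphi^2)=\int\bigl(u_n^--\fint u_n^-\bigr)\Delta(\varphi^2)$ is controlled using an $L^1$ (in fact all $L^p$, $p<\infty$) bound on $u_n^--\fint u_n^-$ — a bound you do not derive (your Kato step only yields the one-sided pointwise estimate). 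That $L^p$ bound is an essential half of the Kato lemma in the paper, obtained from the Green representation and the fact that the measures $-\Delta_g u_n^-$ are divergences with uniformly bounded total variation.

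Third, the moving-plane step requires, before it can be launched, uniform bounds on the oscillation of $u_n$ on $\partial_l\Omega_\varepsilon$ and on $|\nabla u_n|$ away from $\Gamma^\delta$ and the vortices (the paper's Lemma~3.6, eqs.~(3.7)--(3.8)). These feed into the construction of the harmonic comparison function $w$ and the choice of the constant $C_0$ in the auxiliary function $v(x)=u(x)-w(x)+C_0(\varepsilon+\gamma(x_2)-x_1)$. You gesture at ``uniform bound above on the negative side,'' but a pointwise upper bound on $u_n$ in $\Sigma^-\setminus\Gamma^\delta$ by itself is not enough; the oscillation and gradient estimates (proved using both halves of the Kato lemma and the mean-value inequality for subharmonic functions) are what make the reflection argument close.

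Finally, the dichotomy on $c_n=\fint u_n^-$ and the renormalization $u_n\rightsquigarrow w_n$ are unnecessary detours. Once upper bounds are established on $\Sigma^-\setminus\Gamma^\delta$ and on $\Gamma^\varepsilon$, and the integral bound on $\int_{\Sigma_1}\tilde K_n e^{u_n}$ is in hand for $\Sigma_1=\Sigma^+\setminus\overline{\Gamma^\delta}$, one directly invokes \cite{bt} on $\Sigma_1$. If $u_n$ stays bounded above everywhere, elliptic regularity yields alternatives (1) or (2) depending on whether $\fint u_n$ stays bounded. If $u_n$ blows up, the concentration analysis gives $\tilde K_n e^{u_n}\rightharpoonup \sum\beta(q_i)\delta_{q_i}$; the paper then shows $u_n\to-\infty$ uniformly off $\Sigma_1$ by a comparison with the solution of a linear problem (majorizing $\tilde K_ne^{u_n}-f_n$ by an $L^p$ function on $\Sigma\setminus\Sigma_1$), verifies the bounded-oscillation hypothesis on $\partial\Sigma_1$ via the Green representation formula, and applies the quantization of \cite{bt} to land in alternative (3). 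Your heuristic about the local mass ``compensating'' $e^{-c_n}$ captures the idea, but the hypotheses of the quantization theorem must actually be checked — the bounded oscillation on $\partial\Sigma_1$ in particular — and this requires the full strength of the preceding estimates.
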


We point out that that the conical singularities located in $\S^-$ do not play any role in the compactness result. Observe also that equation \ref{equation} can be written in the form \eqref{equationbis} by adding a suitable constant to $u_n=u$, if $\tilde K_n=\tilde K$ and $f_n = \frac{\lambda}{|\Sigma|}$.

\

In order to state our existence result we introduce an additional assumption on $K$:

\begin{enumerate}[label=(H3), ref=(H3)]
\item \label{H3} $N^+>k$ or $\S^+$ has a connected component which is not simply connected,
\end{enumerate}
where $N^+$ is defined in \eqref{Npm}.

\begin{theorem}\label{thm:existencegeneral}
Let $\alpha_1,\ldots,\alpha_\ell>0$, where $\ell$ is defined in \eqref{ell}, and $\lambda\in(8 k \pi, 8 (k+1)\pi) \setminus\Lambda$. If \ref{H1}, \ref{H2}, \ref{H3} are satisfied then \ref{equation} admits a solution.
\end{theorem}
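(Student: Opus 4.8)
The plan is to obtain the solution via a min-max scheme based on the topology of the low sublevels of $I_\lambda$, combined with the compactness result of Theorem~\ref{thmcompa}. First I would recall the improved Moser-Trudinger inequality adapted to the singular sign-changing setting: if $e^u$ is ``spread'' over more than $k$ disjoint regions of $\S^+$ (or more precisely, if the measure $\tilde K e^u / \int \tilde K e^u$ is far, in the weak$^*$ sense, from $Bar_k(\overline{\S^+})$ suitably weighted by the conical singularities in $\S^+$), then $\int |\nabla u|^2$ can be controlled with a constant strictly better than the one in the Chen--Li inequality, forcing $I_\lambda$ to stay bounded below on such configurations. This is the analytic input that lets one construct a continuous retraction from a very low sublevel $I_\lambda^{-L}$ (with $L$ large) onto a space homotopy equivalent to the weighted barycenters $Bar_k$ of $\overline{\S^+}$ — this is exactly the description of the topology of the sublevels carried out in Section~\ref{sectproj} of the paper, which I would invoke directly.

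Next I would exhibit a nontrivial element in the relative homology (or homotopy) of the pair $(I_\lambda^{-L}, I_\lambda^{-L'})$ for $L' \gg L$, equivalently show that $I_\lambda^{-L}$ is not contractible. Here assumption \ref{H3} enters: either $N^+ > k$, so $\overline{\S^+}$ contains at least $k+1$ disjoint connected pieces and $Bar_k$ of such a set carries nontrivial topology (for instance the join construction gives a sphere of positive dimension, or at least a non-simply-connected/non-contractible space), or one of the components of $\S^+$ is not simply connected, in which case already $Bar_1$ of that component — and hence $Bar_k$ — is non-contractible because it retracts onto a loop. In both cases one produces a cycle $z$ in the low sublevel that is not a boundary. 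One then builds the min-max class by coning off $z$: let $\mathcal{D}$ be a contractible space (a cone or a ball) with $\partial \mathcal{D}$ mapping to $z$, consider maps $h : \mathcal{D} \to \bar X$ extending a fixed map on $\partial \mathcal{D}$ into $I_\lambda^{-L}$, and set $c = \inf_{h} \sup_{\xi \in \mathcal{D}} I_\lambda(h(\xi))$. Using the test-function estimate (building functions that concentrate on $k+1$ points and checking their energy diverges to $-\infty$, which is standard in this circle of ideas and already needed for the topological description), one shows $c > -L$, so $c$ is a genuine min-max level above the sublevel where the cycle lives; and since $I_\lambda$ is finite on any compact family $h(\mathcal D)$, $c < +\infty$.

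To conclude that $c$ is a critical value, and hence \ref{equation} has a solution, I cannot appeal directly to a Palais--Smale condition (unknown here), so instead I would use the monotonicity trick together with the deformation lemma of Struwe type from \cite{Lucia}, exactly as advertised in the introduction: perturb $\lambda$ slightly and use that for a.e.\ nearby parameter the corresponding min-max level is attained, producing solutions $u_n$ of $(*)_{\lambda_n}$ with $\lambda_n \to \lambda$. Then Theorem~\ref{thmcompa} applies: alternative (2) is excluded because $c > -L$ is finite (a diverging-to-$-\infty$ sequence would push the energy, hence the min-max level, to $-\infty$), and alternative (3) is excluded precisely because $\lambda \in (8k\pi, 8(k+1)\pi)\setminus\Lambda$ lies outside the quantized critical set $\Lambda$; this forces alternative (1), i.e.\ $u_n$ bounded in $L^\infty$, and by elliptic regularity a subsequence converges to a solution of $(*)_\lambda$. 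The main obstacle I anticipate is the first step — establishing the improved Moser--Trudinger inequality and the resulting homotopy equivalence of the low sublevel with the weighted barycenter set in the sign-changing, singular case, keeping careful track of how the conical points $p_1,\dots,p_\ell$ lying in $\S^+$ modify the local mass thresholds (the $8\pi(1+\alpha_j)$ terms in $\Lambda$) — but since the excerpt tells us this is handled in Sections~\ref{sectproj}--\ref{sectcomphomol}, in the actual proof of Theorem~\ref{thm:existencegeneral} one mainly has to assemble these pieces and verify that \ref{H3} guarantees the needed nontrivial topology and that the min-max level stays in the admissible window.
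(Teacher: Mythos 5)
Your proposal takes essentially the same route as the paper: $I_\lambda^{-L}$ projects onto $Bar_k(Z_{N,M})$ (Proposition~\ref{prop:lowsublevelsgeneral}), hypothesis \ref{H3} guarantees nontrivial reduced homology there (Proposition~\ref{prop:nuova} and Proposition~\ref{thm:formula}), the high sublevel $I_\lambda^b$ is contractible (Lemma~\ref{lemma:highsublevels}), and the Lucia deformation lemma (Lemma~\ref{lemma:deformation}) --- which encapsulates exactly the Struwe monotonicity trick you describe and relies on Theorem~\ref{thmcompa} for the needed compactness --- forces a critical level between $-L$ and $b$. One small correction: the nontrivial relative homology sits in the pair $(I_\lambda^b,I_\lambda^{-L})$, not in $(I_\lambda^{-L},I_\lambda^{-L'})$ for $L'\gg L$ (both of the latter are already low sublevels homotopy equivalent to $Bar_k(Z_{N,M})$, so that relative group is trivial); you state the correct criterion immediately afterwards when you reduce matters to the non-contractibility of $I_\lambda^{-L}$.
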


For $K>0$, and thus $\Sigma^+=\Sigma$ and $N^+=1$, \ref{H3} is satisfied if the surface $\Sigma$ has positive genus; this case has been covered in \cite{BarDemMal}.

If $\Sigma^+$ has trivial topology Theorem \ref{thm:existencegeneral} is not applicable. We can give a result also in this case, following the ideas of \cite{MalchiodiRuizSphere}. For that, we define the set:

\beq\label{Jlambda}
J_\lambda=\{p_j\in \S^+: \ \lambda<8\pi(1+\alpha_j)\}
\eeq
and we introduce the hypothesis

\begin{enumerate}[label=(H4), ref=(H4)]
\item \label{H4} $J_{\lambda}\neq\emptyset$.
\end{enumerate}

\begin{theorem}\label{thm:existence8pi16pi}
Let $\alpha_1,\ldots,\alpha_\ell\in(0,1]$, where $\ell$ is defined in \eqref{ell}, and $\lambda\in(8\pi,16\pi)\setminus\Lambda$. If \ref{H1}, \ref{H2}, \ref{H4} are satisfied then \ref{equation} admits a solution.
\end{theorem}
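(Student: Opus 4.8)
\textbf{Proof proposal for Theorem~\ref{thm:existence8pi16pi}.}

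The plan is to use the Morse-theoretical min-max scheme described in the introduction, adapted to the range $\lambda\in(8\pi,16\pi)$ and to a situation where $\Sigma^+$ may be contractible. The key point, following \cite{MalchiodiRuizSphere}, is that even when $\Sigma^+$ carries trivial topology, the presence of a singular point $p_j\in J_\lambda$ (for which $\lambda<8\pi(1+\alpha_j)$) provides ``extra'' topology: near such a $p_j$ the coefficient $\tilde K$ behaves like $d(x,p_j)^{2\alpha_j}K(x)$, so concentration of mass at $p_j$ alone costs energy $8\pi(1+\alpha_j)>\lambda$ and is therefore prevented, which means that a function at very low energy must either spread its mass over a region away from $p_j$ or split it into (at least) two bubbles. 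First I would recall from \cite{Troy, Djadli, BarDemMal} the standard improved Moser--Trudinger inequality: if $e^u$ (normalized) is, up to small error, supported in $k$ balls then $I_\lambda$ is bounded below; hence for $\lambda<16\pi$ a function with very negative energy has its mass $e^u/\int \tilde K e^u$ concentrated, in the measure-theoretic sense, near at most one point of $\overline{\Sigma^+}$, and by the singular weight that point cannot be any $p_j\in J_\lambda$. This is the content of the analogue of the ``description of low sublevels'' in Section~\ref{sectproj}, which I would quote: for $L$ large, $I_\lambda^{-L}$ retracts onto a space homotopy equivalent to a model built from $\overline{\Sigma^+}$ and the points of $J_\lambda$, essentially a wedge-type construction $\Sigma^+ \amalg J_\lambda$ glued appropriately, in any case a space which is \emph{not} contractible precisely because $J_\lambda\neq\emptyset$ forces at least two distinct ``concentration regions'' to be available while the weight forbids collapsing onto $p_j$.

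The main steps, in order, would be: (i) show that for $L$ large the sublevel $I_\lambda^{-L}$ is non-contractible, by constructing an explicit map from a circle (or from $\overline{\Sigma^+}$) into $I_\lambda^{-L}$ using test functions concentrating at points of $\overline{\Sigma^+}$, parametrized so that the parameter $p_j\in J_\lambda$ cannot be reached — this gives a non-trivial element of $H_*(I_\lambda^{-L})$ or, more simply, a map that is not contractible in $I_\lambda^{-L}$ but becomes contractible in $\bar X$; (ii) show, conversely, that this map \emph{is} contractible in $\bar X$ (or in a high sublevel $I_\lambda^{b}$), so that the pair $(\bar X, I_\lambda^{-L})$ — equivalently $(I_\lambda^b, I_\lambda^{-L})$ — carries non-trivial relative topology, producing a min-max level $c\in(-L,b)$; (iii) invoke the monotonicity/deformation argument of \cite{Lucia, Struwe}: one works with $I_{\lambda'}$ for $\lambda'$ in a neighbourhood of $\lambda$, the map $\lambda'\mapsto c(\lambda')/\lambda'$ (or a suitable normalization) is monotone, hence differentiable a.e., and at points of differentiability one obtains a bounded Palais--Smale sequence at level $c(\lambda')$, yielding a solution of $(*)_{\lambda'}$; (iv) pass to the limit $\lambda'\to\lambda$ using the compactness Theorem~\ref{thmcompa}: since $\lambda\notin\Lambda$, alternative (3) is excluded, alternative (2) is excluded because the solutions have energy staying in a fixed bounded range (the min-max level is finite), so alternative (1) holds and $u_{\lambda'}\to u_\lambda$ in $C^2$, giving the desired solution of $(*)_\lambda$.

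For step (iv) one has to be slightly careful: the solutions produced are of the modified equation and one needs the energies $I_\lambda(u_{\lambda'})$ to be uniformly bounded above \emph{and} below along the sequence; boundedness above is immediate from the min-max characterization (comparing with a fixed test configuration), and boundedness below follows because, once $C^2$-convergence is in force, the limit is a genuine solution, and solutions that would have $-\infty$ energy would correspond to $\int\tilde K e^{u_n}\to 0$, which by the quantization in Theorem~\ref{thmcompa} is incompatible with the alternatives unless $u_n\to-\infty$, a case that contradicts the energy upper bound. I would also need to record that hypothesis \ref{H4} together with $\alpha_j\in(0,1]$ and $\lambda\in(8\pi,16\pi)$ is genuinely used only to guarantee $J_\lambda\neq\emptyset$ and hence the non-contractibility in step (i); the restriction $\alpha_j\le 1$ enters to keep the analysis within the ``at most one bubble plus one singular point'' regime (so that the relevant threshold is $16\pi$ and not higher).

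\medskip

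I expect the main obstacle to be step (i)–(ii): making precise the statement that $I_\lambda^{-L}$ is non-contractible while the relevant test map dies in $\bar X$. Concretely this requires (a) the sharp localized Moser--Trudinger inequality with the singular weight, distinguishing the ``mass concentrates at a regular point of $\overline{\Sigma^+}$'' scenario (energy can go to $-\infty$) from the ``mass concentrates at $p_j\in J_\lambda$'' scenario (energy bounded below), together with a quantitative projection of low-energy functions onto a finite-dimensional model space of measures; and (b) a careful homotopy-theoretic bookkeeping of that model space to exhibit a non-trivial class. This is exactly the technical heart of \cite{MalchiodiRuizSphere} transplanted from $\Sd$ to a general $\Sigma$ with sign-changing $K$, and the sign-change is what confines the concentration set to $\overline{\Sigma^+}$; the argument goes through because $\overline{\Sigma^+}$ is a (possibly disconnected) compact surface with boundary $\Gamma$ and one only needs its — possibly trivial — topology \emph{plus} the discrete set $J_\lambda$ to see a non-contractible configuration. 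The deformation-lemma and compactness inputs (steps (iii)–(iv)) are by now routine given Theorem~\ref{thmcompa}.
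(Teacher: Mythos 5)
Your proposal follows essentially the same route as the paper: for $\lambda\in(8\pi,16\pi)$ the low sublevel $I_\lambda^{-L}$ is projected onto a non-contractible model — here $Bar_1(W_{N,M,J_\lambda})\cong W_{N,M,J_\lambda}$ (Proposition~\ref{prop:lowsublevels8pi16pi}), obtained from $\overline{\Sigma^+}$ by removing small balls around the points of $J_\lambda$ and retracting, which has $H_1\neq 0$ precisely when $J_\lambda\neq\emptyset$ (Lemma~\ref{lemma:formula}) — while high sublevels are contractible (Lemma~\ref{lemma:highsublevels}), so $(I_\lambda^b,I_\lambda^{-L})$ has non-trivial relative homology (Proposition~\ref{prop:nuova2}) and one concludes by the deformation argument backed by Theorem~\ref{thmcompa}. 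The only difference is presentational: you unpack Struwe's monotonicity-and-limit trick for steps (iii)--(iv), whereas the paper wraps the same content into Lucia's deformation lemma (Lemma~\ref{lemma:deformation}) and cites it as a black box; you correctly identify the localized Moser--Trudinger estimate near $J_\lambda$ and the homotopy bookkeeping as the technical heart, which is exactly what Propositions~\ref{prop:lowsublevels8pi16pi} and \ref{prop:nuova2} supply.
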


\begin{remark} There are many examples of applications of these results to the geometric problem commented in the Introduction. Just to exhibit an example, let us consider the problem of prescribing a conformal metric in $\Sigma = \mathbb{T}^2$  with gaussian curvature $K$ and one conical point $p$ of order $\alpha$. Assume that assumptions \ref{H1}, \ref{H2} are satisfied. Then Theorem \ref{thm:existencegeneral} implies that the problem is solvable if one of the following assumptions are satisfied:
\begin{enumerate}
\item $\alpha \in (2k, 2k+2)$ with $k \in \N$ and $\Sigma^+$ has more than $k$ connected components.
\item $\alpha \in (2k, 2k+2)$ with $k \in \N$ and $\Sigma^+$ has a component which is not simply connected.
\end{enumerate}
Let us now consider the same problem but with $ m $ conical points, all of them of order $\alpha$. Then Theorem \ref{thm:existence8pi16pi} implies that the geometric problem is solvable if $\alpha\in(0,1]$ and $2< m \, \alpha < 2 + 2\alpha$ and at least one conical point is placed in $\Sigma^+$.

Many other examples can be constructed.

\end{remark}

\bigskip

In order to now state our multiplicity results we introduce some more notation. Let us denote by $A_i$ the non-contractible connected components of $\S^+$ and $C_h$ be the contractible ones, $i=1,\ldots,N$, $h=1,\ldots,M$ and $N, M\in\N\cup\{0\}$, $N+M=N^+$. Obviously,

\[
\Sigma^+=\coprod_{i=1}^N A_i \amalg \coprod_{h=1}^M C_h.
\]

Recall that a \textit{bouquet of $g$ circles} is a set $B^g=\bigcup_{j=1}^g S'_j$ where $S'_j$ are simple closed curves verifying that $S'_i \cap S'_j=\{q\}$. If $A_i$ has genus $\textit{g}_i$ and $\textit{b}_i$ boundary components, it is well known that $A_i$ can be retracted to an inner bouquet $B^{{g}_i}$, where $g_i=2\textit{g}_i+\textit{b}_i-1$. Instead, $C_h$ is homotopically equivalent to any point $y_h\in C_h$. Therefore

\beq\label{concomp}
\Sigma^+\simeq \coprod_{i=1}^N B^{{g}_i}\,\amalg\,\{y_1,\ldots,y_M\}, \quad \mbox{ with } \quad g_i=2\textit{g}_i+\textit{b}_i-1 \quad \mbox{ for \quad $i=1,\ldots,N$.}
\end{equation}

Define $\mathcal{M}$ as the space of all Riemannian metrics on $\Sigma$ equipped with the $\mathcal{C}^{2,\alpha}$ norm and

\beq\label{K_ell}
\mathcal{K}_{\ell}=\left\lbrace K:\Sigma\to\R: \begin{array}{l}
\mbox{$K$ satisfies \ref{H1}, \ref{H2}} \\
\mbox{$p_1,\ldots,p_\ell\in\Sigma^+,\;p_{\ell+1},\ldots,p_m\in\Sigma^-$}
\end{array}\right\rbrace ,
\end{equation}
also equipped with the $C^{2,\alpha}$ norm.

\begin{theorem}\label{thm:multiplicitygeneral}
Let $\ell\in\{0,\ldots,m\}$ and let us assume that $\alpha_1,\ldots,\alpha_\ell>0$. If $\lambda\in(8k\pi,8(k+1)\pi)\setminus\Lambda$, $k\in\N$, then for a generic choice of function $K$ and metric $g$ (namely for $(K,g)$ in an open and dense subset of $\mathcal{K}_{\ell}\times \mathcal M$),
then
\[
\#\{\mbox{solutions of \ref{equation}}\}\geq \sum_{q\geq0} d_q,
\]
where if \quad $k+1-M \leq N$, then

\[
d_q=\left\{
\begin{array}{ll}
\mybinom{N+M-1}{N+M-p}\mathlarger{\sum}\limits_{\tiny\begin{array}{c}a_1+\ldots+a_N=k-p+1\\a_i\geq0\end{array}}\!\!\!\!\!\!\!\!\!\!\!\!\!\!\!\!\!\!\!s_{a_1,g_1}\ldots s_{a_N,g_N} &\mbox{\footnotesize \quad if\;\, $q=2k-p\;\;(1\leq p\leq k+1),$}\\
0 & \mbox{\quad\footnotesize otherwise;}
\end{array}\right.
\]
while if \quad $k+1-M \geq N$, then

\[
d_q=\left\{
\begin{array}{ll}
\mybinom{N+M-1}{N+M-p}\mathlarger{\sum}\limits_{\tiny\begin{array}{c}a_1+\ldots+a_N=k-p+1\\a_i\geq0\end{array}}\!\!\!\!\!\!\!\!\!\!\!\!\!\!\!\!\!\!\!s_{a_1,g_1}\ldots s_{a_N,g_N} &\mbox{\footnotesize \quad if\;\, $q=2k-p\;\;(1\leq p\leq N),$}\\
\mybinom{N+M-s}{M-s}\mathlarger{\sum}\limits_{\tiny\begin{array}{c}a_1+\ldots+a_N=k-N-s+1\\a_i\geq0\end{array}}\!\!\!\!\!\!\!\!\!\!\!\!\!\!\!\!\!\!\!s_{a_1,g_1}\ldots s_{a_N,g_N} &\mbox{\footnotesize \quad if\;\, $q=2k-N-s\;\;(1\leq s\leq M),$}\\
0 & \mbox{\quad\footnotesize otherwise;}
\end{array}\right.
\]
with $s_{a,g}=\binom{a+g-1}{g-1}$ and $g_i$ defined in \eqref{concomp}. \\

Moreover we adopt the following convention: if $N=0$

\[
\mathlarger{\sum}\limits_{\tiny\begin{array}{c}a_1+\ldots+a_N=h\\a_i\geq0\end{array}}\!\!\!\!\!\!\!\!\!\!\!\!\!s_{a_1,g_1}\ldots s_{a_N,g_N}=\left\{\begin{array}{ll}
1\quad&\mbox{if $h=0$,}\\
0&\mbox{if $h\neq 0$.}
\end{array}
\right.
\]\\
Notice that if $k+1-M=N$ the two formulas coincide.

\end{theorem}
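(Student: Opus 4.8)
The plan is to obtain the lower bound on the number of solutions via Morse theory applied to the functional $I_\lambda$ on $\bar X$, exactly in the spirit of \cite{BarDemMal, MalchiodiRuizSphere}. First I would invoke the compactness result, Theorem~\ref{thmcompa}, together with the deformation lemma of \cite{Lucia} to ensure that the usual Morse inequalities are available: since $\lambda\notin\Lambda$, solutions of approximating problems form a compact set, so one may perform the deformations and conclude that for $-L$ large enough and $b$ large enough the pair $(I_\lambda^b, I_\lambda^{-L})$ is homotopy equivalent to $(\bar X, I_\lambda^{-L})$, and $\bar X$ is contractible. Then, denoting by $\mathcal{P}_t(A,B)$ the Poincar\'e polynomial of the pair, the Morse inequalities give
\[
\sum_{u \text{ solution}} \mathcal{P}_t(\{u\}) = \mathcal{P}_t(I_\lambda^b, I_\lambda^{-L}) + (1+t) Q(t),
\]
with $Q$ a polynomial with nonnegative coefficients; under the generic nondegeneracy assumption (which holds on an open dense subset of $\mathcal K_\ell \times \mathcal M$ by a transversality argument, as announced in the Introduction), each solution contributes a single monomial $t^{\mathrm{ind}(u)}$, so $\#\{\text{solutions}\} \ge \sum_q \dim H_q(I_\lambda^b, I_\lambda^{-L})$.

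The core of the argument is therefore to compute, or rather bound below, the Betti numbers of the pair $(I_\lambda^b, I_\lambda^{-L})$, which by excision/deformation reduces to $H_q(\bar X, I_\lambda^{-L}) \cong \tilde H_{q-1}(I_\lambda^{-L})$ (up to the usual long exact sequence and contractibility of $\bar X$). Here one uses the topological description of the low sublevels established in Sections~\ref{sectproj}--\ref{sectcomphomol}: for $\lambda \in (8k\pi, 8(k+1)\pi)$, the sublevel $I_\lambda^{-L}$ is homotopy equivalent to the space of formal barycenters $Bar_k(\Sigma^+)$ (more precisely a weighted/barycenter-type space built over the connected components of $\Sigma^+$ with the conical points $p_1,\dots,p_\ell$ playing a distinguished role). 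Then $d_q$ is simply $\dim H_q$ of that barycenter space, and the stated closed formulas are the combinatorial output of a Mayer--Vietoris / join computation: each non-contractible component $A_i$ retracts to a bouquet $B^{g_i}$ and contributes, through the barycenter functor, the factor $\sum s_{a_i, g_i}$ with $s_{a,g} = \binom{a+g-1}{g-1}$ counting the degree-$a$ part of $H_*(Bar_\infty(B^g))$; each contractible component $C_h$ contributes a point; and the binomial coefficients $\binom{N+M-1}{N+M-p}$ etc.\ record how the total mass $k+1$ is distributed among the $N+M$ components. The two cases $k+1-M \lessgtr N$ correspond to whether the $k$ units of mass can all be absorbed by the non-contractible components or whether some must sit on the contractible ones, which changes the homological degrees at which classes appear (hence the two displayed formulas, coinciding when $k+1-M=N$).

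Concretely I would carry out: (1) establish the homotopy equivalence $I_\lambda^{-L} \simeq Bar_k(\Sigma^+)$, citing the projection construction and the test-function estimates from the previous sections; (2) replace $\Sigma^+$ by its homotopy model $\coprod_i B^{g_i} \amalg \{y_1,\dots,y_M\}$ from \eqref{concomp} and use that $Bar_k$ of a disjoint union is the union over all partitions $k = k_1 + \dots + k_{N+M}$ of joins $Bar_{k_1}(X_1) * \dots * Bar_{k_{N+M}}(X_{N+M})$, whose reduced homology is the tensor product shifted by the number of nonempty factors minus one; (3) feed in $\tilde H_*(Bar_a(B^g))$, whose Poincar\'e series is known to be $\sum_{a\ge 1} s_{a,g}\, t^{2a-1}$ (a computation going back to the barycenter literature, e.g.\ \cite{DjadliMalchiodi}); (4) collect terms according to the number of contractible components carrying positive mass, which produces the binomial prefactors and the two regimes; (5) translate back through $H_q(\bar X, I_\lambda^{-L}) \cong \tilde H_{q-1}(I_\lambda^{-L})$ to get the degree shift $q = 2k - p$, resp.\ $q = 2k - N - s$, matching the statement; (6) conclude via the Morse inequalities and the genericity of nondegeneracy. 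The main obstacle I expect is step (2)--(4): keeping the join/Mayer--Vietoris bookkeeping correct so that the weights $t_i$ (which can vanish) are handled properly and the combinatorial identity assembling $\binom{N+M-1}{N+M-p}$ and $\sum s_{a_1,g_1}\cdots s_{a_N,g_N}$ comes out exactly as stated — in particular getting the boundary cases ($N=0$, or mass overflowing onto contractible components) right, which is precisely why the statement needs the explicit convention for the empty sum and the split into $k+1-M \le N$ versus $\ge N$.
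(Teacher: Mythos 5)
Your proposal matches the paper's strategy at the level of architecture: Morse inequalities obtained from the compactness theorem plus a deformation lemma (Propositions~\ref{prop:genericallyMorse} and~\ref{prop:MorseIlambda} in the paper), the comparison of the low sublevel with a barycenter space over $Z_{N,M}=\coprod_i B^{g_i}\amalg\{y_1,\dots,y_M\}$ (Propositions~\ref{prop:lowsublevelsgeneral},~\ref{prop:nuova}), and then the combinatorial computation of the Betti numbers of $Bar_k(Z_{N,M})$ (Proposition~\ref{thm:formula}). Two remarks, the second of which points to a genuine gap. First, a small imprecision: you assert a homotopy equivalence $I_\lambda^{-L}\simeq Bar_k(\Sigma^+)$, but the paper does not prove (and does not need) this. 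It only constructs maps $\Psi:I_\lambda^{-L}\to Bar_k(Z_{N,M})$ and $j:Bar_k(Z_{N,M})\to I_\lambda^{-L}$ with $\Psi\circ j$ homotopic to the identity, so that $\tilde{H}_*(Bar_k(Z_{N,M}))$ injects into $\tilde{H}_*(I_\lambda^{-L})$; this one-sided bound is exactly what the Morse inequalities require. This is easy to fix and not substantive.

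The more serious issue is your step~(2): the claim that $Bar_k$ of a disjoint union is "the union over all partitions $k=k_1+\cdots+k_{N+M}$ of joins $Bar_{k_1}(X_1)\ast\cdots\ast Bar_{k_{N+M}}(X_{N+M})$" is incorrect, and a Mayer--Vietoris computation over such a cover would not produce the stated answer. The difficulty is that the closures of the strata where both pieces carry positive mass are glued to lower-order barycenters in a way that is not captured by a na\"ive join decomposition. The precise input the paper relies on is the recent result of Ahmedou--Kallel--Ndiaye, quoted as Proposition~\ref{prop:AKN}: for $C$, $D$ disjoint connected spaces, $Bar_k(C\amalg D)$ has the homology of
\[
Bar_k(C)\vee \Sigma Bar_{k-1}(C)\vee Bar_k(D)\vee\Sigma Bar_{k-1}(D)\vee\bigvee_{\ell=1}^{k-1}\bigl(Bar_{k-\ell}(C)\ast Bar_{\ell}(D)\bigr)\vee\bigvee_{\ell=2}^{k-1}\bigl(\Sigma Bar_{k-\ell}(C)\bigr)\ast Bar_{\ell-1}(D),
\]
which contains suspension terms with no counterpart in a pure "union of joins" picture. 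Proposition~\ref{thm:formula} then runs a double induction (on $N$ with $M=0$, then on $M$), feeding in the single-bouquet computation $\dim\tilde{H}_q(Bar_a(B^g))=s_{a,g}$ for $q=2a-1$ and $0$ otherwise, and tracking the suspension shifts to produce the two regimes $k+1-M\lessgtr N$. Without the correct decomposition theorem your steps (2)--(4) cannot be executed as written; this is the actual technical heart of the multiplicity count, not a matter of bookkeeping. The remaining steps (1), (5), (6) are correct and in line with the paper.
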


\

We point out that $ \#\{\mbox{solutions of \ref{equation}}\} \to + \infty$ as $N^+=N+M \to + \infty$.

\

The above result gives no information if $\Sigma^+$ has trivial topology; however, our second multiplicity result can be applied also in this case.

\begin{theorem}\label{thm:multiplicity8pi16pi}
Let $\ell\in\{0,\ldots,m\}$ and let us assume that $\alpha_1,\ldots,\alpha_\ell\in(0,1]$. If $\lambda\in(8\pi,16\pi)\setminus \Lambda$, then for a generic choice of function $K$  and metric $g$ (namely for $(K,g)$ in an open and dense set of $\mathcal{K}_{\ell}\times \mathcal M$), then

\[
\#\{\mbox{solutions of \ref{equation}}\}\geq N^+ -1+\sum_{i=1}^N g_i+|J_\lambda|,
\]
where the set $J_\lambda$ is defined in \eqref{Jlambda} and $g_i$ in \eqref{concomp}.
\end{theorem}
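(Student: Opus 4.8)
The plan is to combine the topological description of the energy sublevels obtained in Section~\ref{sectproj} with the homological estimates of Section~\ref{sectcomphomol}, and then to run a Morse-theoretical min-max argument, using the compactness result Theorem~\ref{thmcompa} to overcome the failure of the Palais-Smale condition. The setting of Theorem~\ref{thm:multiplicity8pi16pi} is the case $\lambda\in(8\pi,16\pi)$, so at low energy a function concentrates around at most one point of $\overline{\Sigma^+}$, but additionally the singular points $p_j\in J_\lambda$ give rise to extra concentration profiles (as in \cite{MalchiodiRuizSphere}). Accordingly, I expect the very low sublevel $I_\lambda^{-L}$ to be homotopy equivalent to a space built from $\Sigma^+$ together with $|J_\lambda|$ extra ``spikes'': morally $\Sigma^+\vee(\text{wedge of }|J_\lambda|\text{ circles or points})$, retracting by \eqref{concomp} to $\coprod_{i=1}^N B^{g_i}\amalg\{y_1,\dots,y_M\}$ decorated by the $p_j$'s. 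The first block of solutions, of cardinality $N^+-1+\sum_{i=1}^N g_i$, will come exactly from the reduced homology of this space: a bouquet $B^{g_i}$ contributes $g_i$ to $\dim H_1$, the $N^+$ connected components contribute $N^+-1$ to $\dim \tilde H_0$, and the $|J_\lambda|$ extra pieces contribute the remaining $|J_\lambda|$ classes.

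First I would recall from Section~\ref{sectproj} that for $L$ large the sublevel $I_\lambda^{-L}$ is homotopy equivalent to an explicit model space $Y$, and that there is a natural ``inclusion'' of $Y$ into a higher sublevel $I_\lambda^{b}$ (for suitable $b$) which is homotopically trivial, i.e.\ the pair $(I_\lambda^b, I_\lambda^{-L})$ has nontrivial relative homology in each degree where $\tilde H_*(Y)$ is nontrivial. Concretely, since $I_\lambda$ is, for $\lambda<16\pi$, of ``mountain-pass plus higher'' type with the low sublevel not contractible, each independent nonzero class in $\tilde H_q(Y)$ produces, by standard min-max over the corresponding cycles, a critical value. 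The genericity hypothesis on $(K,g)$ in an open dense subset of $\mathcal K_\ell\times\mathcal M$ enters to guarantee that all the relevant critical points are nondegenerate (by a transversality argument à la Sard–Smale), so that distinct min-max levels which a priori could coincide actually yield distinct solutions, and the count becomes exactly $\sum_q \dim \tilde H_q(Y)=N^+-1+\sum_{i=1}^N g_i+|J_\lambda|$. The deformation lemma of \cite{Lucia}, in the spirit of \cite{Struwe}, is used in place of Palais–Smale: it is applicable precisely because of Theorem~\ref{thmcompa}, which ensures compactness of solutions of the approximating problems for $\lambda\in(8\pi,16\pi)\setminus\Lambda$ (note $16\pi\le$ the smallest relevant quantized value when $\alpha_j\le 1$, so the constraint $\lambda\notin\Lambda$ is what one needs).

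The key steps in order are: (i) identify the homotopy type of $I_\lambda^{-L}$ with the model $Y$ and compute $\tilde H_*(Y;\mathbb Z_2)$, getting $\sum_q \dim \tilde H_q(Y)=N^+-1+\sum_{i=1}^N g_i+|J_\lambda|$ — here the role of $J_\lambda$ is to add the singular concentration profiles, and hypothesis $\alpha_j\in(0,1]$ keeps $\lambda<8\pi(1+\alpha_j)$ meaningful only for $\alpha_j$ in that range while also preventing higher-order blow-up; (ii) show that the inclusion $I_\lambda^{-L}\hookrightarrow I_\lambda^{b}$ kills all of $\tilde H_*(Y)$, so that for each nonzero homology class there is a well-defined min-max value in $(-L,b]$; (iii) invoke the deformation lemma via \cite{Lucia} and Theorem~\ref{thmcompa} to conclude each such level is a critical value; (iv) use the generic nondegeneracy from the transversality argument to separate the critical points and obtain the stated lower bound on their number. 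I expect step (i) — pinning down precisely how the $|J_\lambda|$ singular spikes attach to $\Sigma^+$ in the low sublevel and verifying the homotopy equivalence — to be the main obstacle, since it requires the delicate improved Moser–Trudinger / concentration analysis near the conical points (as in \cite{MalchiodiRuizSphere, BarDemMal}) adapted to the sign-changing setting where $u_n$ need not be bounded below; the compactness input of Theorem~\ref{thmcompa} and the Kato-inequality estimates developed in Section~\ref{sectcompac} are what make this adaptation possible.
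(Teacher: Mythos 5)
Your proposal follows essentially the same route as the paper: generic nondegeneracy from a Sard–Smale/transversality argument (Proposition~\ref{prop:genericallyMorse}), Morse inequalities for $I_\lambda$ made available by Lucia's deformation lemma plus the compactness Theorem~\ref{thmcompa} (Proposition~\ref{prop:MorseIlambda}), the relative-homology bound $\dim H_{q+1}(I_\lambda^b,I_\lambda^{-L};\Z_2)\geq\dim\tilde H_q(Bar_1(W_{N,M,J_\lambda});\Z_2)$ from the projection/test-function pair (Proposition~\ref{prop:nuova2}), and the explicit homology count of Lemma~\ref{lemma:formula}, which is precisely $N^+-1+\sum_i g_i+|J_\lambda|$. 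You reconstruct all four ingredients in the right order, and the paper itself proves this theorem as a one-line reduction to the proof of Theorem~\ref{thm:multiplicitygeneral} via exactly these substitutions.

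Two small remarks. First, the model space is $W_{N,M,J_\lambda}$ in \eqref{WNMJ}, a disjoint union of bouquets in which each $p_j\in J_\lambda$ adds a circle to the bouquet of \emph{its own} connected component, not a wedge $\Sigma^+\vee(\text{bouquet of }|J_\lambda|\text{ circles})$ at a single basepoint; the two spaces are not homotopy equivalent in general, but over $\Z_2$ they have identical reduced homology, so your Betti-number count is unaffected. Second, the paper does not need the genericity to ``separate min-max levels'': the Morse inequalities of Theorem~\ref{MorseIneq} count critical points of each index directly once all critical points in $\{-L\leq I_\lambda\leq b\}$ are nondegenerate, regardless of whether different homology classes produce the same critical value, so that part of your step (iv) is an unnecessary (though harmless) extra consideration.
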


\begin{remark}
We point out that by standard elliptic estimates any solution $u$ of \ref{equation} is classical if all the $\alpha_j$'s are positive, while if $\alpha_j\in(-1,0)$ for $j\in J\subset\{1,\ldots,m\}$, then $u\in C^{2}(\Sigma\setminus\{\cup_{j\in J} p_j\})\cap C^{0}(\Sigma)$.
\end{remark}

\subsection{Topological and Morse-theoretical preliminaries}\label{ss:topprel}

\

In this subsection we recall a classical theorem on Morse inequalities. Furthermore we give a short review of basic notions of algebraic topology needed to get the multiplicity estimates. Finally, we state a recent result concerning the topology of barycenter sets with disconnected base space.

Given a pair of spaces
$(X,A)$ we will denote by $H_q(X,A;\Z_2)$ the relative q-th homology group with coefficient
in $\Z_2$ and by $\HTI_q(X;\Z_2)=H_q(X,x_0;\Z_2)$ the reduced homology with coefficient in $\Z_2$, where $x_0\in X$. We adopt the convention that $\HTI_q(X;\Z_2)=0$ for any $q<0$.\\
Finally, if $X$, $Y$, are two topological spaces and $f:X\to Y$ is a continuous function, we will denote by $f_*:H_q(X;\Z_2)\to H_q(Y;\Z_2)$ the pushforward morphism induced by $f$.

\

Let us first recall a classical result in Morse theory.
\begin{theorem}\label{MorseIneq} (see e.g. \cite{Chang}, Theorem
4.3) Suppose that $H$ is a Hilbert manifold, $I\in C^2(H;\R)$ satisfies
the $(PS)$-condition at any level $c\in[a,b]$, where $a$, $b$ are regular values for $I$. If all the critical points of $I$ in $\{a\leq I\leq b\}$ are nondegenerate, then

\[
\#\{\textrm{\small{critical points of $I$ in $\{a\leq I\leq b\}$ with index $q$}}\}\geq\dim(H_q(\{I\leq b\},\{I\leq a\};\Z_2))\qquad\quad \mbox{for any $q\geq0$,}
\]
where we call (Morse) index of $u\in H$ the number of negative eigenvalues (counted with multiplicity) of the selfadjoint operator $d^2 I(u)$.
\end{theorem}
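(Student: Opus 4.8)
The plan is to run the classical deformation argument for the weak Morse inequalities; it works verbatim on a Hilbert manifold, since the two deformation lemmas and the Morse lemma for $C^2$ functionals at nondegenerate critical points are available there (see \cite{Chang}), the $(PS)$-condition on $[a,b]$ being precisely what makes those deformations run. First I would reduce to a finite situation. Each critical point $p$ in $\{a\le I\le b\}$ is nondegenerate, hence isolated by the Morse lemma; and the set of such critical points is compact, because a sequence of them has bounded energy and vanishing gradient, so $(PS)$ extracts a subsequence converging to a point that is again critical and again lies in $\{a\le I\le b\}$ (this set is closed as $a,b$ are regular). A compact discrete set is finite, so there are only finitely many critical values $a<c_1<\cdots<c_N<b$, and I may choose regular values $a=a_0<a_1<\cdots<a_N=b$ with exactly the one critical value $c_i$ inside each $[a_{i-1},a_i]$.

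Next I would analyse the change of topology across a single critical level $c_i$. Let $p_1,\dots,p_r$ be the critical points at that level, with Morse indices $q_1,\dots,q_r$. For $\varepsilon>0$ small the deformation lemmas give a homotopy equivalence of pairs
\[
\big(\{I\le a_i\},\{I\le a_{i-1}\}\big)\;\simeq\;\big(\{I\le c_i+\varepsilon\},\{I\le c_i-\varepsilon\}\big),
\]
and, after a further deformation pushing everything outside small disjoint neighbourhoods $U_j\ni p_j$ down below level $c_i-\varepsilon$, excision identifies the relative homology with the direct sum of the local relative homologies near the $p_j$. On each $U_j$ the Morse lemma puts $I$ in the form $I(p_j)-\|x_-\|^2+\|x_+\|^2$, so the local pair is homotopy equivalent to $(D^{q_j},\partial D^{q_j})$ when $q_j<\infty$ and is contractible when $q_j=\infty$ (the unit sphere of an infinite-dimensional Hilbert space being contractible). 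Summing over $j$,
\[
\dim H_q\big(\{I\le a_i\},\{I\le a_{i-1}\};\Z_2\big)\;=\;\#\{\,j:\ q_j=q\,\}\;=:\;m_q(c_i),
\]
for every $q\ge0$.

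Finally I would glue the local pieces. For any triple $Z\subseteq Y\subseteq X$, the long exact homology sequence yields the subadditivity $\dim H_q(X,Z;\Z_2)\le\dim H_q(X,Y;\Z_2)+\dim H_q(Y,Z;\Z_2)$; iterating it along $\{I\le a_0\}\subseteq\{I\le a_1\}\subseteq\cdots\subseteq\{I\le a_N\}$ and inserting the computation above gives
\[
\dim H_q\big(\{I\le b\},\{I\le a\};\Z_2\big)\;\le\;\sum_{i=1}^{N}m_q(c_i),
\]
and the right-hand side is exactly the number of critical points of index $q$ in $\{a\le I\le b\}$. This is the asserted inequality. (The same bookkeeping with the Poincar\'e polynomials and the factorisation $M_t-P_t=(1+t)Q_t$ of the exact sequences would yield the sharper Morse relations, but only the weak form is claimed.)

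The main obstacle is the middle step: showing that crossing a critical level corresponds to attaching cells of dimension equal to the Morse index. This depends on the deformation lemmas (hence on $(PS)$) and on the Morse lemma being valid for merely $C^2$ functionals in infinite dimensions, and it forces one to treat critical points of infinite index, which turn out to contribute nothing to any finite-degree relative homology group. Granting this local description, the passage to the global bound is the purely formal subadditivity of $q\mapsto\dim H_q$ over exact sequences of triples.
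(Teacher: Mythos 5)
The paper does not prove Theorem~\ref{MorseIneq}; it is quoted as a classical result with a citation to Chang's book (Theorem~4.3 there), so there is no in-text proof to compare your argument against. Your sketch is a correct rendition of the standard deformation proof of the weak Morse inequalities on a Hilbert manifold: finiteness of the critical set from $(PS)$ plus the Morse lemma, identification of the relative homology across each critical level via excision and the local normal form, and then the subadditivity of $q\mapsto\dim H_q$ along the filtration, which follows from exactness of the homology sequence of a triple. The one point worth being a bit more careful about, which you do flag, is the local step: for a merely $C^2$ functional the classical Morse--Palais lemma (which would give a $C^2$-conjugacy to the quadratic normal form) is not directly available, and one must instead use the $C^1$/Lipschitz version of the Morse lemma or, as Chang actually does, the Gromoll--Meyer theory and a pseudo-gradient flow to compute the critical groups $C_q(I,p)\cong H_q(D^{q_j},\partial D^{q_j};\Z_2)$ at a nondegenerate critical point of finite index. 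Granting that, your reduction and bookkeeping are exactly the standard ones, and the remark that infinite-index critical points contribute trivially (since the unit sphere in an infinite-dimensional Hilbert space is contractible) is correct.
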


\

In what follows we collect some well-known definitions and results in
algebraic topology and we refer to \cite{Hatcher} for further details.

\

\noindent\textbf{Wedge sum.} Given spaces $C$ and $D$ with chosen points $c_0\in C$ and $d_0\in D$, then the wedge sum $C\vee D$ is the quotient of the disjoint union $C\amalg D$  obtained by identifying $c_0$ and $d_0$ to a single point.  If $\{c_0\}$ (resp. $\{d_0\}$) is a closed subspace of  $C$ (resp. $D$) and is a deformation retract of some neighborhood in $C$ (resp. $D$), then

\beq\label{formula homology wedge}
 \HTI_q(C\vee D;\Z_2)\cong \HTI_q(C;\Z_2)\bigoplus\HTI_q(D;\Z_2),
  \end{equation}
see \cite[Corollary 2.25]{Hatcher}.\\

\noindent\textbf{Smash Product.} Inside a product space $X\times Y$ there are copies
of $X$ and $Y$, namely $X\times\{y_0\}$ and $\{x_0\}\times Y$ for points
$x_0\in X$ and $y_0\in Y$. These two copies of $X$ and $Y$ in $X\times Y$
intersect only at the point $(x_0,y_0)$, so their union can be identified
with the wedge sum $X\vee Y$. The smash product $X\wedge Y$ is then defined
to be the quotient $X\times Y/X\vee Y$. For the reduced homology of the smash product the following formula holds, \cite[page 276]{Hatcher},

\beq\label{formula homology smash}
\HTI_{q}(C\wedge D;\Z_2)\cong\bigoplus_{i+j=q-1}(\HTI_i(C;\Z_2)\otimes \HTI_j(D;\Z_2)).
\end{equation}

\

\noindent\textbf{Unreduced suspension.} The unreduced suspension (often, as in \cite{Hatcher}, denoted by $SC$) is defined to be
\beq
\Sigma C=(C\times [0,1])/\{(c_1,0)\simeq(c_2,0) \mbox{ and } (c_1,1)\simeq(c_2,1) \mbox{ for all }c_1,c_2\in C \}.
\end{equation}
For the reduced homology of the unreduced suspension the following formula holds, \cite[page 132, ex. 20]{Hatcher},

\beq\label{formula homology suspension}
\HTI_{q+1}(\Sigma C;\Z_2)\cong \HTI_q(C;\Z_2).
\end{equation}
%

\

\noindent\textbf{Join.} The join of two spaces $C$ and $D$ is the
space of all segments joining points in $C$ to points in $D$.
It is denoted by $C\ast D$ and is the identification space
$$
C\ast D= C\times [0,1]\times D/(c,0,d)\sim (c',0,d), (c,1,d)\sim (c,1,d')\qquad\forall\,c,\,c'\in C, \forall\, d,\,d'\in D.
$$
Being $C\ast D\simeq \Sigma(C\vee D)$, \cite[page 20, ex. 24]{Hatcher}, we have that

\beq\label{formula homology join}
\HTI_q(C\ast D;\Z_2)\cong \HTI_q(\Sigma(C\vee D);\Z_2).
\end{equation}

\

At last, we present a recent result obtained in \cite[Theorem 5.19]{AKN} concerning the space of formal barycenters on a disjoint union of spaces.

\begin{proposition}\label{prop:AKN}
For $C$, $D$ two disjoint connected spaces and $k\geq2$, $Bar_k(C\amalg D)$ has the homology of

\[
\begin{split}
&Bar_k(C)\vee \Sigma Bar_{k-1}(C)\vee Bar_k(D) \vee\Sigma Bar_{k-1}(D)\vee\\
&\vee \bigvee_{\ell=1}^{k-1}(Bar_{k-\ell}(C)\ast Bar_{\ell}(D))\vee \bigvee_{\ell=2}^{k-1}(\Sigma Bar_{k-\ell}(C))\ast Bar_{\ell-1}(D).
\end{split}
\]
\end{proposition}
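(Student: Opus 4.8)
\textbf{Proof proposal for Proposition \ref{prop:AKN}.}

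The plan is to reduce the statement to results already available in the literature on barycenter spaces over connected bases, by a careful analysis of the natural stratification of $Bar_k(C \amalg D)$ according to how the total mass $1$ is split between $C$ and $D$. Concretely, for a measure $\mu = \sum t_i \delta_{x_i} \in Bar_k(C\amalg D)$, write $\mu = \mu_C + \mu_D$ where $\mu_C$ is the restriction to $C$ and $\mu_D$ the restriction to $D$, and let $s = \mu_C(C) \in [0,1]$ be the mass on $C$. If $\mu_C$ is supported on $j$ points and $\mu_D$ on $\ell$ points then $j + \ell \le k$. The idea is that the subset where $s \in (0,1)$ fibers, via $(\mu_C, \mu_D, s) \mapsto (s^{-1}\mu_C, (1-s)^{-1}\mu_D, s)$, over a join-type construction $Bar_{k-\ell}(C) \ast Bar_\ell(D)$ (with $\ell$ the number of atoms on $D$), while the two ends $s=0$ and $s=1$ collapse to $Bar_k(D)$ and $Bar_k(C)$ respectively; the appearance of the suspension terms $\Sigma Bar_{k-1}(C)$, $\Sigma Bar_{k-1}(D)$ and $\Sigma Bar_{k-\ell}(C) \ast Bar_{\ell-1}(D)$ is the homological signature of the degenerations that occur when an atom crosses from positive to zero mass, i.e. when the stratum of ``$j$ atoms on $C$, $\ell$ on $D$'' meets the lower stratum ``$j-1$ atoms on $C$''.

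The concrete steps I would carry out are: (i) set up the stratification of $Bar_k(C\amalg D)$ by the pair $(j,\ell)$ of numbers of atoms on $C$ and $D$, and identify the closures of strata; (ii) build a filtration of the space by $j+\ell$ (or by $s$) and write down the associated long exact sequences / spectral sequence; (iii) identify each successive quotient with a (reduced suspension of a) join of barycenter spaces over $C$ alone and over $D$ alone, using that $C$ and $D$ are connected so that $Bar_r(C)$, $Bar_r(D)$ are well-understood and the relevant pairs are good pairs (the inclusions $Bar_{r-1} \hookrightarrow Bar_r$ being cofibrations, cf. the hypotheses needed for \eqref{formula homology wedge}); (iv) assemble the pieces using the homology formulas \eqref{formula homology wedge}, \eqref{formula homology smash}, \eqref{formula homology suspension}, \eqref{formula homology join} recalled above, checking that all the connecting maps in the exact sequences vanish with $\Z_2$ coefficients so that the homology splits as the direct sum claimed. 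Since this is \cite[Theorem 5.19]{AKN}, in the paper itself one need only cite it; what I sketch here is the shape of the argument behind it.

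The main obstacle is step (iii)–(iv): showing that the spectral sequence of the filtration degenerates, i.e. that there are no nontrivial extensions or differentials, so that $H_*(Bar_k(C\amalg D);\Z_2)$ is genuinely the direct sum of the homologies of the listed wedge summands rather than merely admitting a filtration with those graded pieces. Over $\Z_2$ this is plausible because all the pieces are suspensions or joins (hence themselves suspensions up to homotopy) and the attaching data tends to be null-homotopic after one suspension, but verifying this requires care with the combinatorics of how the strata fit together — in particular tracking the ``extra'' atom that can sit with zero limiting mass, which is exactly what produces the $\Sigma Bar_{k-\ell}(C)\ast Bar_{\ell-1}(D)$ and $\Sigma Bar_{k-1}(C)$, $\Sigma Bar_{k-1}(D)$ terms. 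For the purposes of this paper, however, we simply invoke \cite[Theorem 5.19]{AKN}.
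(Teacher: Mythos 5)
Your proposal and the paper take exactly the same route on this item: the paper does not prove Proposition \ref{prop:AKN} but simply recalls it from the literature with a citation to \cite[Theorem 5.19]{AKN}, and you correctly identify that this is the intended move and defer to the same reference. The heuristic sketch you add (stratification by the split of mass between $C$ and $D$, join-type fibers, suspensions arising from the degenerations at $s=0,1$) is a plausible account of what lies behind the cited theorem, but since neither you nor the paper carries it out, there is nothing further to compare.
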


\section{Compactness of solutions}\label{sectcompac}
\setcounter{equation}{0}
In this section we present the proof of Theorem~\ref{thmcompa}, a compactness result for solutions $u_n$ of the general problem \eqref{equationbis}.

As commented previously, most of the results in this direction consider only the case of positive $K$, like for instance \cite{breme,lisha} for the regular case and \cite{barmon, bt} for the singular one. In order to prove Theorem \ref{thmcompa}, and following \cite{ChenLiAnn}, we will first derive an a priori estimate in the region in which $K$ is negative, and later we will give such estimates in the nodal region of $K$.

\begin{proposition} \label{propcompa1} Given $\delta>0$, there exists $C>0$ such that $ u_n(x) \leq C$ for all $x \in \Sigma^- \setminus \Gamma^\delta$, $n \in \N$. \end{proposition}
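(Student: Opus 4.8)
The plan is to derive an a priori upper bound for $u_n$ on the set $\Sigma^-\setminus\Gamma^\delta$, where by \ref{H1} the potential satisfies $\tilde K_n\le -c_\delta<0$ for some $c_\delta>0$ and all large $n$. The key difficulty, which the authors emphasize in the introduction, is that $u_n$ is not assumed to be positive or bounded below, so one cannot directly run the usual arguments of \cite{ChenLiMPMS} or \cite{bt, barmon}. My first step would therefore be to control the negative part $u_n^-=\max\{-u_n,0\}$. Using the Kato inequality $-\Delta_g u_n^-\le (\Delta_g u_n)\,\mathbf{1}_{\{u_n<0\}}=(f_n-\tilde K_n e^{u_n})\,\mathbf{1}_{\{u_n<0\}}\le |f_n|$ (since on $\{u_n<0\}$ we have $\tilde K_n e^{u_n}\ge \tilde K_n$, and $f_n$ is uniformly bounded in $C^{0,\alpha}$ hence in $L^\infty$, so the right-hand side is bounded by a constant independent of $n$), I would test against $u_n^-$ to get $\int_\Sigma|\nabla u_n^-|^2\le C\int_\Sigma u_n^-$. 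Here one needs a uniform bound on $\int_\Sigma u_n$, i.e. on $\int_\Sigma\tilde K_n e^{u_n}$: integrating \eqref{equationbis} gives $\int_\Sigma \tilde K_n e^{u_n}=\int_\Sigma f_n$, which is uniformly bounded; then the Jensen/Moser–Trudinger machinery together with the $0$-mean normalization (or directly the structure of the functional) bounds $\int_\Sigma u_n^-$, and hence $\|u_n^-\|_{H^1}\le C$.

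Once $\|u_n^-\|_{H^1(\Sigma)}$ is under control, I would upgrade this to an $L^p$ bound on $u_n^-$ for every $p<\infty$, uniformly in $n$, via the Moser–Trudinger inequality applied to $u_n^-$ (its gradient is bounded in $L^2$), and consequently $e^{u_n}=e^{u_n^+}e^{-u_n^-}$ has a uniform $L^1$ bound on any region; more usefully, on $\Sigma^-\setminus\Gamma^\delta$ the term $\tilde K_n e^{u_n}$ is nonpositive. The heart of the argument is then a local elliptic estimate. Fix $x_0\in\Sigma^-\setminus\Gamma^\delta$; on a small geodesic ball $B=B_{x_0}(\rho)\subset\Sigma^-\setminus\Gamma^{\delta/2}$ write $-\Delta_g u_n=\tilde K_n e^{u_n}-f_n\le -f_n\le C$, so $u_n$ is a subsolution of $-\Delta_g u_n\le C$. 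Splitting $u_n=w_n+\phi_n$ where $-\Delta_g\phi_n=C$ on $B$ with $\phi_n=0$ on $\partial B$ (so $\phi_n$ is bounded uniformly) and $w_n$ is subharmonic, the mean value / local boundedness property of subharmonic functions gives $\sup_{B_{x_0}(\rho/2)} w_n\le C\int_B (w_n)^+\le C\int_B u_n^+ + C\le C\big(\|u_n^-\|_{L^1}+\|u_n\|_{L^1}\big)+C$, all uniformly bounded by the previous step. This yields the desired pointwise bound $u_n(x_0)\le C$ with $C$ independent of $n$ and of $x_0\in\Sigma^-\setminus\Gamma^\delta$ (by a covering argument, using compactness of $\Sigma^-\setminus\Gamma^\delta$).

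The main obstacle I anticipate is not the local elliptic estimate itself, which is standard once one has an $L^1$ control of $u_n^+$ (equivalently of $|u_n|$), but rather establishing that uniform $L^1$ bound on $u_n$ without any sign information. The chain is: $\int_\Sigma\tilde K_n e^{u_n}=\int_\Sigma f_n$ bounded $\Rightarrow$ bound on $\int_\Sigma u_n^-$ via Moser–Trudinger $\Rightarrow$ $\|u_n^-\|_{H^1}$ bounded via Kato $\Rightarrow$ $\|u_n^-\|_{L^p}$ bounded for all $p$ $\Rightarrow$ (together with a bound on $\int_{\Sigma^+}e^{u_n}$, which follows since $\int_{\Sigma^+}\tilde K_n e^{u_n}=\int_\Sigma f_n-\int_{\Sigma^-}\tilde K_n e^{u_n}\le C$ because $\tilde K_n e^{u_n}\le 0$ on $\Sigma^-$) a bound on $\int_\Sigma u_n^+$. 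One must be careful that the Kato inequality step genuinely needs only the lower bound $\tilde K_n e^{u_n}\ge \tilde K_n\ge -\|\tilde K_n\|_\infty$ valid on all of $\Sigma$, so it does not require any localization; this is precisely why the negative part can be controlled globally even though $u_n^+$ cannot. I would present the Kato-inequality estimate on $u_n^-$ as a separate lemma, since (as the introduction notes) it will also be reused later for the moving-plane comparison near $\Gamma$.
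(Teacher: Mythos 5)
Your high-level outline identifies the right ingredients — Kato inequality for the negative part, a local subharmonic mean-value estimate, and some control of the exponential in $\Sigma^-$ — but the implementation has several genuine gaps, the most serious of which is a normalization problem.

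\textbf{The normalization gap.} Equation \eqref{equationbis} does not carry a zero-mean constraint, and indeed alternative (2) of Theorem \ref{thmcompa} explicitly allows $u_n\to -\infty$ uniformly; so $\int_\Sigma u_n^-$ (in your convention $u_n^-\ge 0$) is \emph{not} bounded a priori, and the chain ``$\int_\Sigma \tilde K_n e^{u_n}$ bounded $\Rightarrow$ bound on $\int_\Sigma u_n^-$'' does not hold (shift $u_n$ by a large negative constant: the left side goes to zero while the right side diverges). Consequently, testing the Kato inequality against $u_n^-$ to get $\int|\nabla u_n^-|^2 \le C\int u_n^-$ goes nowhere, and the final local bound in terms of $\|u_n\|_{L^1}$ or $\int_B u_n^+$ is in terms of an uncontrolled quantity. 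The paper's Lemma \ref{kato} is precisely designed to sidestep this: it works with the shift-invariant quantity $v_n=u_n^- - \fint_\Sigma u_n^-$, and instead of an energy estimate it uses the Green's representation. The Radon measure $\mu_n=-\Delta_g u_n^-$ has zero total mass and a controlled negative part $\mu_n\ge -Ch$, so $\int_\Sigma d|\mu_n|\le C$, and then Green's function bounds give $\|v_n\|_{L^p}\le C$ and a uniform lower bound on $v_n$. You should not attempt to bound $\int u_n^-$ globally.

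\textbf{The singular-weight gap.} You assert that the Kato right-hand side is bounded by $|f_n|$, invoking $\tilde K_n e^{u_n}\ge \tilde K_n\ge -\|\tilde K_n\|_\infty$. But $\tilde K_n = K_n e^{-h_m}$ blows up like $d(x,p_j)^{2\alpha_j}$ near any $p_j$ with $\alpha_j\in(-1,0)$, so $\|\tilde K_n\|_\infty=+\infty$ in general. The paper correctly replaces the constant bound by the weight $h(x)=1+\sum_{j\ge\ell+1,\,\alpha_j<0}d(x,p_j)^{2\alpha_j}$, which is only in $L^q$ for $q$ slightly above $1$ — enough, but it changes the subsequent regularity bookkeeping. (Your inequality $\tilde K_ne^{u_n}\ge\tilde K_n$ on $\{u_n<0\}$ is also only valid where $\tilde K_n\le 0$; where $\tilde K_n\ge 0$ the sign of the exponential cuts the other way, though there it is trivially $\ge 0$.)

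\textbf{The missing integral estimate.} The final step of the paper's proof of Proposition \ref{propcompa1} relies crucially on Lemma \ref{intestimate}, the two-sided bound $\bigl|\int_{\Sigma_0}\tilde K_n e^{u_n}\bigr|\le C$ for any $\Sigma_0\Subset\Sigma^\pm$. Your argument only invokes the global identity $\int_\Sigma \tilde K_ne^{u_n}=\int_\Sigma f_n$ and the sign of $\tilde K_n e^{u_n}$ on $\Sigma^-$, which gives a one-sided bound on $\int_{\Sigma^-}\tilde K_n e^{u_n}$ (from above, since it is $\le 0$) but no lower bound — yet the lower bound is exactly what is needed to control $\int_B e^{u_n}$ on $B\Subset\Sigma^-$. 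The paper obtains the two-sided bound by testing the equation against $\varphi^2$ (first eigenfunction squared) and controlling both $\int u_n^- f$ (via Lemma \ref{kato} and $\int f=0$) and $\int u_n^+|f|$ (via Young and $(t^+)^{1/\gamma}\le C+e^t$). You then close the local estimate by combining the subharmonic mean value inequality for $u_n^+$ with the elementary inequality $u_n^+/p\le e^{u_n/p}$, H\"older, and the integral estimate — rather than trying to bound $\int_B u_n^+$ by $\|u_n\|_{L^1}$. Without Lemma \ref{intestimate} or an equivalent, your proof does not close.
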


\begin{proposition} \label{propcompa2} There exist $\varepsilon, C>0$, such that $ u_n(x) \leq C$ for all $n \in \N$ and $x \in \Gamma^\varepsilon$. \end{proposition}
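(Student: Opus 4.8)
The plan is to use the moving plane method (in the spirit of \cite{ChenLiAnn, ChenLiMPMS}) to compare the values of $u_n$ on the two sides of the nodal line $\Gamma$, combined with the a priori bound from Proposition \ref{propcompa1} on the negative side and an integral estimate controlling the total mass $\int_{\Sigma}\tilde K_n e^{u_n}$. First I would fix a point $q\in\Gamma$ and work in suitable local conformal (isothermal) coordinates centered at $q$ in which $\Gamma$ becomes a straight segment through the origin and the metric is conformally Euclidean; since \ref{H1} guarantees $\nabla K(q)\neq 0$ the nodal line is a smooth hypersurface, and after a rotation one may assume $K<0$ on one side of the coordinate hyperplane and $K>0$ on the other, at least in a fixed ball $B_{2r}$. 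By \ref{H2} the singular points $p_j$ are bounded away from $\Gamma$, so in $B_{2r}$ the weight $e^{-h_m}$ in $\tilde K_n$ is smooth and bounded above and below; hence the sign and nondegeneracy of $\tilde K_n$ at the nodal line are inherited from $K_n$, which converges to $K$ in $C^{2,\alpha}$.

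The key preliminary step, which I expect to be the technical heart of the argument, is a uniform \emph{integral} (or Harnack-type) estimate near $\Gamma$: namely that $\int_{B_{2r}}\tilde K_n^+ e^{u_n}$ stays bounded and, more importantly, that the negative part $u_n^-$ of $u_n$ is uniformly bounded in $L^1$, or even pointwise, in a neighborhood of $\Gamma$. Here I would follow the strategy indicated in the introduction: estimate $u_n^-$ via the Kato inequality. Writing the equation as $-\Delta_g u_n = \tilde K_n e^{u_n} - f_n$ and noting $\tilde K_n e^{u_n}\le C$ on $\Sigma^-\setminus\Gamma^\delta$ from Proposition \ref{propcompa1} while $\tilde K_n e^{u_n}\ge 0$ where $K_n\le 0$, one gets $\Delta_g u_n^- \le f_n + (\tilde K_n e^{u_n})^- \le C$ away from $\Sigma^+$, so $u_n^-$ is subharmonic up to a bounded term on the negative side; together with the $L^1$ bound on $u_n$ coming from the normalization $\int_\Sigma u_n\,dV_g=0$ (or, in the form \eqref{equationbis}, from integrating the equation) this yields a uniform upper bound for $u_n^-$, hence a uniform \emph{lower} bound for $u_n$, in a fixed neighborhood of $\Gamma$. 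This lower bound is exactly what makes the moving plane scheme applicable: without it the reflected function could not be compared.

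With $u_n$ bounded below near $\Gamma$ and bounded above on the $K<0$ side (Proposition \ref{propcompa1}), I would run the moving plane method on the half-ball lying in $\{K_n>0\}$: define $u_n^\sigma(x)$ to be $u_n$ reflected across the plane at distance $\sigma$ from the nodal plane, and consider $w_n^\sigma = u_n^\sigma - u_n$ on the reflected cap. Using that $e^{u_n^\sigma}-e^{u_n}= e^{\xi}w_n^\sigma$ and that $\tilde K_n$ is negative in the reflected region while the a priori bounds control the zeroth-order coefficient and the boundary data, one shows by the usual two-step procedure (starting the planes from the boundary of $B_{2r}$ where $w_n^\sigma\ge 0$ by the $C^0$ bound on the $K<0$ side together with the lower bound near $\Gamma$, then sliding) that $w_n^\sigma\ge 0$ up to $\sigma=0$; this gives a one-sided bound $u_n(x)\le u_n(x^*) + C$ relating a point $x$ on the $K>0$ side to its mirror image $x^*$ on the $K<0$ side, where $u_n$ is bounded by Proposition \ref{propcompa1}. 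Combined with the integral estimate this rules out concentration on $\Gamma$ and yields $u_n\le C$ in a uniform neighborhood $\Gamma^\varepsilon$; a standard covering of the compact curve $\Gamma$ by finitely many such balls finishes the proof. The main obstacle is making the moving plane argument uniform in $n$ in a genuinely variable geometry — the metric is only conformally Euclidean, $\tilde K_n$ degenerates exactly on $\Gamma$, and the lower bound on $u_n$ is the linchpin; I would handle the degeneracy by localizing to a thin one-sided neighborhood where $\tilde K_n \le -c\,\dist(\cdot,\Gamma)$ and absorbing the conformal factor and the lower-order term $f_n$ into harmless bounded perturbations of the comparison operator.
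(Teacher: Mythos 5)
Your overall strategy — isothermal coordinates near $\Gamma$, a Kato-inequality bound on $u_n^-$, the integral estimate, and a moving plane comparison across the nodal line — does match the paper's. However, the closing step you describe has a concrete gap in the direction of the inequality. Near $\Gamma$, after the reduction to $-\Delta u = W e^{u}$ with $\partial_{x_1}W\le -\beta/2$, the moving plane argument applied to the auxiliary function $v = u - w + C_0(\varepsilon+\gamma(x_2)-x_1)$ yields monotonicity of $v$ (hence of $u$ up to a bounded term) \emph{decreasing} in $x_1$, i.e.\ $u(x)\le u(y)+C$ for $y$ in a cone $\Delta_x$ pointing \emph{into} $\Sigma^+$. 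That is the opposite of what you claim ($u_n(x)\le u_n(x^*)+C$ with $x^*$ on the $K<0$ side), and it means one cannot close by invoking Proposition~\ref{propcompa1} on the negative side. Moreover, even if the direction were reversible, the mirror image $x^*$ of a point near $\Gamma$ would again lie in $\Gamma^\delta$, outside the region $\Sigma^-\setminus\Gamma^\delta$ covered by Proposition~\ref{propcompa1}, and the monotonicity $\partial_{x_1}W\le-\beta/2$ only holds in a small neighborhood of $\Gamma$, so sweeping the plane from $\partial B_{2r}\cap\{K<0\}$ all the way to $\Gamma$ is not justified. The paper's actual closing is different: the cone $\Delta_x$ intersected with $\Sigma^+\setminus\Gamma^\eta$ has measure bounded below and $\tilde K$ bounded below there, so one writes $e^{u(x)} = \fint_{\Sigma_x} e^{u(x)} \le C\fint_{\Sigma_x}e^{u(y)} \le C\fint_{\Sigma_x}\tilde K e^{u(y)} \le C$ by Lemma~\ref{intestimate}. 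This local cone-plus-integral-estimate device is what your sketch is missing.

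Two smaller points of divergence from the actual proof. First, one cannot in general choose isothermal coordinates that simultaneously straighten a $C^{2,\alpha}$ curve; the paper keeps $\Gamma$ curved but applies a Kelvin transform to make it strictly convex, which is what allows the moving planes to start at $x_1=\varepsilon$. Second, the moving planes are not applied to $u_n$ directly: after subtracting a regular correction $u_{0,n}$ to absorb $f_n$, an auxiliary function $v = u - w + C_0(\varepsilon+\gamma(x_2)-x_1)$ is introduced, where $w$ is the harmonic extension of boundary data constructed from $u$ on $\partial_l\Omega_\varepsilon$; the constant $C_0$ is chosen, using the oscillation bound \eqref{oscillation} and the gradient bound \eqref{derivk} (both consequences of Lemma~\ref{kato} and Proposition~\ref{propcompa1}), so that $v\ge 0$ in $\Omega_\varepsilon$ with $v=0$ on $\partial_l\Omega_\varepsilon$. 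You correctly identify the role of the Kato-derived lower bound, but the precise auxiliary function, its boundary condition, and the control on $C_0$ are exactly what make the moving plane argument uniform in $n$, so they need to be made explicit.
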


The proof of Theorem \ref{thmcompa} will be finally accomplished by studying the possible blow-up of the sequence $u_n$ in $\Sigma^+\setminus \Gamma^{\varepsilon}$.

\bigskip One of the difficulties in our study is that we do not know a priori whether the term

\beq \label{absvalue} \int_{\Sigma} |\tilde{K}_n| e^{u_n}dV_g \end{equation}
is bounded or not. By standard regularity results, this would give a priori $W^{1,p}$ estimates ($p \in (1,2)$) on $u_n$. Instead, if we integrate \eqref{equationbis} we only obtain that $\int_{\Sigma} \tilde{K}_n e^{u_n}$ is bounded.

Our first lemma shows that such kind of estimate is indeed possible for $u_n^{-}= \min\{ u_n,0\}$. This fact will be useful to prove both Propositions \ref{propcompa1} and \ref{propcompa2}.

\begin{lemma}\label{kato} Under the conditions of Theorem \ref{thmcompa}, define $v_n=u_n^{-} - \fint_{\Sigma} u_n^-$. Then there exists $C>0$ such that

\begin{enumerate}
\item[a)] $\|v_n\|_{L^p} \leq C$ for any $p \in [1,+\infty)$;
\item[b)] $v_n(x) \geq -C$ for any $x \in \Sigma$.
\end{enumerate}

\end{lemma}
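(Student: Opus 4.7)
Set $w_n := -u_n^- = \max(-u_n,0) \geq 0$, so that $v_n = \fint_\Sigma w_n - w_n$. In these terms, (b) amounts to the pointwise upper bound $w_n(x) \leq \fint_\Sigma w_n + C$ and (a) to a uniform $L^p$ bound on $w_n - \fint_\Sigma w_n$.

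The starting point is Kato's inequality applied to $-u_n$: using \eqref{equationbis} and the fact that $0 < e^{u_n} \leq 1$ on $\{u_n<0\}$, one obtains distributionally on $\Sigma$
\[
-\Delta_g w_n \;\leq\; \chi_{\{u_n<0\}}\bigl(f_n - \tilde K_n e^{u_n}\bigr) \;\leq\; |f_n| + |\tilde K_n| \;=:\; g_n.
\]
Since $f_n \to f$ in $C^{0,\alpha}$ and, in view of \eqref{accame}--\eqref{tildeK}, $|\tilde K_n|$ has at worst singularities of the form $d(x,p_j)^{2\alpha_j}$ with $\alpha_j > -1$, the sequence $g_n$ is uniformly bounded in $L^p(\Sigma)$ for some $p > 1$. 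Note that this step is precisely where the sign-changing character of $K_n$ is bypassed: a direct integration of \eqref{equationbis} gives no control on $\int_\Sigma|\tilde K_n|e^{u_n}$, whereas Kato's inequality replaces $\tilde K_n e^{u_n}$ by the integrable quantity $|\tilde K_n|$.

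Next I would split $w_n = \psi_n + \tilde w_n$, where $\psi_n$ is the zero-mean solution of $-\Delta_g \psi_n = g_n - \fint_\Sigma g_n$. By Calder\'on-Zygmund together with the two-dimensional embedding $W^{2,p} \hookrightarrow L^\infty$ ($p > 1$), one has $\|\psi_n\|_{L^\infty(\Sigma)} \leq C$. The remainder satisfies $-\Delta_g \tilde w_n \leq \fint_\Sigma g_n \leq C$ distributionally; combined with $\int_\Sigma \Delta_g \tilde w_n\,dV_g = 0$ this forces $(-\Delta_g \tilde w_n)^+\in L^\infty$ uniformly and hence $\|\Delta_g \tilde w_n\|_{L^1(\Sigma)} \leq C$.

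To conclude I would use the Green representation
\[
\tilde w_n(x) - \fint_\Sigma \tilde w_n \;=\; \int_\Sigma G(x,y)\,(-\Delta_g \tilde w_n)(y)\,dV_g(y),
\]
with $G(x,y) = -\frac{1}{2\pi}\log d(x,y) + H(x,y)$ and $H$ bounded, so that $G$ is bounded from below on $\Sigma\times\Sigma$ and $\|G(x,\cdot)\|_{L^q(\Sigma)} \leq C_q$ uniformly in $x$ for every $q<\infty$. Writing $-\Delta_g \tilde w_n = F^+ - F^-$, the contribution $\int G\,F^+\,dV_g$ is controlled from above by $\|F^+\|_\infty\|G(x,\cdot)\|_{L^1} \leq C$, while $-\int G\, F^-\, dV_g \leq -(\inf G)\|F^-\|_{L^1}\leq C$. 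Hence $\tilde w_n - \fint_\Sigma \tilde w_n \leq C$ pointwise, and adding $\psi_n - \fint_\Sigma \psi_n$ yields (b). For (a), the same representation together with $\|\Delta_g\tilde w_n\|_{L^1}\leq C$ and Minkowski's integral inequality gives
\[
\Bigl\|\tilde w_n - \fint_\Sigma \tilde w_n\Bigr\|_{L^p(\Sigma)} \;\leq\; \sup_y \|G(\cdot,y)\|_{L^p}\,\|\Delta_g \tilde w_n\|_{L^1}\;\leq\; C_p
\]
for every $p < \infty$, and combining with $\|\psi_n\|_\infty \leq C$ concludes (a). The one genuinely delicate point is obtaining a one-sided estimate with an integrable right-hand side despite the sign change of $K_n$ and the negative values of $u_n$; once Kato's inequality is in hand, the rest is standard linear elliptic theory.
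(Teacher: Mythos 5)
Your proof is correct and follows essentially the same route as the paper's: Kato's inequality applied to the negative part of $u_n$, the key observation that $e^{u_n}\le 1$ on $\{u_n<0\}$ yields an integrable one-sided bound (in $L^{1+\delta}$ for small $\delta>0$, since $\alpha_j>-1$), and then mass-zero combined with the Green representation. The only differences are cosmetic: you peel off the regular $W^{2,p}$ piece $\psi_n$ before invoking the Green representation and then use Minkowski's integral inequality, whereas the paper works directly with the signed measure $\mu_n=-\Delta_g u_n^-$ bounded below by $-Ch$ with $h\in L^{1+\delta}$, and estimates $|v_n(x)|^p$ by H\"older against $d|\mu_n|$.
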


\begin{proof}

We apply the well-known Kato inequality to the operator $\Delta_g$ (see for instance \cite[Theorem 5.1]{Ancona})

\beq \label{katito}
- \Delta_g u_n^{-} \geq  (-\Delta_g u_n)  \chi_{\{u_n \leq 0\}}  =\left (\tilde K_n e^{u_n} - f_n  \right) \chi_{\{u_n \leq 0\} } \geq -C h(x),
\end{equation}
where 

\beq \label{acha}
h(x) = 1 + \sum_{\substack{j \geq \ell+1 \\ \alpha_j <0} } d(x, p_j)^{2 \alpha_j}.
\eeq
Observe that $h \in L^q(\Sigma)$ for $q \in [1, 1+\delta)$ if $\delta >0 $ is sufficiently small.

Since the Radon measures $\mu_n = - \Delta_g u_n^{-} \geq -C h(x)$ are given as a divergence (in the sense of distributions), then $\int_{\Sigma} \, d\, \mu_n = 0$. From that we conclude that
$\int_{\Sigma} \, d\, |\mu_n |$ is bounded. We use the Green's representation for $v_n$ and H\"{o}lder inequality to obtain:

$$ |v_n(x)|^p \leq   \left ( \int_{\Sigma} \, d\, |\mu_n | \right )^{p-1} \int_{\Sigma} |G(x,y)|^p  d \, |\mu_n(y)| .$$

We now integrate in $x$ and make use of Fubini Theorem, taking into account that the Green function of $\Delta_g$ in $\Sigma$ belongs to $L^p$:

\begin{align*} \|v_n\|_{L^p}^p \leq  \left ( \int_{\Sigma} \, d\, |\mu_n | \right )^{p-1}
  \int_{\Sigma} \int_{\Sigma} |G(x,y)|^p  d \, |\mu_n(y)|   \, dx \\ =  \left ( \int_{\Sigma} \, d\, |\mu_n | \right )^{p-1}
  \int_{\Sigma} \int_{\Sigma} |G(x,y)|^p \, dx \ d \, |\mu_n(y)|  \leq C  \left ( \int_{\Sigma} \, d\, |\mu_n | \right )^{p}.\end{align*}

This concludes the proof of a). 

For the proof of b), we write the Green function of $\Delta_g$ in $\Sigma$ as $G(x,y)= -\frac{1}{2\pi} \log ( r d(x,y)) + \tilde H(x,y)$, where $\tilde H: \Sigma \times \Sigma \to \R$ is a bounded function. Here we have chosen $r \in (0, diam(\Sigma)^{-1})$. Then,

\begin{align*}
v_n(x)= \int_{\Sigma} G(x,y) d \, \mu_n(y) = -\frac{1}{2\pi} \int_{\Sigma} \log ( r d(x,y)) d \, \mu_n^+(y) \\ -\frac{1}{2\pi} \int_{\Sigma} \log ( r d(x,y)) d \, \mu_n^-(y) + \int_{\Sigma} \tilde H(x,y) d \, \mu_n(y).
\end{align*}

By the choice of $r>0$,

$$ -\frac{1}{2\pi} \int_{\Sigma} \log ( r d(x,y)) d \, \mu_n^+(y) \geq 0.$$

Moreover, by \eqref{katito},
$$ -\frac{1}{2\pi} \int_{\Sigma} \log ( r d(x,y)) d \, \mu_n^-(y) \geq - C \frac{1}{2\pi} \int_{\Sigma} \log ( r d(x,y)) h(y))\, d y \geq -C,
$$
and finally
$$ \left | \int_{\Sigma} \tilde H(x,y) d \, \mu_n(y) \right | \leq \|\tilde H\|_{L^{\infty}} \int_{\S} d |\mu_n| \leq C.$$
\end{proof}

As a first consequence of Lemma \ref{kato}, we present an integral estimate in domains entirely contained in the positive or negative region.
This result is an extension of the Chen-Li integral estimate for positive solutions, see \cite{ChenLiMPMS}. In our case $u_n$ may change sign, but we can perform the estimate thanks to Lemma \ref{kato}.

\begin{lemma}\label{intestimate}
Under the conditions of Theorem \ref{thmcompa}, for every open subdomain $\Sigma_0$ completely contained in $\Sigma^+$ or $\Sigma^-$, there exists $C>0$ so that

$$
\left | \displaystyle  \int_{\Sigma_0} \tilde{K}_n e^{u_n} dV_g\right | \leq C .
$$
\end{lemma}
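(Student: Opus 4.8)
The plan is to estimate $\int_{\Sigma_0}|\tilde K_n|e^{u_n}$ by cutting the integral into the region where $u_n\le 0$ and the region where $u_n>0$, and to localise the PDE using a cutoff function supported in a slightly larger domain $\Sigma_1$ still contained in $\Sigma^+$ (resp. $\Sigma^-$). On $\{u_n\le 0\}\cap\Sigma_0$ there is nothing to do: since $\tilde K_n\to\tilde K$ in $C^{0,\alpha}$ the potentials are uniformly bounded, and $e^{u_n}\le 1$ there, so that part of the integral is bounded by $C|\Sigma_0|$. Hence the whole point is to control $\int_{\Sigma_0\cap\{u_n>0\}}|\tilde K_n|e^{u_n}$, and on that set we may freely use $u_n=u_n^+$ and the splitting $u_n = u_n^+ + u_n^-$ with $u_n^-$ handled by Lemma~\ref{kato}.

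Next I would fix an intermediate open set $\Sigma_0\Subset\Sigma_1\Subset\Sigma^+$ (the $\Sigma^-$ case is identical, with the sign of $\tilde K$ reversed so that one works with $-\tilde K_n>0$ on $\Sigma_1$) and a cutoff $\eta\in C_c^\infty(\Sigma_1)$ with $\eta\equiv 1$ on $\Sigma_0$, $0\le\eta\le 1$. Write $u_n = w_n + z_n$ where $-\Delta_g w_n = \tilde K_n e^{u_n}\chi_{\Sigma_1}$ (a nonnegative right-hand side on $\Sigma_1$, since $\tilde K_n>0$ there for $n$ large) with $\int_\Sigma w_n\,dV_g=0$, and $z_n$ absorbs $-f_n$, the mean value, and the harmless harmonic correction; the key point is that $z_n$ is uniformly bounded in $\Sigma_1$ by elliptic estimates, because $\Delta_g z_n$ is bounded in $C^{0,\alpha}$ (recall $f_n\to f$ in $C^{0,\alpha}$) away from the conical points $p_j$, and by \ref{H2} all the $p_j$ lying in $\Sigma^+$ with $\alpha_j$ possibly negative — actually only $\alpha_j<0$ ones matter — contribute terms of the form $d(x,p_j)^{2\alpha_j}$ which are locally integrable; one invokes the same Green's representation as in Lemma~\ref{kato}. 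So it suffices to bound $\int_{\Sigma_0}\tilde K_n e^{w_n}$. Now the classical Brezis–Merle / Chen–Li argument applies to $w_n$: from $\int_\Sigma \tilde K_n e^{u_n}\,dV_g$ bounded (integrate \eqref{equationbis}) together with $e^{u_n}=e^{z_n}e^{w_n}$ and $z_n$ bounded below on $\Sigma_1$, one gets $\int_{\Sigma_1}\tilde K_n e^{w_n}\le C$, hence $\|\Delta_g w_n\|_{L^1(\Sigma_1)}\le C$; the Brezis–Merle inequality then yields $\int_{\Sigma_1'} e^{p|w_n|}\le C$ for some $p>1$ on a still smaller $\Sigma_1'\supset\Sigma_0$, and since $z_n$ is bounded above on $\Sigma_1$ this gives $\int_{\Sigma_0} e^{u_n}\le\int_{\Sigma_0}e^{z_n}e^{w_n}\le C$, whence $\int_{\Sigma_0}|\tilde K_n|e^{u_n}\le C$.

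There is, however, a subtlety that is really the heart of the matter and which is why Lemma~\ref{kato} is invoked: the Brezis–Merle estimate requires control of $\int|\Delta_g w_n|$, i.e. of $\int_{\Sigma_1}\tilde K_n e^{u_n}$ with the \emph{true} sign, and for that one needs a lower bound on $u_n$ on $\Sigma_1$ — precisely what is not available a priori. This is where part b) of Lemma~\ref{kato} enters: $u_n^- = v_n + \fint_\Sigma u_n^-$ with $v_n\ge -C$, so $u_n \ge \fint_\Sigma u_n^- - C$ on $\Sigma$. If $\fint_\Sigma u_n^-$ stays bounded we are done as above. If $\fint_\Sigma u_n^-\to-\infty$ then $e^{u_n}\to 0$ in $L^1$ by part a) (indeed $e^{u_n}\le e^{u_n^+}$ and ... more carefully, one uses that $u_n\le u_n^+ = u_n - u_n^-$ and Jensen to see $\int e^{u_n^+}$ is controlled once $\int\tilde K_n e^{u_n}$ is bounded on $\Sigma^+$), so the conclusion is trivial. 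Thus the argument splits into the case where the negative part of $u_n$ has bounded average — reducing to the standard Chen–Li estimate via the decomposition $u_n=w_n+z_n$ and Brezis–Merle — and the degenerate case where it does not, which is immediate.

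I expect the main obstacle to be the bookkeeping around the conical singularities: one must ensure that when $p_j\in\Sigma^+$ (those with $j\le\ell$) the potentially non-integrable-to-high-power weight $e^{-h_m}$ behaving like $d(x,p_j)^{2\alpha_j}$ with $\alpha_j$ possibly close to $-1$ does not spoil the $L^\infty$ bound on $z_n$ nor the Brezis–Merle step; this is handled exactly as in Lemma~\ref{kato}, splitting the Green representation into the logarithmic part (which is integrable against the measure $h(y)\,dy$ with $h$ as in \eqref{acha}) and a bounded part, and it is the reason the hypotheses $\alpha_j>-1$ and \ref{H2} are used. The rest is routine localisation and the classical Brezis–Merle lemma.
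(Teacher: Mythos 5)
Your proposal does not work: the Brezis--Merle step is circular, and this is precisely the difficulty the paper is designed to avoid. To apply Brezis--Merle to $w_n$ with $-\Delta_g w_n=\tilde K_n e^{u_n}\chi_{\Sigma_1}$ you need a uniform bound on $\|\Delta_g w_n\|_{L^1(\Sigma_1)}=\int_{\Sigma_1}|\tilde K_n|e^{u_n}$, which (since $\tilde K_n$ has a fixed sign on $\Sigma_1$) is exactly the quantity the lemma is asserting to be bounded. You try to derive it from the global identity $\int_\Sigma \tilde K_n e^{u_n}=\int_\Sigma f_n=O(1)$, but this carries no information about either signed part separately: $\int_{\Sigma^+}\tilde K_n e^{u_n}$ and $\int_{\Sigma^-}|\tilde K_n|e^{u_n}$ could both diverge with their difference bounded. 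The paper flags this explicitly right before the lemma, around~\eqref{absvalue}: it is not known a priori that $\int_\Sigma|\tilde K_n|e^{u_n}$ is finite, and this is the whole point. Your intended rescue via Lemma~\ref{kato}~(b) does not repair it: a uniform \emph{lower} bound $u_n\geq\fint_\Sigma u_n^--C$ gives a lower bound on $e^{u_n}$, never an upper bound on $\int_{\Sigma_1}\tilde K_n e^{u_n}$; and the dichotomy on $\fint_\Sigma u_n^-$ does not help, since in the first case you are back to the unproved bound and in the second case the reasoning is again circular (you invoke ``$\int\tilde K_n e^{u_n}$ bounded on $\Sigma^+$'', which is the claim). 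Finally, the decomposition $u_n=w_n+z_n$ with $z_n$ ``absorbing $-f_n$, the mean value and the harmonic correction'' is not actually a splitting of the PDE unless $z_n$ also carries the source $\tilde K_n e^{u_n}(1-\chi_{\Sigma_1})$, and then the claimed $L^\infty$ bound on $z_n$ in $\Sigma_1$ requires controlling that very source, again circular.

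The paper's proof avoids this entirely by never invoking Brezis--Merle. It tests~\eqref{equationbis} with $\varphi^2$, where $\varphi$ is the first Dirichlet eigenfunction of $\Sigma_1$, giving $\int_{\Sigma_1}\tilde K_n\varphi^2 e^{u_n}=-\int_{\Sigma_1}u_n\,\Delta_g(\varphi^2)+O(1)$. Since $f:=\Delta_g(\varphi^2)$ has zero mean, the term $\int u_n^- f$ is controlled directly by Lemma~\ref{kato}~(a) (there is where the Kato lemma is actually used, on the $L^p$ bound of the centred negative part, not on the lower bound (b)). The term $\int u_n^+|f|$ is then absorbed by Young's inequality with a weight $|\varphi^2\tilde K_n|^\gamma$ and the elementary inequality $(t^+)^{1/\gamma}\leq C+e^t$, producing an $\varepsilon\int_{\Sigma_1}\varphi^2|\tilde K_n|e^{u_n}$ term that gets reabsorbed into the left-hand side. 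This bootstrap is what replaces the missing a priori $L^1$ control and is the genuinely new ingredient your approach is missing.
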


\begin{proof}
Take $\Sigma_1$ a smooth domain such that $\overline{\Sigma_0} \subset \Sigma_1 \subset \overline{\Sigma_1} \subset \Sigma^{\pm}$.  Let $\varphi$ be the first eigenfunction of the Laplace operator in $\Sigma_1$, that is,

$$
\begin{cases}
 -\Delta_g \varphi = \lambda_1 \varphi  & \mbox{ in } \Sigma_1, \\ \varphi =0  & \mbox{ on } \partial \Sigma_1. \end{cases}
$$

Next, we multiply \eqref{equationbis} by $\varphi^2$, and integrate by parts over $\Sigma_1$ to obtain

\begin{equation}\label{1}
\displaystyle{ \int_{\Sigma_1} \tilde{K}_n \varphi^{2} e^{u_n} = -  \int_{\Sigma_1}  u_n \Delta_g (\varphi^2) } + O(1).
\end{equation}

Let us denote $f= \Delta_g (\varphi^2) = 2(|\nabla \varphi |^2  - \lambda_1 \varphi^{2})$. Observe that $\int_{\Sigma_1} f =0$. Then

$$ \int_{\Sigma_1}  u_n^{-} f =  \int_{\Sigma_1}  \left ( u_n^{-} - \fint_{\Sigma} u_n^- \right ) f,$$
so that, by Lemma \ref{kato}, a),

\begin{equation}\label{2} 
\left | \int_{\Sigma_1}  u_n^{-} f \right | \leq \left \|  u_n^{-} - \fint_{\Sigma} u_n^- \right \|_{L^1(\Sigma_1)} \|  f \|_{L^{\infty}(\Sigma_1)} \leq C.
\end{equation}
On the other hand, for any $\gamma>0$,

$$ \int_{\Sigma_1}  u_n^{+} |f| \leq C \int_{\Sigma_1}  u_n^{+} \leq  C  \int_{\Sigma_1}  u_n^{+} \frac{|\varphi^2 \tilde{K}_n|^{\gamma} }{|\varphi^2 \tilde{K}_n|^{\gamma}}.$$
By Young inequality we obtain

\begin{equation}\label{3}
 \int_{\Sigma_1}  u_n^{+} |f| \leq \e \int_{\Sigma_1} |u_n^{+}|^{\frac{1}{\gamma}} \varphi^{2} |\tilde{K}_n| + C_\e \int_{\Sigma_1} \frac{1}{|\varphi^2 \tilde{K}_n|^{\frac{\gamma}{1-\gamma}}}.
\end{equation}
We can take $\gamma >0$ sufficiently small so that the second integral term in the right hand side is finite (recall that, by Hopf lemma, $\varphi \sim d(x, \partial \Sigma_1)$ near the boundary). Then, by \eqref{1}, \eqref{2} and \eqref{3} 

$$
 \int_{\Sigma_1} |\tilde{K}_n| \varphi^{2} e^{u_n} \leq C +  \e \int_{\Sigma_1} |u_n^{+}|^{\frac{1}{\gamma}} \varphi^{2} |\tilde{K}_n|.
$$
We now use the inequality $(t^+)^{\frac{1}{\gamma}} \leq C + e^t$ to conclude that

$$
 \int_{\Sigma_1} |\tilde{K}_n| \varphi^{2} e^{u_n} \leq C,
$$
finishing the proof.

\end{proof}

\medskip In order to prove Proposition \ref{propcompa1}, we will need the following result, which is based on a mean value inequality for subharmonic functions.

\begin{lemma}\label{gtest}
Let $w$ be a function defined in $\Sigma_0 \subset \Sigma$, $x_0 \in \Sigma_0$, and assume that $-\Delta_g w(x) \leq A$ for all $x\ \in \Sigma_0$,  for some positive value $A>0$. Take $R>0$ such that
$$ R < \min \left \{ \frac 1 5 d(x_0, \partial \Sigma_0),\ \frac 1 2 diam (\Sigma_0) \right \}.$$ Then there exists $C>0$ depending only on $\Sigma_0$ and $A$ such that

$$ \sup_{x \in B_{x_0}(R/4)} w(x) \leq C \left( 1+ \fint_{B_{x_0}(R)} w \right ).$$

\end{lemma}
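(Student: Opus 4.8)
The plan is to compare $w$ with the solution of a Poisson problem that absorbs the Laplacian bound, and then invoke a mean value inequality for subharmonic functions. First I would fix a reference ball $B = B_{x_0}(R)$; since $R$ is bounded in terms of $\diam(\Sigma_0)$ and the distance to $\partial\Sigma_0$, the geometry of $B$ (injectivity radius, metric coefficients) is controlled uniformly, so all constants below depend only on $\Sigma_0$ and $A$. Let $\phi$ solve $-\Delta_g \phi = A$ in $B$ with $\phi = 0$ on $\partial B$; by elliptic estimates on a ball of controlled size one has $0 \le \phi \le C_1$ on $B$, with $C_1 = C_1(\Sigma_0, A)$. Set $\tilde w = w - \phi$. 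Then $-\Delta_g \tilde w = -\Delta_g w - A \le 0$ in $B$, i.e.\ $\tilde w$ is subharmonic in $B$.

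Next I would apply the sub-mean-value property to $\tilde w$. On a Riemannian ball of radius $R$ with controlled geometry, a subharmonic function satisfies a local maximum estimate of the form
\[
\sup_{B_{x_0}(R/4)} \tilde w \le C_2 \fint_{B_{x_0}(R/2)} \tilde w^+ \le C_2 \fint_{B_{x_0}(R/2)} |\tilde w|,
\]
which is the Riemannian analogue of the Euclidean mean value inequality (it follows, e.g., from the De Giorgi–Nash–Moser iteration applied to the subsolution $\tilde w^+$, or from comparison with the Euclidean case after freezing coefficients, all uniform because the ball has controlled size). To pass from the mean over $B_{x_0}(R/2)$ to the mean over $B_{x_0}(R)$ I would use that $\tilde w$ is subharmonic, hence $r \mapsto \fint_{\partial B_{x_0}(r)} \tilde w\, d\sigma$ is essentially monotone (up to a controlled correction for the metric), giving $\fint_{B_{x_0}(R/2)} \tilde w \le C_3 \fint_{B_{x_0}(R)} \tilde w + C_3$; combined with a bound on $\fint |\tilde w|$ in terms of $\fint \tilde w$ valid for subharmonic functions bounded above (here $\tilde w \le \sup_{B_{x_0}(R/4)}\tilde w$ is not yet known, so instead I would simply estimate directly: $\fint_{B_{x_0}(R/2)}|\tilde w| \le \fint_{B_{x_0}(R/2)} \tilde w + 2\fint_{B_{x_0}(R/2)} \tilde w^-$ and control $\tilde w^-$ via Harnack for the subharmonic-from-above situation, or more simply bound $\sup \tilde w^+$ by $\fint \tilde w^+$ and note $\tilde w^+ \le |\tilde w|$). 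Unwinding, one gets
\[
\sup_{B_{x_0}(R/4)} \tilde w \le C_4\Bigl(1 + \fint_{B_{x_0}(R)} \tilde w\Bigr).
\]

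Finally I would translate back: $w = \tilde w + \phi \le \tilde w + C_1$, and $\fint_B \tilde w = \fint_B w - \fint_B \phi \le \fint_B w$, so
\[
\sup_{B_{x_0}(R/4)} w \le C_1 + \sup_{B_{x_0}(R/4)}\tilde w \le C_1 + C_4\Bigl(1 + \fint_{B_{x_0}(R)} w\Bigr) \le C\Bigl(1 + \fint_{B_{x_0}(R)} w\Bigr),
\]
as desired. The main obstacle I anticipate is making the ``mean value inequality for subharmonic functions'' precise and uniform on a Riemannian manifold: one must ensure the constant does not degenerate, which is exactly why the hypothesis forces $R$ to be a small fraction of both $\diam(\Sigma_0)$ and $\dist(x_0,\partial\Sigma_0)$ — this pins down a ball on which the metric is comparable to the Euclidean one with uniform constants, so that the classical Euclidean sub-mean-value estimate (or Moser iteration) transfers with controlled loss. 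A secondary technical point is handling the negative part $\tilde w^-$ cleanly when going from the $R/2$-mean to the $R$-mean; the cleanest route is to prove the $\sup$-over-$B_{x_0}(R/4)$ bound directly in terms of $\fint_{B_{x_0}(R)}\tilde w^+$ via a single Moser-type iteration on the larger ball, thereby avoiding any intermediate radius.
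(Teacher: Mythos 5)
Your proposal follows essentially the same route as the paper: correct $w$ by a solution of a Poisson problem with constant right-hand side so that the result is subharmonic, then invoke the Li--Schoen mean value inequality. Two differences are worth flagging. First, the paper solves $-\Delta_g v = -A$ once on all of $\Sigma_0$ (with zero boundary data) rather than on the varying ball $B_{x_0}(R)$; this produces a single fixed $v$ with $\|v\|_{L^\infty} \leq C(\Sigma_0, A)$ and makes the uniformity-over-$(x_0,R)$ discussion in your first paragraph unnecessary. Second, and more importantly, the part of your argument dealing with the possible negativity of $\tilde w$ does not actually close: the Moser/Li--Schoen estimate gives $\sup_{B_{x_0}(R/4)} \tilde w^+ \leq C\fint_{B_{x_0}(R)}\tilde w^+$, but $\fint\tilde w^+$ is \emph{not} controlled by $1+\fint\tilde w$ without a lower bound on $\tilde w$ (take $\tilde w$ a large multiple of a nonconstant harmonic function vanishing at $x_0$: the mean over $B_{x_0}(R)$ stays near zero while $\sup_{B_{x_0}(R/4)}$ and $\fint\tilde w^+$ both blow up). Indeed the lemma cannot hold for $w$ that is unbounded from below; what saves it — and is implicit in the paper's terse proof as well — is that in both applications $w$ is bounded below by a controlled constant (either $w = u_n^+ \geq 0$, or $w = u_n - \fint_{\Sigma} u_n^- \geq -C$ by Lemma~\ref{kato}, part b)). Given such a lower bound $-B$, the clean route is to apply \cite[Theorem~2.1]{li-schoen} to the nonnegative subharmonic function $(w+v) + \|v\|_{L^\infty} + B$ on $B_{x_0}(R)$, which yields the stated inequality directly and avoids the intermediate radius $R/2$ and any discussion of $|\tilde w|$ or $\tilde w^-$.
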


\begin{proof}
Define $v$ as the solution of the problem

$$ \left\{\begin{array}{ll}
-\Delta_g v = -A,  \qquad & \text{in $\Sigma_0$,}\\
v = 0, \qquad  &\text{on $\partial \Sigma_0 $.}
\end{array}\right.
$$

Clearly $v$ is smooth and $w+v$ is a subharmonic function. We now apply the mean value inequality for subharmonic functions (see \cite[Theorem~2.1]{li-schoen} for its version on manifolds) to conclude.

\end{proof}

\begin{proof}[Proof of Proposition~\ref{propcompa1}]

Take $\S_0 \subset \overline{\S_0} \subset \Sigma^-$, $x \in \S_0$ and fix $r>0$ sufficiently small. We apply Lemma ~\ref{gtest} to $w=u^+$ and we obtain
\begin{eqnarray*}
& \displaystyle{ \sup_{B_x(r) } u_n^+(x) \leq C+ C \int_{B_{x}(4r)} u_n^+   =C+  C \int_{B_{x}(4r)} \frac{u_n^+}{p} \frac{(-\tilde{K}_n)^{1/p}(x)}{(-\tilde{K}_n)^{1/p}(x)}} \\
& \displaystyle{ \leq C+  C \int_{B_{x}(4r)} e^{\frac{u_n}{p}} \frac{(-\tilde{K}_n)^{1/p}(x)}{(-\tilde{K}_n)^{1/p}(x)}  \leq
 C+  C \left( \int_{B_{x}(4r)} -\tilde{K}_n(x) e^{u_n} \right)^{1/p}  \left( \int_{B_{x}(4r)} \frac{1}{(-\tilde{K}_n)^{\frac{1}{p-1}}(x)} \right)^{\frac{p-1}{p}}} .
\end{eqnarray*}
It suffices to choose a large enough $p$ and use Lemma~\ref{intestimate} to conclude that $\displaystyle{ \sup_{B_x(R) } u_n^+(x)} < C$.


\end{proof}

We now turn our attention to Proposition \ref{propcompa2}. The proof follows the argument of \cite{ChenLiMPMS}, with the main difference that our solutions $u_n$ are not positive. This difficulty can be bypassed thanks to the following lemma, whose proof is based on Lemma \ref{kato}.

\begin{lemma} Under the hypotheses of Theorem \ref{thmcompa}, and given $\delta>0$, there exists $C>0$ such that

\beq\label{oscillation}
u_n(x_0)-u_n(x_1)\leq C
\eeq
for every $n \in \N$, $x_0 \in \Sigma^- \setminus \Gamma^{\delta}$, $x_1 \in \Sigma$.
Moreover, for any $r_0>0$, there exists $C>0$ such that

\beq\label{derivk}
|\nabla u_n(x) | \leq C \qquad \forall\, x \in \Sigma^- \setminus ( \Gamma^{\delta} \cup \bigcup_{i=\ell+1}^m B_{p_i}(r_0) ).
\eeq
\end{lemma}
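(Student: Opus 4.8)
The key point is to combine the one-sided bound $u_n \le C$ already available on $\Sigma^-\setminus\Gamma^\delta$ (Proposition~\ref{propcompa1}) with the lower bound on the negative part coming from Lemma~\ref{kato}. First I would fix $\delta>0$ and apply Proposition~\ref{propcompa1} with parameter $\delta/2$, so that $u_n(x_0)\le C$ for all $x_0\in\Sigma^-\setminus\Gamma^{\delta/2}$, hence in particular on $\Sigma^-\setminus\Gamma^\delta$. For the lower bound on $u_n(x_1)$ with $x_1\in\Sigma$ arbitrary, write $u_n = u_n^+ + u_n^-$ where $u_n^- = \min\{u_n,0\}\le 0$; thus $u_n(x_1) \ge u_n^-(x_1)$. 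Recalling $v_n = u_n^- - \fint_\Sigma u_n^-$ and that $\fint_\Sigma u_n^- \le 0$, we get $u_n^-(x_1) = v_n(x_1) + \fint_\Sigma u_n^- \le v_n(x_1)$, which is not the direction needed. So instead I bound $-u_n(x_1)$ from above: $-u_n(x_1)\le -u_n^-(x_1) = -v_n(x_1) - \fint_\Sigma u_n^- \le C - \fint_\Sigma u_n^-$ by Lemma~\ref{kato} b). Then
\beq\label{oscaux}
u_n(x_0)-u_n(x_1) \le C + \left(-\fint_\Sigma u_n^-\right).
\eeq
It therefore remains to show $-\fint_\Sigma u_n^- = \fint_\Sigma |u_n^-|$ is uniformly bounded, i.e. the $L^1$ norm of $u_n^-$ is controlled.

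\textbf{Controlling $\fint_\Sigma u_n^-$.} This is the crux. Here I would use that $u_n$ is bounded above on a fixed nonempty open set $\omega$ compactly contained in $\Sigma^-\setminus\Gamma^\delta$, together with the fact that the full oscillation of $u_n$ over $\Sigma$ is controlled in an averaged sense. Concretely, pick $\omega\subset\overline\omega\subset\Sigma^-\setminus\Gamma^\delta$ open; on $\omega$ we have $u_n\le C$, hence $u_n^+\le C$ on $\omega$. Decompose $u_n = \fint_\Sigma u_n + (u_n - \fint_\Sigma u_n)$. Applying a Poincaré–Sobolev inequality (or the Green representation exactly as in the proof of Lemma~\ref{kato}, using that $\|\Delta_g u_n\|$ in the measure sense is not controlled, but $\|\Delta_g u_n^-\|$ is) one controls $\|u_n^- - \fint_\Sigma u_n^-\|_{L^1} = \|v_n\|_{L^1}\le C$ by Lemma~\ref{kato} a). The missing ingredient is then an upper bound on $\fint_\Sigma u_n$ — equivalently a point where $u_n$ is not too large on average. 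But since $u_n^- = u_n^+ - u_n$ and $u_n^+\le C$ on $\omega$, we have $\fint_\omega u_n^- \ge \fint_\omega(-u_n) \cdot$ (sign care) — more cleanly: on $\omega$, $u_n^-(x) = u_n(x) - u_n^+(x) \ge u_n(x) - C$, so $\fint_\omega |u_n^-| = -\fint_\omega u_n^- \le C - \fint_\omega u_n = C - \fint_\omega(u_n - \fint_\Sigma u_n) - \fint_\Sigma u_n$. Meanwhile $-\fint_\Sigma u_n^- = \fint_\Sigma u_n^+ - \fint_\Sigma u_n$, and $\fint_\Sigma u_n^+$ needs a bound: this follows from $u_n^+ \le C + e^{u_n}$ together with the integrability of $\tilde K_n e^{u_n}$ on positivity/negativity subdomains via Lemma~\ref{intestimate}, plus Proposition~\ref{propcompa1} on $\Sigma^-\setminus\Gamma^\delta$, plus the not-yet-established control near $\Gamma$ — here one must be careful to only invoke facts proved before this lemma. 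The honest route is: $u_n^+ \le C + e^{u_n}$, so $\fint_\Sigma u_n^+ \le C + \fint_\Sigma e^{u_n}$, and $\int_\Sigma e^{u_n} = \int_\Sigma \frac{\tilde K_n e^{u_n}}{\tilde K_n}$ — which is not directly bounded. The cleanest argument avoiding circularity uses only that $u_n$ solves \eqref{equationbis} with $\int_\Sigma \tilde K_n e^{u_n}$ bounded (from integrating the equation): combined with $\|v_n\|_{L^1}\le C$ and the upper bound $u_n\le C$ on $\omega$, Harnack-type propagation of the $L^1$ bound gives $\|u_n^-\|_{L^1}\le C$, hence $-\fint_\Sigma u_n^-\le C$.

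\textbf{The gradient estimate.} Once \eqref{oscillation} is established, the bound \eqref{derivk} is comparatively routine. Fix $r_0>0$ and work in the region $\Omega = \Sigma^-\setminus(\Gamma^\delta \cup \bigcup_{i=\ell+1}^m B_{p_i}(r_0))$. On a slightly larger region $\Omega'$ (shrinking $\delta$ and $r_0$ a little) we already know $u_n\le C$ by Proposition~\ref{propcompa1}, and by \eqref{oscillation} we know $u_n \ge u_n(x_1) - C$ for any reference point $x_1$, but more usefully: on $\Omega'$, $u_n$ is bounded above, and since $\tilde K_n <0$ there, we control $-\Delta_g u_n = \tilde K_n e^{u_n} - f_n$; the term $\tilde K_n e^{u_n}$ is bounded above by $0$ and below by $-C$ (using $e^{u_n}\le e^C$), while away from the $p_i$'s the weight $e^{-h_m}$ in $\tilde K_n$ is smooth and bounded and $f_n\to f$ in $C^{0,\alpha}$. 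Hence $\|\Delta_g u_n\|_{L^\infty(\Omega')}\le C$. Since also $u_n$ is bounded above on $\Omega'$ and has $L^1$ norm controlled (from $\|v_n\|_{L^1}\le C$ and $u_n\le C$), we get $\|u_n\|_{L^\infty(\Omega')}\le C$ — actually it is simpler: the bound \eqref{oscillation} with $x_1$ ranging over a fixed point of $\Omega'$ already pins $u_n$ from below there, giving a two-sided $L^\infty$ bound on $\Omega'$. Then $\Delta_g u_n$ bounded in $L^\infty(\Omega')$ and $u_n$ bounded in $L^\infty(\Omega')$ yield, by standard elliptic $W^{2,p}$ estimates and Sobolev embedding on the slightly smaller $\Omega\Subset\Omega'$, a $C^{1}$ bound, in particular \eqref{derivk}.

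\textbf{Main obstacle.} The delicate point is the uniform bound on $\fint_\Sigma u_n^-$ (equivalently $\|u_n^-\|_{L^1}$) without circular reliance on estimates near $\Gamma$ or on $\Sigma^+$ that are only proved later. The resolution is to notice that Lemma~\ref{kato} already gives $u_n^- = v_n + \fint_\Sigma u_n^-$ with $\|v_n\|_{L^1}+\|v_n\|_{L^\infty}^- \le C$, so the whole issue reduces to bounding the single constant $\fint_\Sigma u_n^-$ from below (it is automatically $\le 0$), and this is achieved by combining the pointwise upper bound $u_n\le C$ on the fixed set $\omega\Subset\Sigma^-\setminus\Gamma^\delta$ from Proposition~\ref{propcompa1} with $\|v_n\|_{L^1}\le C$: on $\omega$, $\fint_\Sigma u_n^- = u_n^-(x) - v_n(x) \ge (u_n(x)-C) - v_n(x) \ge -C - v_n(x)$, and integrating over $\omega$ against $v_n$'s $L^1$ bound gives $\fint_\Sigma u_n^- \ge -C$. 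That closes \eqref{oscillation}, and everything else follows.
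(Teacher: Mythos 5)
Your reduction of the oscillation bound to a lower bound on $\fint_\Sigma u_n^-$ is where the argument breaks. That quantity is not bounded from below under the hypotheses of Theorem~\ref{thmcompa}: alternative (2) of that theorem explicitly allows $u_n\to-\infty$ uniformly, in which case $\fint_\Sigma u_n^-\to-\infty$, so your inequality $u_n(x_0)-u_n(x_1)\le C+(-\fint_\Sigma u_n^-)$ has an unbounded right-hand side even though the left-hand side is (trivially, for constant sequences) $0$. The attempted resolution in your final paragraph is also fallacious: the step $(u_n(x)-C)-v_n(x)\ge -C-v_n(x)$ requires $u_n(x)\ge 0$ on $\omega$, which you do not have; Proposition~\ref{propcompa1} only gives $u_n\le C$ there, not a lower bound. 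Similarly, for the gradient estimate, your claim that the oscillation bound with a fixed $x_1\in\Omega'$ ``pins $u_n$ from below'' is wrong: you would need a fixed absolute lower bound on $u_n$ at some reference point, which again fails if $u_n\to-\infty$.

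The paper avoids this trap entirely by never estimating $\fint_\Sigma u_n^-$ itself. Instead it bounds each of $u_n(x_0)-\fint_\Sigma u_n^-$ (from above) and $u_n(x_1)-\fint_\Sigma u_n^-$ (from below) separately, so that $\fint_\Sigma u_n^-$ cancels in the difference. The lower bound on $u_n(x_1)-\fint_\Sigma u_n^-$ is immediate from Lemma~\ref{kato}~b). The upper bound on $u_n(x_0)-\fint_\Sigma u_n^-$ is the real content: it comes from the mean value inequality of Lemma~\ref{gtest} applied to $w=u_n-\fint_\Sigma u_n^-$ (which satisfies $-\Delta_g w\le C$ on $\Sigma^-\setminus\Gamma^{\delta/2}$ since $\tilde K_n e^{u_n}\le 0$ there), combined with the observation that on the small ball $B_{x_0}(r)$ one has $u_n\le u_n^-+C$ (Proposition~\ref{propcompa1}), so the average of $w$ over $B_{x_0}(r)$ is controlled by the $L^1$ norm of $v_n=u_n^--\fint_\Sigma u_n^-$, which Lemma~\ref{kato}~a) bounds. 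This mean-value step is the key idea missing from your proposal. The gradient estimate then proceeds with the same shifted function $u_n-\fint_\Sigma u_n^-$, which is bounded in $L^\infty$ on suitable balls by the two one-sided estimates just described, together with a local $W^{2,p}$ elliptic estimate and Sobolev embedding.
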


\begin{proof} By Lemma \ref{kato}, b), we have that

\beq \label{x1} u_n(x_1) - \fint_{\Sigma} u_n^- \geq u_n^-(x_1) - \fint_{\Sigma} u_n^- \geq C. \eeq

Taking into account Lemma \ref{gtest}, we have that for any fixed $r\in(0,\frac{\delta}{2})$,

\begin{align*}  u_n(x_0)- \fint_{\Sigma} u_n^-
\leq C \left( 1+ \fint_{B_{x_0}(r)} \left( u_n(x)- \fint_{\Sigma} u_n^-    \right)  \right) \end{align*}

Moreover, by Proposition \ref{propcompa1}, $u_n(x) \leq u_n^-(x) + C$ for all $x \in B_{x_0}(r)$. Making use of Lemma \ref{kato}, a), we conclude

\beq \label{x0}
u_n(x_0)- \fint_{\Sigma} u_n^-  \leq  C \left( 1+ \fint_{B_{x_0}(r)} \left | u_n^-(x)- \fint_{\Sigma} u_n^-    \right | \right) \leq C
\eeq

This estimate, together with \eqref{x1}, shows that \eqref{oscillation} holds true.

We now turn our attention to the proof of \eqref{derivk}. Given $r_0>0$, take any $p>2$ and fix $x$ such that $B_x(r) \subset \Sigma^- \setminus ( \Gamma^{\delta} \cup \bigcup_{i=\ell+1}^m B_{p_i}(r_0) )$. Recall the inequality (see \cite[Theorem~9.11]{gt})

$$
\left \| u_n-\fint_{\Sigma} u_n^-\right \|_{W^{2,p}(B_{x}(\frac{r}{2}))}\leq C \left (||\tilde{K}_ne^{u_n}-f_n||_{L^{p}(B_x(r))}+\left \| u_n -\fint_{\Sigma} u_n^-\right \|_{L^{p}(B_x(r))} \right).
$$

Combining \eqref{x0} and Lemma~\ref{kato}, b), $u_n -\fint_{\Sigma} u_n^-$ is uniformly bounded in $L^{\infty}(B_x(r))$, whereas Proposition~\ref{propcompa1} implies that $\tilde{K}_ne^{u_n}-f_n$ is uniformly bounded on $B_x(r)$. Therefore, 

$$\left \|u_n-\fint_{\Sigma} u_n^-\right \|_{W^{2,p}(B_{x}(\frac r2))} \leq C.$$ 

In particular \eqref{derivk} holds.

\end{proof}

\begin{proof}[Proof of Proposition \ref{propcompa2}]

Since the proof is of local nature, we first pass to a problem in a planar domain. Given a point $p\in\Gamma$, we take a small neighborhood $U$ of p and an isothermal coordinate system $y =(y_1, y_2)$ centered at $p$ such that the metric $g$ takes the form $g = e^{\varphi}(dy_1^2 + dy_2^2)$ in $\Omega' \subset \mathbb{R}^2$, where $\varphi$ is smooth and $\varphi(0) = 0$. Consequently, for a subdomain $\Omega \subset \Omega'$, $u_n$ satisfies

$$
-\Delta u_n = e^{\varphi(y)}\tilde{K}_n(y) e^{u_n}-e^{\varphi(y)}f_n(y), \qquad \mbox{in $\Omega$}
$$
where $\Delta$ is the usual laplacian.
 
Let us define $u_{0,n}$ as the unique solution for the problem

\beq\label{ecuaux}
\left\{\begin{array}{ll}
\Delta u_{0,n} = e^{\varphi} f_n,  \qquad & \text{in $\Omega$,}\\
u_{0,n} = 0, \qquad  &\text{on $\partial \Omega$.}
\end{array}\right.
\eeq

If we now write the equation in the new variable $u_n - u_{0,n}$, which will be denoted again by $u_n$, we obtain:

\beq\label{eq2}
-\Delta u_n = W_n(y) e^{u_n} \qquad \mbox{ in $\Omega$, }
\end{equation}
where $W_n(y)=e^{\varphi(y)}e^{u_{0,n}(y)}\tilde{K}_n(y)$.

Moreover, $W_n\to W$ in $C^{2,\alpha}(\overline{\Omega})$. Assumption \ref{H1} is translated to $W$ in the form

$$ W \mbox{ is a }C^{2,\alpha}(\Omega) \mbox{ function, changes sign and  }\nabla W (x) \neq 0 \mbox{ in } \Gamma= \{ x \in \Omega: \ W(x)=0 \}.$$

Our proof is based on the method of moving planes, which allows us to compare the values of $u_n$ close to $\Gamma$. For the sake of clarity, we drop the subindex $n$ in the notation of the rest of this proof.

By the assumptions on $W$ for small $\delta>0$, there exists $\beta>0$ s.t.

\beq\label{ver1}
\left \vert \nabla W(y) \right\vert \geq \beta \, \mbox{ for any } y \mbox{ with } \vert W(y) \vert \leq \delta.
\eeq

For a given point $x_0\in\Gamma$, take a ball $B\subset \{y\in\Omega: W(y)>0\}$ tangent to $\Gamma$ at $x_0$. Applying the Kelvin transform which leaves $\partial B$ invariant and taking a neighborhood of $x_0$, we obtain a set (renamed as $\Gamma$) which is strictly convex (recall that $\Gamma$ is a $C^{2,\alpha}$ curve).

Next, through a translation and a rotation we define the new system as $x=(x_1,x_2)$ and $x_1=\gamma(x_2)$ which corresponds to the curve $\Gamma$. Let $\Omega_\e = \{x_1 < \gamma(x_2)+\e \} \cap \{x_1 > -2 \e \}$ and $\partial_l \OE=\left\{x:\ x_1-\gamma(x_2)=\varepsilon \right\}$. In summary, it is possible to choose a conformal map such that, for some $\varepsilon>0$ small, the following hold (see figure):
\begin{itemize}
\item[(i)] $x_0$ becomes the origin;
\item[(ii)] $\OE$ is located to the left of the line $x_1=\varepsilon$ and it is tangent to it;
\item[(iii)] $\partial_l \OE$ is uniformly convex;
\item[(iv)] $\frac{\partial W}{\partial x_1} \leq -\frac 12 \beta $, for every $x \in \OE$;
\item[(v)] $\overline{\OE} \cap \{p_{1},\ldots,p_m\} = \emptyset$.
\end{itemize}

 
\bigskip

\begin{center}
\begin{tikzpicture}[scale=0.6]

\draw[black,thick,domain=140:216.8] plot ({-20-20*cos(\x)}, {5*sin(\x)}) (-4.5,3.25) node[left] {$\Gamma$};
\draw[black,thick,domain=134.5:229] plot ({-18-20*cos(\x)}, {5*sin(\x)}) (-4,-4.25) node[left] {$\partial_{l}\Omega_{\varepsilon}$};

\path[draw] (-4,-4) -- (-4,4) (-4,4.25) node {\footnotesize $x_1=-2\varepsilon$};

\path[draw] (2,-4) -- (2,4) (2,-4.25) node {\footnotesize $x_1=\varepsilon$};

\begin{scope}
\clip[domain=140:216.8] plot ({-20-20*cos(\x)}, {5*sin(\x)});
\fill[gray, fill opacity=0.5] (-4,-10) rectangle (19,6);
\end{scope}

\begin{scope}
\clip[domain=134.5:229] plot ({-18-20*cos(\x)}, {5*sin(\x)}) ;
\fill[gray, fill opacity=0.2] (-4,-10) rectangle (10,5);
\end{scope}

\draw[->,color=black] (-5.75,0) -- (3.75,0);
\draw[->,color=black] (0,-4) -- (0,4);
\draw (2.9,0.7) node[anchor=north west] {$x_1$};
\draw (0,4) node[anchor=north west] {$x_2$};

\draw [fill=black] (0,0) circle (2pt) (0,0.3) node[left] {\small $x_0$};

\draw  (-6,-5.5) rectangle (14,6) (9.3,4.5) node[below] {$\Omega_{\varepsilon}$ is the whole shaded region} (6.5,2.7) node[right,color=black] {$\Omega_{\varepsilon} \cap \{W<0\}$} (6.5,1.7) node[right,color=black] {$\Omega_{\varepsilon} \cap \{W>0\}$};
 
\filldraw[draw=black,color=gray,fill opacity=0.2] (5.,2.5) rectangle  (6,3);
\filldraw[draw=black,color=gray,fill opacity=0.7] (5.,1.5) rectangle  (6,2);

\end{tikzpicture}

\end{center}

\bigskip


Let $m=\displaystyle{\min_{x \in \partial_l \OE} u(x)}$ and $M=\displaystyle{\max_{x \in \partial_l \OE} u(x)}$. We define $\tilde{u} \in C^2(\overline{\OE})$ such that:
\begin{enumerate}
\item[a)] $\tilde{u}=u$ in $\partial_l \OE$;
\item[b)] $m \leq \tilde{u} \leq M$;
\item[c)] $|\nabla \tilde{u} | \leq C$ in $\OE$. 
\end{enumerate}
Observe that c) is possible by \eqref{derivk}.

Let $w$ be the harmonic function

\beq\label{ecuaux}
\left\{\begin{array}{ll}
\Delta w = 0,  \qquad & \text{in $\OE$,}\\
w = \tilde{u}, \qquad  &\text{on $\partial\OE $.} 
\end{array}\right.
\eeq

Due to \eqref{oscillation}, the oscillation of $u$ on $\partial_l\OE$ is bounded, i.e.,

\beq\label{oscil}
M-m=\displaystyle{\max_{\partial_l\OE} u - \min_{\partial_l\OE} u \leq C}.
\eeq
Consequently, the oscillation of $w$ is also bounded in $\OE$. We also define a new auxiliary function $v$ as

\beq\label{funv}
v(x)=u(x)-w(x)+C_0(\eps+\gamma(x_2)-x_1),
\eeq
for some $C_0>0$ to be determined. It is clear that the function $v$ verifies

\beq\label{ecuaux2}
\Delta v + f(x,v(x))-C_0 \gamma''(x_2)= 0,  \qquad \text{in $\O_{\eps}$,}
\eeq
with
$$
f(x,v(x))=W(x)e^{v(x)+w(x)-C_0(\eps+\gamma(x_2)-x_1)}.
$$

We claim that for a suitable $C_0$

\beq\label{known}
v(x)\geq 0 \mbox{ in } \O_{\eps} \quad \mbox{ and } \quad v(x)=0 \mbox{ on }\partial_l\OE.
\eeq
The boundary condition is direct. In order to prove the first part, we distinguish two cases:

\begin{itemize}
\item{\textbf{Case 1}: $\frac{\eps}{2}< x_1-\gamma(x_2)\leq\eps$}

\

Taking into account (v), by \eqref{derivk} we have that
$$
\displaystyle{  \left\vert \frac{\partial u}{\partial x_1} \right \vert \leq C} \, \mbox{ and } \, \displaystyle{ \left\vert \frac{\partial w}{\partial x_1}\right\vert \leq C}.
$$
Consequently,

\beq\label{ineq1}
\frac{\partial v}{\partial x_1}=\frac{\partial u}{\partial x_1} - \frac{\partial w}{\partial x_1}-C_0 \leq \ C-C_0.
\eeq
It suffices to choose $C_0$ sufficiently large to obtain that $\frac{\partial v}{\partial x_1}$ is negative. Since $v=0$ on $\partial_l\OE$, it is clear that \eqref{known} holds.

\

\item{\textbf{Case 2}: $x_1-\gamma(x_2)\leq \frac{\eps}{2}$ and $x_1 \geq -2\eps$}

\

By \eqref{oscillation}, we have that
$$
\displaystyle{ v(x)=u(x)-w(x)+C_0(\eps+\gamma(x_2)-x_1) \geq \min_{\OE} u - \max_{\partial_l \OE } u + C_0 \frac{\eps}{2} \geq -C + C_0 \frac{\eps}{2}}.
$$

So, choosing $C_0$ sufficiently large, \eqref{known} holds.

\end{itemize}

\

Now we are ready to apply the method of moving planes to $v$ in the $x_1$ direction. Thus, we start from $x_1=\eps$ and move the line perpendicular to $x_1-$axis towards the left. Namely, let $T_{\lambda}=\left\{ x \in \mathbb{R}^2: \ x_1 \geq \lambda \right\}$ the half-plane, $M_{\lambda}=\left\{ x \in \mathbb{R}^2: \ x_1 = \lambda \right\}$ its boundary and $x_{\lambda}=(2\lambda-x_1,x_2)$ the reflection point of $x$ with respect to the line $M_{\lambda}$. Our goal is to prove that

\beq\label{gpr}
v(x_{\lambda}) \geq v(x),
\eeq
for every $x\in T_{\lambda} \cap \O_{\eps}$ for $\lambda \in \left[\frac{\eps}{2}-\eps_1,\eps\right]$, with some $\eps_1\in\left(0,\eps\right)$ to be determined. By \eqref{known} and \eqref{ineq1}, \eqref{gpr} holds for $\lambda\in \left(\frac{3}{4}\eps,\eps\right]$.

By a standard argument (see \cite{GNN}), we see that the moving planes argument can be carried on provided that
\beq\label{monf}
f(x,v)\leq f(x_{\lambda},v) \quad \mbox{ for every } x \in \O_{\eps} \quad \mbox{ with } \quad \eps>\lambda>\frac{\eps}{2}-\eps_1.
\eeq

It is easy to check that \eqref{monf} holds if

\beq\label{monf2}
\frac{\partial f(x,v(x))}{\partial x_1}=e^u\left(\frac{\partial W}{\partial x_1} +W\left(\frac{\partial w}{\partial x_1} + C_0 \right) \right) \leq 0, \, \quad \mbox{ if } x \in {\Omega}_\eps, \ x_1 > -\eps_1.
\eeq
In other words, if $f$ is monotone decreasing along the direction $(1,0)$, near $x_0$.

If $W(x) \leq 0$, it is enough to choose $C_0 >- \frac{\partial w }{\partial x_1} $ to verify
$$
\frac{\partial W}{\partial x_1} +W\left(\frac{\partial w}{\partial x_1} + C_0 \right) \leq W \left(\frac{\partial w}{\partial x_1} + C_0  \right) \leq 0.
$$

In the case that $W(x)>0$ by the assumptions on $W$, for every $\eps_1>0$ there exists a neighborhood $V_{\eps_1}$ of $\Gamma$ such that $W(x)\leq \eps_1 $.

Since $\frac{\partial w}{\partial x_1} + C_0 $ is bounded from above, then
$$
\frac{\partial W}{\partial x_1} + W\left(\frac{\partial w}{\partial x_1} + C_0 \right) \leq -\frac{\beta}{2} +\eps_1 \left(\frac{\partial w}{\partial x_1} + C_0  \right)\leq -\frac{\beta}{2}+ \eps_1 C,
$$
therefore we can take $\eps_1$ small enough to obtain the desired conclusion. We choose $\eps_1<\eps$.

In this way, the method of moving planes works up to $\lambda=\frac{\eps}{2} -\eps_1$. Therefore, \eqref{gpr} implies that $v(x)$ is monotone decreasing in the $(1,0)-$direction. In fact, we can repeat the previous argument rotating the $x_1-$axis by a small angle.

Thus, there exist $\eps_2>0$ and a fixed cone $\Delta_0$ such that for any $x\in B_{x_0}(\eps_2)$ we have

$$
v(y) \geq  v(x), \, \forall\; y \in \Delta_{x},
$$
and

\begin{equation}\label{nonempty}
\emptyset\neq\Delta_x\cap\Sigma^+\subset\Omega_\varepsilon
\end{equation}
where $\Delta_x$ denotes a translation of the cone $\Delta_0$ with $x$ at its vertex. By \eqref{funv}, we can transform the previous inequality into

\beq\label{monf3}
u(y) + C(\eps_1) \geq u(x) \quad \forall \,y \in \Delta_{x}.
\eeq
Moreover, there exists $\eta>0$, such that for any $x\in B_{x_0}(\eps_2)$ the intersection of the cone $\Delta_x$ with the set $\Sigma^+\setminus\Gamma^\eta$ has a positive measure, and the lower bound of the measure depends only on $\eta$ and the $C^1$ norm of $K$. Namely, setting $\Sigma_x=\Delta_x\cap(\Sigma^+\setminus\Gamma^\eta)$ we have that for any $x\in B_{x_0}( \eps_2)$

\begin{equation}\label{l1}
|\Sigma_x|\geq\eta_1>0.
\end{equation}
Thanks to this, the proof can be concluded combining \eqref{monf3} and the integral bound of Lemma \ref{intestimate}. Indeed by virtue of \eqref{nonempty} and property (v)

\begin{equation}\label{l2}
\min_{\Sigma_x}\tilde K\geq \eta_2>0,
\end{equation}
hence

\[
e^{u(x)}=\fint_{\Sigma_x}e^{u(x)}dy\overset{\eqref{monf3}}{\leq} C\fint_{\Sigma_x} e^{u(y)}dy\overset{\eqref{l2}}{\leq} C\fint_{\Sigma_x}  \tilde K e^{u(y)}dy \overset{\textnormal{Lemma \ref{intestimate}}}{\leq} \frac{C}{|\Sigma_x|} \overset{\eqref{l1}}{\leq} C.
\]

\end{proof}

\medskip

\begin{proof}[Proof of Theorem~\ref{thmcompa}]
Take $\varepsilon>\delta>0$ and the open set $\Sigma_1=\S^+ \setminus \overline{\Gamma^{\delta}}$, where $\varepsilon$ is given by Proposition~\ref{propcompa2}. By Propositions \ref{propcompa1} and \ref{propcompa2}, $u_n$ is uniformly bounded from above in $\Sigma \setminus \Sigma_1$. Moreover, by Lemma \ref{intestimate}, $\int_{\Sigma_1} \tilde{K}_n e^{u_n}$ is bounded. By \cite{bt} there are two possibilities:

\bigskip

{\bf Case 1}: $u_n$ is bounded from above in $\Sigma$. Therefore, $\tilde K_n e^{u_n} -f_n \in L^{p}(\Sigma \setminus \Sigma_1)$ for some $p>1$.  Elliptic regularity estimates imply that $u_n-\fint_{\Sigma} u_n \in W^{2,p}(\Sigma)$, so $u_n-\fint_{\Sigma} u_n \in L^{\infty}(\Sigma)$. If $\fint_{\Sigma} u_n$ is bounded, we obtain (1); if, on the contrary, a subsequence of $\fint_{\Sigma} u_n$ diverges negatively, we obtain (2).

\bigskip

{\bf Case 2}: The sequence $u_n$ is not bounded from above. Applying the results of \cite{bt} concerning the blow--up analysis for \eqref{equationbis} in $\Sigma_1$, we can assume that there exists a finite blow-up set $S=\{q_1,\ldots,q_r\} \subset \Sigma_1$. Moreover, by enlarging $\delta$ if necessary, we can assume that $u_n \to -\infty$ uniformly in $\partial \Sigma_1$, and
$$
\tilde{K}_n e^{u_n} \rightharpoonup \sum_{i=1}^r \beta(q_i) \delta_{q_i} \quad \mbox{ in the sense of weak convergence of measures in }
 \overline{\Sigma_1}, $$
with $\beta(q_i) \geq 8\pi$.

Now, let us define $v$ the solution of the problem

$$
\begin{cases}
 \begin{array}{ll} -\Delta_g v = C_1 h   & \mbox{ in } \Sigma \setminus \Sigma_1, \\ v =0  & \mbox{ on } \partial \Sigma_1, \end{array}
\end{cases}
$$
where $h$ is defined in \eqref{acha} and $C_1$ is a positive constant such that $C_1 h$ is an uniform upper bound of the term $\tilde K_n e^{u_n} -f_n$ in $\Sigma \setminus \Sigma_1$. Since $ h \in L^{p}(\Sigma \setminus \Sigma_1)$ for some $p>1$, then $ v \in L^{\infty}(\Sigma \setminus \Sigma_1)$ by standard regularity results. By the maximum principle, for any $C>0$ there exists $n_0 \in \N$ such that $u_n \leq v-C$ in $\Sigma \setminus \Sigma_1$ for $n\geq n_0$. This implies that $u_n \to -\infty$ uniformly in $\Sigma \setminus \Sigma_1$; in particular,

\beq\label{mconv}
\tilde{K}_n e^{u_n} \rightharpoonup \sum_{i=1}^r \beta(q_i) \delta_{q_i} \quad \mbox{ in the sense of weak convergence of measures in $\Sigma$}.
\end{equation}

It is worth to point out that, at this point of the proof, we can not apply yet the quantization part of the concentration-compactness Theorem of \cite{bt} unless we check the bounded oscillation condition on $\partial \Sigma_1$.

By \eqref{mconv}, employing the Green's representation formula for $u_n$, we have that

$$
u_n-\overline{u_n} \to \sum_{i=1}^r \beta(q_j) G(x,q_j) + h_m,
$$
uniformly on compact sets of $\Sigma\setminus (S\cup\{p_1,\ldots,p_{\ell} \} )$, where $h_m$ is defined in \eqref{accame}. Therefore, the sequence $u_n-\overline{u_n}$ admits uniformly bounded oscillation on any compact set of $\Sigma\setminus (S\cup\{p_1,\ldots,p_{\ell} \} )$. Indeed, there exists a constant $C>0$ such that

$$
\max_{\partial \Sigma_1} u_n-\min_{\partial \Sigma_1} u_n <C.
$$

By virtue of this condition, we can apply the quantization result of \cite{bt} to conclude that, up to subsequence,

$$
\lim_{k\to +\infty} \int_{\Sigma} \tilde{K}_n e^{u_n} \in \Lambda.
$$
\end{proof}

%

\section{Topological description of the energy sublevels and Morse inequalities}\label{sectproj}

\setcounter{equation}{0}

In this section we study the topology of the energy sublevels of $I_\lambda$. We shall observe a change in the topology of the sublevels between high and low ones. This fact will be decisive to prove the existence and multiplicity theorems.

First, we define a continuous projection $\Psi$ from low sublevels of $I_{\lambda}$ onto a compact topological space, whose character is inherited from the geometry of the function $K$. By using ideas from \cite{MalchiodiRuizSphere} we give a more accurate description for $k=1$ depending on the order of the conical points.

Next, we define the reverse map $\varphi$ from the corresponding space onto $I_{\lambda}^{-L}=\{u\in \overline{X} \,:\,I_\lambda(u) \leq -L\}$ with $L>0$ large enough such that the composition $\Psi\circ\varphi$ is homotopically equivalent to the identity map.

Finally, we adapt the well-known Morse inequalities for $I_{\lambda}$ which will be crucial to prove the multiplicity theorems.

Since this scheme has been used several times, see \cite{DeMLS,DeM1,DjadliMalchiodi,MalchiodiRuizSphere}, we will be sketchy and focus on the main differences concerning the sign-changing case.

\subsection{Topological description of the low sublevels of $I_\lambda$}

\

The first result allows us to project continuously functions $u$ with a low energy level onto the set of formal barycenters on a union of bouquets and a simplex contained in $\Sigma^+$.

\begin{proposition}\label{prop:lowsublevelsgeneral}
Let $\lambda\in(8k\pi, 8\pi(k+1))$, $k\in\N$, and assume \ref{H1}, \ref{H2}. Then for $L>0$ sufficiently large there exists a continuous projection

\[
\Psi: I_\lambda^{-L}\longrightarrow Bar_k(Z_{N,M}),
\]
where

\beq\label{ZNM}
Z_{N,M}=\coprod_{i=1}^N B^{g_i} \amalg Y_M \subset \Sigma^+\setminus\{p_1,\cdots,p_{\ell}\} \quad \mbox{ and } \quad Y_M=\{y_1,\ldots,y_M\},
\end{equation}
where $B^{g_i}\subset A_i$ is a bouquet of $g_i$ circles, with $g_i$ defined in \eqref{concomp}, and $y_h\in C_h$.\\
Moreover, if $\frac{\tilde{K}^+e^{u_n}}{\int_{\Sigma}\tilde{K}^+e^{u_n}dV_g}\rightharpoonup\sigma$, for some $\sigma\in Bar_k(Z_{N,M})$, then $\Psi(u_n)\to\sigma$.
\end{proposition}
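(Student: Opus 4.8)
\textbf{Plan of proof for Proposition~\ref{prop:lowsublevelsgeneral}.}

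The plan is to adapt the now-standard ``concentration implies barycenter'' machinery (as in \cite{DjadliMalchiodi,MalchiodiRuizSphere,DeMLS}) to the present sign-changing situation, the crucial point being that the barycenter space must be built only on the positive region $\Sigma^+$ rather than on all of $\Sigma$. First I would establish the improved Moser--Trudinger inequality with localization: if $u\in\bar X$ has energy below $-L$ with $L$ large and $\lambda\in(8k\pi,8(k+1)\pi)$, then the normalized measure $d\mu_u=\dfrac{\tilde K^+e^{u}}{\int_\Sigma \tilde K^+e^{u}\,dV_g}\,dV_g$ cannot spread its mass over $k+1$ mutually disjoint balls of a fixed small radius inside $\Sigma^+$ while carrying a definite fraction of mass in each. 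This is the analogue of the Chen--Li / Djadli covering lemma; the role of $\tilde K$ is handled by splitting $\tilde K=\tilde K^+-\tilde K^-$ and absorbing the negative part, using that $e^{-h_m}>0$ and that near the nodal set $\tilde K\le 0$ so the positive mass genuinely concentrates away from $\Gamma$. The outcome is that for $L$ large there exist $\varepsilon_0>0$ and $r_0>0$ such that $d\mu_u$ is $(\varepsilon_0,r_0)$-concentrated near at most $k$ points of $\overline{\Sigma^+}$.

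Next I would show that this concentration in fact takes place, up to a small error, on the retract $Z_{N,M}$. Because $\tilde K^+$ is supported in $\overline{\Sigma^+}=\coprod_i \overline{A_i}\amalg\coprod_h\overline{C_h}$, the concentration points lie in $\overline{\Sigma^+}$; one then uses the deformation retractions $A_i\simeq B^{g_i}$ and $C_h\simeq\{y_h\}$ of \eqref{concomp}, together with the fact that the singular points $p_1,\dots,p_\ell$ can be excised (a punctured surface retracts onto a lower-dimensional skeleton, and moving a concentration point off a $p_j$ changes the measure only slightly in the weak$^*$ sense since $d\mu_u$ has no atom at $p_j$ for $u$ smooth). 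Having placed the (at most $k$) concentration points and their masses on $Z_{N,M}$, one defines $\Psi(u)=\sum_i t_i\delta_{x_i}\in Bar_k(Z_{N,M})$ in the usual way: select the local maxima of a mollified density, read off weights $t_i$ from the mass carried near each, and compose with the retraction $\Sigma^+\setminus\{p_j\}\to Z_{N,M}$. Continuity of $\Psi$ with respect to the $H^1$ topology on $I_\lambda^{-L}$ follows because all the ingredients — the mollification, the selection of local maxima once the concentration is $(\varepsilon_0,r_0)$-tight, and the retraction — depend continuously on $d\mu_u$ in the weak$^*$ topology, and $u\mapsto d\mu_u$ is weak$^*$-continuous on $H^1$. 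For the last sentence of the statement, if $d\mu_{u_n}\rightharpoonup\sigma\in Bar_k(Z_{N,M})$ then the selected points and weights converge to those of $\sigma$, and since $\sigma$ is already supported on $Z_{N,M}$ the retraction acts trivially in the limit, giving $\Psi(u_n)\to\sigma$.

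The main obstacle I expect is the first step: proving the localized Moser--Trudinger / covering inequality in the sign-changing, singular setting, i.e.\ controlling $\log\int_\Sigma\tilde K e^{u}$ from above by $\tfrac14\sum_j\int_{B_j}|\nabla u|^2$ plus lower-order terms when the mass of $\tilde K^+e^u$ is split among several regions of $\Sigma^+$. One must ensure that the negative part $\tilde K^-e^u$ and the conical weights $e^{-h_m}$, which blow up near the $p_j\in\Sigma^+$, do not spoil the $8\pi$-quantization constant; near a $p_j$ with $\alpha_j>0$ the weight $d(x,p_j)^{2\alpha_j}$ actually \emph{improves} the constant to $8\pi(1+\alpha_j)$, which is consistent with excluding such points from the barycenter space only when $\lambda<8\pi(1+\alpha_j)$ — but here, in the general statement with all $\alpha_j>0$, the points are kept in $Z_{N,M}$ via the retraction and one needs the concentration analysis to tolerate a bounded-mass atom forming at a $p_j$. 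Handling this carefully, presumably by a further covering argument that treats small balls around the $p_j$ separately and uses the singular Moser--Trudinger inequality of \cite{Troy} on those balls, is where the real work lies; the rest is a routine transcription of the topological construction used in the cited references.
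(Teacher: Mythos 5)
Your proposal is in the right spirit and would likely succeed, but it takes a more hands-on route than the paper and over-complicates one step. The paper's argument has exactly two ingredients: (i) an \emph{improved Moser--Trudinger inequality} (the Chen--Li type estimate \eqref{nueva}), showing by contradiction that if $I_\lambda(u_n)\to-\infty$ then $\frac{\tilde K^+e^{u_n}}{\int\tilde K^+e^{u_n}}\rightharpoonup\sigma\in Bar_k(\overline{\Sigma^+})$ up to a subsequence; and (ii) purely topological machinery: $Bar_k(\overline{\Sigma^+})$ is a Euclidean Neighborhood Retract (from the appendix of \cite{BJMR}), so there is a weak$^*$-neighborhood $V$ and a retraction $\mathcal X:V\to Bar_k(\overline{\Sigma^+})$; the projection $\Psi$ is then $u\mapsto r_*\bigl(\mathcal X(\tfrac{\tilde K^+e^u}{\int\tilde K^+e^u})\bigr)$, where $r:\overline{\Sigma^+}\to Z_{N,M}$ is the obvious deformation retraction onto the bouquets and points. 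Your step (i) is morally the same as the paper's, but your step (ii) re-implements the projection from scratch (mollified density, selection of local maxima, reading off weights). That is a legitimate alternative --- it is essentially what Djadli--Malchiodi did before the ENR shortcut became standard --- but it is considerably longer and requires verifying continuity and quantization of the constructed map by hand, which the ENR argument gives for free.

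The one point where your proposal goes genuinely astray is the discussion of the singular points $p_j\in\Sigma^+$. You worry about ``a bounded-mass atom forming at a $p_j$'' and about the constants $8\pi(1+\alpha_j)$, and you write that ``the points are kept in $Z_{N,M}$ via the retraction''. In fact $Z_{N,M}\subset\Sigma^+\setminus\{p_1,\ldots,p_\ell\}$ by definition: the bouquets $B^{g_i}\subset A_i$ and the points $y_h\in C_h$ are chosen to \emph{avoid} the $p_j$'s (this is possible because a punctured surface-with-boundary still retracts onto an inner bouquet). The concentration claim (i) is only required to land in $Bar_k(\overline{\Sigma^+})$, where the $p_j$'s are allowed; the subsequent pushforward $r_*$ automatically moves any mass at $p_j$ onto $Z_{N,M}$. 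So no delicate analysis at the $p_j$'s is needed at this stage --- the improved Moser--Trudinger inequality is applied with $k+1$ disjoint balls in $\overline{\Sigma^+}$, and a local improvement of the constant near a $p_j$ with $\alpha_j>0$ only helps. The distinction you are gesturing at (when $\lambda<8\pi(1+\alpha_j)$ the point $p_j$ can be excised from the barycenter target) is the content of Proposition~\ref{prop:lowsublevels8pi16pi}, not of the present one.
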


\begin{remark}
Under assumption \ref{H3}, the topological set $Bar_k(Z_{N,M})$ is not contractible. In case $N=0$, $Bar_k(Z_{N,M})$ is the $(k-1)$-skeleton of $(M-1)$-symplex, which is not contractible if $k<M$ (see Exercise 16 in Section~2.2 of \cite{Hatcher}).
\end{remark}

\begin{proof}[Proof of Proposition~\ref{prop:lowsublevelsgeneral}]

This lemma is proved in the spirit of \cite{DjadliMalchiodi}, but following closely the approach of \cite{BJMR}.

\textbf{Claim:} If $I_\l(u_n)\to-\infty$, up to a subsequence,
 
$$
\sigma_n:=\frac{\tilde{K}^+ e^{u_n}}{\int_{\Sigma} \tilde{K}^+ e^{u_n}dV_g}\rightharpoonup\sigma \in Bar_k(\overline{\S^+}).
$$

Suppose by contradiction that there exist $k+1$ points $x_1,\ldots,x_{k+1} \subset supp(\sigma)$. Take $r>0$ such that $B_{x_i}(2r)\cap B_{x_j}(2r)=\emptyset$ for $i\neq j$. Therefore, there exists $\varepsilon>0$ such that $\sigma(B_{x_i}(2r))>2\varepsilon$. As a consequence, $\sigma_n(B_{x_i}(r))\geq \varepsilon$; this implies that for any $\tilde{\eps}>0$:

\begin{equation} \label{nueva}
 \log \int_\Sigma \tilde{K}^+ e^{u_n} - \frac{1}{|\Sigma|} \int_\Sigma {u_n}    \leq \frac{1}{(16(k+1)\pi-\tilde{\eps} ) } \int_\Sigma |\nabla u_n|^2 \,dV_g + C,
\end{equation}
where $C$ is a positive constant which depends on $\eps,r,\tilde{\eps}$.  This kind of improvements of the Moser-Trudinger inequality were first obtained by Chen and Li in \cite{ChenLiIneq}. We also refer to \cite[Proposition 2.2 and Proposition 2.3]{MalchiodiRuizSphere}, where a formulation of this inequality for nonnegative functions was given in the case $k=2$ (the case $k>2$ is analogous). 

Taking $\tilde \varepsilon$ sufficiently small, \eqref{nueva} violates the hypothesis that $I_\l(u_n)$ diverges negatively, yielding a contradiction.

\bigskip

By the claim, given a neighborhood $V$ of $Bar_k(\S^+)$ in the weak topology of measures, there exists $L_0>O$ large enough such that if $L>L_0$, then
\beq\label{PP}
\frac{\tilde{K}^+ e^u}{\int_{\S} \tilde{K}^+ e^u \, dV_{g}}\in V, \quad \forall\: u \in I_{\l}^{-L}.
\eeq

In the appendix of \cite{BJMR}, it is proved that $Bar_k(\overline{\S^+})$ is a Euclidean Neighborhood Retract. Observe that the weak topology of measures is metrizable on bounded sets, see \cite[Theorem~3.28]{Bre}. By \cite[Lemma~E.1]{Bredon}, there exists $V$ a neighborhood of $Bar_k(\overline{\S^+})$ in the weak topology of measures, and a continuous retraction $\mathcal{X}:V\to Bar_k(\overline{\S^+})$. Next, by \eqref{PP}, we define $\tilde{\Psi}$ as

$$
\begin{array}{cccccc}
\tilde{\Psi}:&I^{-L}_\lambda&\longrightarrow &V&  \xrightarrow{\mathcal X}&Bar_k(\overline{\S^+})\\
 &u&\longmapsto&\frac{\tilde{K}^+e^u}{\int_\S \tilde{K}^+e^u \, dV_{g}}& \longmapsto&\sum_{i=1}^k t_i\delta_{x_i}.
\end{array}
$$

Observe that we can retract continuously $\overline{A_i}$ onto $B^{g_i}$ and $\overline{C_h}$ onto a single point $y_h\in C_h$. Consequently, we can define the retraction

\beq\label{r}
r: \overline{\Sigma^+ }\longrightarrow \displaystyle{Z_{N,M}}.
\end{equation}

We are now in conditions to define the map $\Psi$ as the composition of $\tilde{\Psi}$ with the function $r*: Bar_k(\overline{\Sigma^+}) \longrightarrow Bar_k(Z_{N,M})$, the pushforward induced by the function $r$, then

\[
\begin{array}{cccc}
\Psi:&I^{-L}_\lambda&\longrightarrow&Bar_k(Z_{N,M} )\\
 &u&\longmapsto&\sum_i s_i\delta_{x_i},
\end{array}
\]
where the values $s_i$ are given by $\tilde\Psi$. Since $r$ is a retraction, if $\frac{\tilde{K}^+ e^{u_n}}{\int_{\Sigma}\tilde{K}^+  e^{u_n}dV_g}\rightharpoonup\sigma$, for some $\sigma\in Bar_k(Z_{N,M})$, then $\Psi(u_n)\to\sigma$.

\end{proof}

On the other hand, for $\lambda\in(8k\pi, 8\pi(k+1))$, $k\in\N$, and $Z$ a compact subset of $\Sigma^+\setminus\{p_1,\ldots,p_\ell\}$ we consider test functions \emph{concentrated} in at most $k$ points of $Z$ with arbitrary low energy. For $\gamma>0$ small enough, we consider a smooth nondecreasing cut-off function $\chi_\gamma:\R^+\to\R^+$ such that

\beq\label{chib}
\chi_\gamma(t)=\left\{
\begin{array}{ll}
t & \mbox{for $t\in[0,\gamma]$}\\
2\gamma & \mbox{for $t\geq 2\gamma$.}
\end{array}
\right.
\end{equation}

For $\mu>0$ and $\sigma=\sum_{i=1}^k t_i\delta_{x_i}\in Bar_k(Z)$, we define

$$
\phi_{\mu,\sigma}:\Sigma\to\R\qquad \phi_{\mu,\sigma}(x)=\log\sum t_i\left(\frac{\mu}{1+(\mu\chi_\gamma(d(x,x_i)))^2}\right)^2,
$$
$$
\varphi_{\mu,\sigma}(x)=\phi_{\mu,\sigma}(x)-\int_\Sigma \phi_{\mu,\sigma}\,dV_g.
$$

By the results in \cite[Lemma 4.11, Lemma 4.12]{DeMLS}, which hold independently of the genus of the surface, we have that

\begin{lemma}\label{lemma:testfunctionsgeneral}
Let $\lambda\in(8k\pi,8(k+1)\pi)$, $k\in\N$, and let $Z$ be a compact subset of $\Sigma^+\setminus\{p_1,\ldots,p_\ell\}$. Then we can choose $\gamma>0$ such that:

\begin{itemize}
\item[\emph{(i)}] given $L>0$ there exists a large $\mu(L)>0$ satisfying that for $\mu\geq\mu(L)$, $\varphi_{\mu,\sigma}\in\bar X$, where $\bar X$ is defined in \eqref{Xbar}, and $I_\lambda(\varphi_{\mu,\sigma})<-L$ for any $\sigma\in Bar_k(Z)$;
\item[\emph{(ii)}] for any $\sigma\in Bar_k(Z)$

\[
\frac{\tilde{K}^+e^{\varphi_{\mu,\sigma}}}{\int_{\Sigma}\tilde{K}^+ e^{\varphi_{\mu,\sigma}}dV_g}\rightharpoonup\sigma\qquad\mbox{as $\mu\to+\infty$.}
\]
\end{itemize}
\end{lemma}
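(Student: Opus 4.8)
The plan is to verify the two properties of the test functions $\varphi_{\mu,\sigma}$ directly, by adapting the standard computations (as in \cite[Lemma 4.11, Lemma 4.12]{DeMLS}) to the present setting, the key point being that everything is local around the concentration points $x_1,\ldots,x_k\in Z$. Since $Z$ is a compact subset of $\Sigma^+\setminus\{p_1,\ldots,p_\ell\}$, there is a uniform distance $2\delta_0>0$ between $Z$ and the set $\Gamma\cup\{p_1,\ldots,p_\ell\}$; choosing $\gamma\in(0,\delta_0/2)$ ensures that on each ball $B_{x_i}(2\gamma)$ the potential $\tilde K$ is bounded below by a positive constant and smooth, so $\tilde K^+=\tilde K$ there. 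This is exactly what makes the positivity assumption \ref{H1} and the location of the conical points enter: the bubbles live in a region where $\tilde K$ behaves like a smooth positive function, so the classical estimates apply verbatim.

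For part (ii), I would compute the weak limit of the measure $\tilde K^+ e^{\varphi_{\mu,\sigma}}/\int_\Sigma \tilde K^+ e^{\varphi_{\mu,\sigma}}\,dV_g$ as $\mu\to+\infty$. The normalization by $\int_\Sigma\phi_{\mu,\sigma}$ is an additive constant that cancels in this ratio, so it suffices to work with $e^{\phi_{\mu,\sigma}}=\sum t_i\bigl(\mu/(1+(\mu\chi_\gamma(d(x,x_i)))^2)\bigr)^2$. Away from the points $x_i$, $\chi_\gamma(d(x,x_i))$ is bounded below, so $e^{\phi_{\mu,\sigma}}=O(\mu^{-2})\to 0$ uniformly; near each $x_i$, a change of variables $y=\mu(x-x_i)$ (in normal coordinates) shows that $\mu^2\bigl(\mu/(1+(\mu|x-x_i|)^2)\bigr)^{-?}$... more precisely one checks $\int_{B_{x_i}(\gamma)} \bigl(\mu/(1+(\mu d(x,x_i))^2)\bigr)^2\,dV_g\to \pi$ by the standard bubble integral $\int_{\R^2}(1+|y|^2)^{-2}\,dy=\pi$, while the cross terms and the contribution of $x_j$ for $j\ne i$ to the ball $B_{x_i}(\gamma)$ are lower order. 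Hence $e^{\phi_{\mu,\sigma}}\,dV_g \rightharpoonup \pi\sum_i t_i\delta_{x_i}$ after normalization, and multiplying by the continuous function $\tilde K^+$, which equals $\tilde K(x_i)>0$ at each atom, gives the same weak limit $\sigma=\sum_i t_i\delta_{x_i}$ once renormalized. One has to be a little careful that the convergence is uniform in $\sigma\in Bar_k(Z)$, but this follows from the compactness of $Z$ and of the simplex of weights.

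For part (i), I would estimate $I_\lambda(\varphi_{\mu,\sigma})$ from above for large $\mu$. The Dirichlet energy: $\frac12\int_\Sigma|\nabla\varphi_{\mu,\sigma}|^2$ is computed by the usual bubble calculation and behaves like $4\cdot 8\pi\cdot\frac12\log\mu\cdot(\text{something})$ — precisely, for a single bubble one gets $\frac12\int|\nabla\varphi|^2 = 16\pi\log\mu + O(1)$, and for a convex combination the leading term is $\le 16\pi\log\mu + O(1)$ uniformly in $\sigma$ (the worst case being concentration at a single point). The linear term $\frac{\lambda}{|\Sigma|}\int_\Sigma\varphi_{\mu,\sigma}=0$ by the normalization. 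The logarithmic term: $-\lambda\log\int_\Sigma\tilde K^+e^{\varphi_{\mu,\sigma}}$; since $\int_\Sigma\tilde K^+e^{\phi_{\mu,\sigma}}\,dV_g$ stays bounded between two positive constants (by part (ii)'s computation) while $\int_\Sigma\phi_{\mu,\sigma}\,dV_g = -2\log\mu^2\cdot\frac{?}{}$... concretely $\int_\Sigma\phi_{\mu,\sigma}\,dV_g = 2\log\mu\cdot\frac{|?|}{}$ — the point is it grows like a negative multiple of $\log\mu$, contributing a term $\sim -\lambda\cdot 2\log\mu$ to $-\lambda\log\int\tilde K^+ e^{\varphi_{\mu,\sigma}}$... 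I would assemble these to get $I_\lambda(\varphi_{\mu,\sigma})\le (32\pi - 4\lambda)\log\mu + C$, and since $\lambda>8k\pi\ge 8\pi$, the coefficient $32\pi-4\lambda<0$ for... hmm, one actually needs $\lambda>8\pi$ for $k=1$ and the sharp Moser–Trudinger constant $8\pi(k+1)$ gives the right scaling for $k$ bubbles; the correct leading coefficient comes out negative precisely when $\lambda>8k\pi$, so $I_\lambda(\varphi_{\mu,\sigma})\to-\infty$ uniformly in $\sigma\in Bar_k(Z)$, and choosing $\mu(L)$ large gives $I_\lambda(\varphi_{\mu,\sigma})<-L$. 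I would also record that $\varphi_{\mu,\sigma}\in\bar X$, which needs $\int_\Sigma\tilde K e^{\varphi_{\mu,\sigma}}>0$: this follows since the mass concentrates where $\tilde K>0$, so for $\mu$ large the positive contribution near the $x_i$ dominates any (bounded) negative contribution from $\Sigma^-$.

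The main obstacle is bookkeeping the uniformity over $\sigma\in Bar_k(Z)$ in all the estimates — in particular making sure the energy bound $I_\lambda(\varphi_{\mu,\sigma})<-L$ and the weak convergence in (ii) hold with constants independent of the weights $t_i$ (including the degenerate cases where some $t_i\to 0$ or points collide) — rather than any genuinely new analytic difficulty; the sign-changing nature of $K$ plays essentially no role here because the test functions are supported, up to exponentially small errors, in the region $\{\tilde K>0\}$. Since the computations are entirely parallel to \cite[Lemma 4.11, Lemma 4.12]{DeMLS}, which as noted do not use the genus of the surface or the global sign of $K$, I would simply invoke those and verify that the only hypotheses used there — namely that $Z$ is a compact subset of the open set where $\tilde K$ is smooth and positive — are satisfied here by the choice of $\gamma$.
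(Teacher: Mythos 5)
Your proposal converges to exactly the paper's own proof, which is simply to invoke \cite[Lemma 4.11, Lemma 4.12]{DeMLS} with the observation that those estimates hold independently of genus and depend only on $Z$ being compactly contained in the region where $\tilde K$ is smooth and strictly positive, which your choice of $\gamma$ guarantees. One correction to your sketch of part (i): for a $k$-bubble configuration the Dirichlet energy satisfies $\tfrac12\int_\Sigma|\nabla\varphi_{\mu,\sigma}|^2 \leq 16k\pi\log\mu + O(1)$, not $\leq 16\pi\log\mu$; each well-separated bubble center carrying a nonnegligible weight contributes roughly $32\pi\log\mu$ to $\int|\nabla\phi_{\mu,\sigma}|^2$, so the single-bubble case is the most favorable for the Dirichlet term, not the worst as you suggested. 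Combined with $-\lambda\log\int_\Sigma\tilde K^+e^{\varphi_{\mu,\sigma}} = -2\lambda\log\mu + O(1)$, this yields $I_\lambda(\varphi_{\mu,\sigma}) \leq (16k\pi - 2\lambda)\log\mu + C$ uniformly in $\sigma$, which diverges to $-\infty$ exactly when $\lambda > 8k\pi$, matching the hypothesis; your final coefficient $(32\pi - 4\lambda)$ is off. This slip does not affect the validity of your conclusion since you ultimately fall back on citing \cite{DeMLS}, as the paper does.
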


The following two results allow us to deal with the case in which $\Sigma^+$ has  only simply connected components and $N^+\leq k$; however, it is restricted to $\lambda \in (8\pi,16\pi)$. 

\begin{proposition}\label{prop:lowsublevels8pi16pi}
Let $\lambda\in(8\pi,16\pi)$ and assume \ref{H1}, \ref{H2}.
Let

\beq
\label{JlambdaAC}
J_{\lambda,A_i}= J_\lambda\cap A_i\quad \mbox{for $i=1,\ldots,N$,}\qquad J_{\lambda,C_h}=J_\lambda\cap C_h\quad\mbox{for $h=1,\ldots,M$},
\end{equation}
where $J_\lambda$ is defined in \eqref{Jlambda}, and let us assume that, up to reordering, $M_\lambda\in\{1,\ldots,M\}$ is such that $J_{\lambda,C_h}\neq\emptyset$ if $h\in\{1,\ldots,M_\lambda\}$ and $J_{\lambda,C_h}=\emptyset$ if $h\in\{M_\lambda+1,\ldots,M\}$.\\
Then for $L>0$ sufficiently large there exists a continuous projection

\[
\Psi:I_{\lambda}^{-L}\to Bar_1(W_{N,M,J_{\lambda}}),
\]
where

\beq\label{WNMJ}
W_{N,M,J_{\lambda}}=\coprod_{i=1}^N B^{g_i+|J_{\lambda,A_i}|} \amalg\coprod_{h=1}^{M_\lambda} B^{|J_{\lambda,C_h}|}\amalg \hat Y_{M_{\lambda}},
\end{equation}
$B^{g_i+|J_{\lambda,A_i}|}\subset A_i$, $B^{|J_{\lambda,C_h}|} \subset C_h$ are bouquets of $g_i+|J_{\lambda,A_i}|$ and $|J_{\lambda,C_h}|$ circles respectively, with $g_i$ defined in \eqref{concomp}, for $i=1,\ldots,N$ and $h=1,\ldots,M_\lambda$, and $\hat Y_{M_{\lambda}}=\coprod_{h=M_\lambda+1}^M\{y_h\}$ with $y_h\in Y_h$ for $h=M_{\lambda}+1,\ldots,M$. \\ Moreover, if

\[
\frac{\tilde{K}^+ e^{u_n}}{\int_{\Sigma}\tilde{K}^+ e^{u_n}dV_g}\rightharpoonup \sigma, \qquad\mbox{for some $\sigma\in Bar_1(W_{N,M,J_{\lambda}})$,}
\]
then $\Psi(u_n)\to \sigma$.
\end{proposition}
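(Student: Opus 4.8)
\textbf{Proof proposal for Proposition \ref{prop:lowsublevels8pi16pi}.}

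The plan is to mimic the construction of $\Psi$ in Proposition \ref{prop:lowsublevelsgeneral}, but to refine the target space so as to take advantage of the extra concentration points provided by the conical singularities in $J_\lambda$. The starting observation is the same compactness-type claim: if $I_\lambda(u_n)\to-\infty$ with $\lambda\in(8\pi,16\pi)$, then, up to a subsequence, $\sigma_n:=\tilde K^+ e^{u_n}/\int_\Sigma\tilde K^+e^{u_n}\,dV_g$ converges weakly$^*$ to a measure supported at a single point of $\overline{\Sigma^+}$, i.e. $\sigma\in Bar_1(\overline{\Sigma^+})$. This follows verbatim from the improved Moser--Trudinger inequality \eqref{nueva} with $k=1$: two distinct points in the support would force the energy to be bounded below, a contradiction. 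So again there is $L_0>0$ such that for $L>L_0$ and $u\in I_\lambda^{-L}$ the normalized measure lies in a prescribed neighborhood $V$ of $Bar_1(\overline{\Sigma^+})$.

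The new ingredient is the choice of the retraction. Rather than retracting $\overline{A_i}$ onto an inner bouquet $B^{g_i}$ and each $\overline{C_h}$ onto a point, I would retract onto the larger bouquets $B^{g_i+|J_{\lambda,A_i}|}\subset A_i$ and $B^{|J_{\lambda,C_h}|}\subset C_h$: concretely, inside each component one keeps the topological loops coming from the genus and boundary (which give $g_i$ circles, as in \eqref{concomp}) and, in addition, attaches one small loop encircling each conical point $p_j$ that belongs to $J_\lambda$ and lies in that component. Since each $p_j$ with $p_j\in\Sigma^+$ is an interior point, a small circle around $p_j$ is non-contractible in $A_i\setminus\{p_j\}$ (resp. $C_h\setminus\{p_j\}$), so a component $A_i$ containing $|J_{\lambda,A_i}|$ such points retracts onto a wedge of $g_i+|J_{\lambda,A_i}|$ circles, and a contractible component $C_h$ containing $|J_{\lambda,C_h}|>0$ such points retracts onto a wedge of $|J_{\lambda,C_h}|$ circles; the components $C_h$ with $J_{\lambda,C_h}=\emptyset$ still retract to a single point $y_h$, giving the factor $\hat Y_{M_\lambda}$. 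This produces a retraction $r:\overline{\Sigma^+}\to W_{N,M,J_\lambda}$, and composing the retraction $\mathcal X:V\to Bar_1(\overline{\Sigma^+})$ (which exists because $Bar_1(\overline{\Sigma^+})=\overline{\Sigma^+}$ is an ENR and the weak$^*$ topology is metrizable on bounded sets, \cite[Theorem 3.28]{Bre}, \cite[Lemma E.1]{Bredon}) with the pushforward $r_*:Bar_1(\overline{\Sigma^+})\to Bar_1(W_{N,M,J_\lambda})$ and with $u\mapsto\tilde K^+e^u/\int_\Sigma\tilde K^+e^u\,dV_g$ defines $\Psi:I_\lambda^{-L}\to Bar_1(W_{N,M,J_\lambda})$. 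Continuity of $\Psi$ and the convergence statement ($\sigma_n\rightharpoonup\sigma\in Bar_1(W_{N,M,J_\lambda})$ implies $\Psi(u_n)\to\sigma$) are then immediate from the fact that $r$ is a retraction and that $r_*$ is the identity on $Bar_1(W_{N,M,J_\lambda})$, exactly as in Proposition \ref{prop:lowsublevelsgeneral}.

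The subtle point — and the one I expect to cost the most work — is the reason why one may legitimately enlarge the target from $Bar_1(Z_{N,M})$ to $Bar_1(W_{N,M,J_\lambda})$, that is, why the small loops around the points $p_j\in J_\lambda$ may be included in the image of $\Psi$. This is not a purely topological fact about $\overline{\Sigma^+}$: a single Dirac mass sitting exactly at a point $p_j$ would have infinite energy (the weight $\tilde K$ vanishes like $d(\cdot,p_j)^{2\alpha_j}$ there), so the mass cannot truly concentrate at $p_j$. The correct statement is that when $\lambda<8\pi(1+\alpha_j)$, a mass bubbling at $p_j$ costs strictly less than $8\pi$ of ``local energy'', so it is energetically favourable (this is precisely the role of the condition defining $J_\lambda$ in \eqref{Jlambda}, and the reason for the assumption $\alpha_j\in(0,1]$ which ensures $8\pi(1+\alpha_j)\le 16\pi$). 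Thus one must argue — following the analysis of \cite{MalchiodiRuizSphere} — that the relevant set of reachable profiles is not $Bar_1$ of the set of ``regular'' concentration points $\overline{\Sigma^+}$ alone, but of that set together with the singular points in $J_\lambda$; after a retraction this is homotopy-equivalent to a wedge of circles, one extra circle per point of $J_\lambda$. Since the reverse inequality (constructing test functions concentrating at these points and at the loops, hence realizing all of $Bar_1(W_{N,M,J_\lambda})$) is deferred to Lemma \ref{lemma:testfunctionsgeneral} and its analogue for the singular points, here it suffices to establish the projection; the main care is in making the retraction $r$ explicit and compatible with the weak$^*$ limits, i.e.\ checking that $r$ maps a mass that is bubbling near $p_j$ to the corresponding loop, and a mass bubbling near an interior point of $C_h$ with $J_{\lambda,C_h}=\emptyset$ to the point $y_h$, uniformly as $L\to+\infty$.
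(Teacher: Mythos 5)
There is a genuine gap, and it is exactly at the ``subtle point'' you flag. Your proposed map
\[
r:\overline{\Sigma^+}\longrightarrow W_{N,M,J_\lambda}
\]
cannot be a retraction. A retraction onto a subspace forces the induced map on first homology to be a split injection, but $H_1(\overline{\Sigma^+};\Z_2)\cong\bigoplus_i\Z_2^{g_i}$ whereas $H_1(W_{N,M,J_\lambda};\Z_2)\cong\bigoplus_i\Z_2^{g_i+|J_{\lambda,A_i}|}\oplus\bigoplus_h\Z_2^{|J_{\lambda,C_h}|}$, which is strictly larger whenever $J_\lambda\neq\emptyset$. The small loop around $p_j$ \emph{is} contractible inside $\overline{A_i}$ (or $\overline{C_h}$): the puncture has not been removed. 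Your own observation that ``a small circle around $p_j$ is non-contractible in $A_i\setminus\{p_j\}$'' is correct, but you then apply it to $A_i$ itself, which is a different space. So the topological enlargement of the target from $Bar_1(Z_{N,M})$ to $Bar_1(W_{N,M,J_\lambda})$ cannot be obtained by simply choosing ``a better retraction'' of $\overline{\Sigma^+}$.

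What is missing is the analytic step that makes the punctured set appear. The point of the hypothesis $\lambda<8\pi(1+\alpha_j)$ is that, by the singular improvement of the Moser--Trudinger inequality from \cite{MalchiodiRuizSphere} and its use in \cite{DeMLS}, the functional $I_\lambda$ is bounded below on those $u$ whose normalized mass $\tilde K^+e^u/\int\tilde K^+e^u$ is close to $\delta_{p_j}$ for $p_j\in J_\lambda$. Hence, for $L$ large, the low sublevel $I_\lambda^{-L}$ admits a continuous projection (call it $\beta$) not merely into $\overline{\Sigma^+}$, but into $\overline{\Sigma^+}\setminus\bigcup_{p_j\in J_\lambda}B_{p_j}(r)$ for a fixed small $r>0$. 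This punctured set decomposes as $\coprod_i A_i'\amalg\coprod_{h\le M_\lambda}C_h'\amalg\coprod_{h>M_\lambda}C_h$ with $A_i'=A_i\setminus\bigcup_{p_j\in J_{\lambda,A_i}}B_{p_j}(r)$ and similarly for $C_h'$, and \emph{this} set does deformation-retract onto $W_{N,M,J_\lambda}$, exactly because removing an open disk adds one circle. So the scheme should be: first land $\beta$ in the punctured surface (using the boundedness-from-below near the $p_j$'s), then compose with a retraction of the punctured surface onto $W_{N,M,J_\lambda}$, and finally push forward to $Bar_1$. Your reduction to a retraction of the unpunctured $\overline{\Sigma^+}$ skips the analytic input and leads to a map that does not exist.

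A minor further remark: your explanation ``a single Dirac mass at $p_j$ would have infinite energy'' is not the right mechanism. The issue is not that such profiles are infeasible, but that for $\lambda<8\pi(1+\alpha_j)$ the improved inequality at $p_j$ keeps $I_\lambda$ bounded below on profiles concentrating there, so they never enter $I_\lambda^{-L}$ for $L$ large. That is what justifies deleting the balls $B_{p_j}(r)$ from the target before retracting.
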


\begin{remark}
Observe that $W_{N,M,J_{\lambda}}$ is not contractible if and only if either $N\geq 1$ or $M_{\lambda}>1$, i.e. if \ref{H3} holds, or if $J_{\lambda}\neq\emptyset$, namely \ref{H4} holds.
\end{remark}

\begin{proof}
For $L>0$ sufficiently large and $r>0$, we apply some results of \cite{DeMLS}; more specifically, by Propositions~4.4.,~4.7.,~4.8., and Remark~4.10, of \cite{DeMLS}, we construct the continuous projection

$$
\displaystyle{\beta:I_{\lambda}^{-L}\to\overline{\Sigma^+}\setminus\bigcup_{p_i\in J_{\lambda}}B_{p_i}(r)},
$$
with the property that if $\frac{\tilde{K}^+ e^{u_n}}{\int_{\Sigma}\tilde{K}^+ e^{u_n}dV_g}\rightharpoonup\delta_x$ for some $x\in\displaystyle{\overline{\Sigma^+}\setminus\bigcup_{p_i\in J_{\lambda}}B_{p_i}(r)}$ then $\beta(u_n)\to x$. Notice that $I_{\lambda}$ is bounded from below on the functions belonging to $\tilde{\Psi}^{-1}(J_{\lambda})$.

We can rewrite $\displaystyle{\overline{\Sigma^+}\setminus\bigcup_{p_i\in J_{\lambda}}B_{p_i}(r)}$ as

$$
\coprod_{i=1}^N A'_i \amalg\coprod_{h=1}^{M_\lambda}C'_h\amalg\coprod_{h=M_\lambda+1}^M C_h.
$$
where $A'_i =A_i \setminus \bigcup_{p_i\in J_{\lambda,A_i}} B_{p_i}(r)$ and $C'_h= C_h \setminus \bigcup_{p_i\in J_{\lambda,C_h}} B_{p_i}(r)$.

The sets $A'_i$ can be retracted to an inner bouquet $B^{g_i+|J_{\lambda,A_i}|} \subset A'_i$ and $C'_h$ to $B^{|J_{\lambda,C_h}|} \subset C'_h$, in a similar way to the proof of Proposition~\ref{prop:lowsublevelsgeneral}, we can define a retraction

\[
r:\overline{\Sigma^+}\setminus\bigcup_{p_i\in J_{\lambda}}B_{p_i}(r) \longrightarrow W_{N,M,J_{\lambda}}.
\]

Finally, we can define $\Psi$ as the composition of $\beta$ with the pushforward $r*:Bar_1(\overline{\Sigma^+}\setminus\bigcup_{p_i\in J_{\lambda}}B_{p_i}(r))\longrightarrow Bar_1(W_{N,M,J_{\lambda}})$, then

\[
\begin{array}{cccc}
\Psi:&I^{-L}_\lambda&\longrightarrow&Bar_1(W_{N,M,J_{\lambda}})\\
 &u&\longmapsto& \delta_{x}.
\end{array}
\]

Since $r$ is a retraction, the second part of the proposition is proved.

\end{proof}

Next, for $\lambda\in(8\pi,16\pi)$, we introduce appropriate test functions that will allow to map a compact subset $W$ of $\Sigma^+\setminus J_\lambda$ into low sublevels of $I_\lambda$.\\

Let $\tilde\alpha=\max_{n\leq\ell\,|\,p_n\notin J_\lambda}\alpha_n$ or $\tilde\alpha=0$ if $J_\lambda=\{p_1,\ldots,p_\ell\}$ or $\ell=0$. For any $\alpha\in(\tilde\alpha,\frac{\lambda}{8\pi}-1)$, $\mu>0$ and $p\in W$, we define

\[
\phi_{\mu,p,\alpha}:\Sigma\to\R, \quad \phi_{\mu,p,\alpha}(x)=2\log\left(\frac{\mu^{1+\alpha}}{1+(\mu\chi_\gamma(d(x,p)))^{2(1+\alpha)}}\right),
\]
\[
\varphi_{\mu,p,\alpha}(x)=\phi_{\mu,p,\alpha}(x)-\int_\Sigma \phi_{\mu,p,\alpha}\,dV_g.
\]
where $\chi_\gamma$ is defined in \eqref{chib}.

Since $W$ is a compact subset of $\Sigma^+\setminus J_\lambda$ the results in \cite[Lemma 4.13, Lemma 4.14]{DeMLS} holds, namely
\begin{lemma}\label{lemma:testfunctions8pi16pi}
Let $\lambda\in(8\pi,16\pi)$ and let $W$ be a compact subset of $\Sigma^+\setminus J_\lambda$. Then we can choose $\gamma>0$ such that:
\begin{itemize}
\item[\emph{(i)}] given any $L>0$, there exists a large $\mu(L)>0$ satisfying that, for any $\mu\geq\mu(L)$, $\varphi_{\mu,p,\alpha}\in \bar X$ and $I_\lambda(\varphi_{\mu,p,\alpha})<-L$ for any $p\in W$;
\item[\emph{(ii)}] for any $p\in W$,

\[
\frac{\tilde{K}^+e^{\varphi_{\mu,p,\alpha}}}{\int_{\Sigma}\tilde{K}^+ e^{\varphi_{\mu,p,\alpha}}dV_g}\rightharpoonup\delta_p\qquad\mbox{as $\mu\to+\infty$}.
\]
\end{itemize}
\end{lemma}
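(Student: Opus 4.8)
The plan is to reduce Lemma~\ref{lemma:testfunctions8pi16pi} to the already cited statements \cite[Lemma 4.13, Lemma 4.14]{DeMLS}, after checking that the hypotheses under which those results were proved are met in the present, more general, geometric setting. Recall that the only feature of the sphere that was used in \cite{DeMLS} to construct and estimate the single-bubble test functions $\f_{\mu,p,\al}$ is that $p$ ranges over a \emph{compact} subset of $\Sigma\setminus\{p_1,\dots,p_m\}$ on which $\tilde K$ is strictly positive, together with the strict inequalities $\tilde\al<\al<\frac{\lm}{8\pi}-1$. Here $W\subset \Sigma^+\setminus J_\lambda$ is compact; by \eqref{JlambdaAC} and the definition of $J_\lambda$, every conical point $p_n$ with $n\le\ell$ that lies arbitrarily close to $W$ must satisfy $\lm\ge 8\pi(1+\al_n)$, i.e. $\al_n\le\frac{\lm}{8\pi}-1$, and moreover $p_n\notin J_\lambda$ forces $\al_n\le\tilde\al$. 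Hence the interval $(\tilde\al,\frac{\lm}{8\pi}-1)$ is nonempty precisely because $\lm\in(8\pi,16\pi)$ and because $W$ avoids $J_\lambda$, and any $\al$ in this interval makes $\f_{\mu,p,\al}$ admissible.

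The first step is to fix such an $\al$ and a small $\gamma>0$ — small enough that the balls $B_p(2\gamma)$, $p\in W$, stay inside a fixed open set on which $\tilde K\ge c>0$ and disjoint from the singular points $p_j$ not already near $W$ — and then to compute the three terms of $I_\lambda(\f_{\mu,p,\al})$ as $\mu\to+\infty$, exactly as in \cite[Lemma 4.13]{DeMLS}. The Dirichlet energy contributes $8\pi(1+\al)^2\log\mu+O(1)$, the mean-value term contributes $\frac{\lm}{|\Sigma|}\int_\Sigma\f_{\mu,p,\al}\,dV_g = O(1)$ after the normalization, and $\log\int_\Sigma\tilde Ke^{\f_{\mu,p,\al}}\,dV_g$ behaves like $2(1+\al)\log\mu + \log\tilde K(p) + O(1)$, using that near $p$ one has $\tilde K(x)\simeq d(x,p)^{2\al_j}K(x)$ only if $p$ coincides with a conical point, which it does not for $p\in W$ far from the $p_j$; for $p$ near an admissible $p_n$ the singular weight only helps, since $2\al_n(1+\al)\le 2\tilde\al(1+\al)<2\al(1+\al)$. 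Collecting, $I_\lambda(\f_{\mu,p,\al}) = \bigl(8\pi(1+\al)^2 - 2\lm(1+\al)\bigr)\log\mu + O(1)$, and the coefficient of $\log\mu$ is negative iff $\al<\frac{\lm}{8\pi}-1$, which holds by our choice of $\al$. Moreover the $O(1)$ is uniform in $p\in W$ by compactness, so given $L$ we may choose $\mu(L)$ large enough that $I_\lambda(\f_{\mu,p,\al})<-L$ for all $\mu\ge\mu(L)$ and all $p\in W$; this gives (i). The inclusion $\f_{\mu,p,\al}\in\bar X$ is immediate from the normalization $\int_\Sigma\f_{\mu,p,\al}\,dV_g=0$ together with $\int_\Sigma\tilde Ke^{\f_{\mu,p,\al}}\,dV_g>0$.

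For (ii), one observes that the measure $\tilde K^+e^{\f_{\mu,p,\al}}/\int_\Sigma\tilde K^+e^{\f_{\mu,p,\al}}\,dV_g$ has almost all its mass in $B_p(\mu^{-1/2})$ as $\mu\to+\infty$: outside any fixed ball $B_p(\rho)$ the density $e^{\f_{\mu,p,\al}}$ is $O(\mu^{-2(1+\al)})$ while the total mass is bounded below by a positive constant times $\mu^{-2(1+\al)}\cdot\mu^{2(1+\al)}\sim 1$ from the region $d(x,p)\lesssim\mu^{-1}$; hence for every continuous test function $\psi$ one gets $\int_\Sigma\psi\,d\sigma_{\mu,p}\to\psi(p)$, i.e. weak-$*$ convergence to $\delta_p$, uniformly in $p\in W$. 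Here one also uses continuity and positivity of $\tilde K$ on a neighbourhood of $W$ so that $\tilde K^+=\tilde K$ there and the value $\tilde K(p)$ cancels between numerator and denominator.

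The main obstacle is bookkeeping rather than conceptual: one must verify that the precise constants and error estimates of \cite[Lemma 4.13, Lemma 4.14]{DeMLS}, derived there for $\Sigma=\Sd$ via stereographic projection, go through verbatim on a general compact surface. This is true because all those estimates are genuinely local — they only use the asymptotic profile of the standard bubble in a geodesic ball, the conformal factor of an isothermal chart (smooth, $=1$ at the centre), and the behaviour of $\tilde K$ near $p$ — and none of them invokes the global topology of the sphere; the only place the sphere entered in \cite{DeMLS} was in the subsequent topological arguments, not in the test-function estimates. One should therefore state explicitly that $\gamma$ is chosen so small that $\bigcup_{p\in W}B_p(2\gamma)$ is contained in a coordinate neighbourhood where $\tilde K\ge c>0$, which is possible by compactness of $W$ and $W\subset\Sigma^+\setminus J_\lambda$, and then cite \cite{DeMLS} for the remaining computations.
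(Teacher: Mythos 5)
Your overall strategy coincides with the paper's: the paper itself gives no proof beyond the remark that, since $W$ is a compact subset of $\Sigma^+\setminus J_\lambda$, the results of \cite[Lemma~4.13, Lemma~4.14]{DeMLS} carry over, and you likewise reduce the lemma to those references while arguing that the estimates involved are local and hence independent of the global geometry of the surface. That reduction is the right idea, and your emphasis on the compactness of $W$ and the nonemptiness of the interval $(\tilde\alpha,\tfrac{\lambda}{8\pi}-1)$ is well placed.

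However, the explicit bookkeeping you include is not correct, and in one place it reflects a misunderstanding of the hypotheses. First, $W$ is a compact subset of $\Sigma^+\setminus J_\lambda$, not of $\Sigma\setminus\{p_1,\dots,p_m\}$: $W$ is allowed to contain the singular points $p_j$ with $j\le\ell$ and $\lambda\ge 8\pi(1+\alpha_j)$, and at such a point $\tilde K$ \emph{vanishes} when $\alpha_j>0$. Thus the claim that $\tilde K\ge c>0$ on a neighbourhood of $W$ (and that "the singular weight only helps" near such $p_n$) is false; in fact the weight $d(\cdot,p_n)^{2\alpha_n}$ suppresses the concentrated mass and makes the energy \emph{less} negative, and the requirement $\alpha>\tilde\alpha\ge\alpha_n$ is precisely what is needed to control this. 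Second, the expansion you give is off: with the normalization $\phi_{\mu,p,\alpha}=2\log\bigl(\mu^{1+\alpha}/(1+(\mu\chi_\gamma(d))^{2(1+\alpha)})\bigr)$ one has $\tfrac12\int|\nabla\phi_{\mu,p,\alpha}|^2=16\pi(1+\alpha)^2\log\mu+O(1)$ (not $8\pi(1+\alpha)^2$) and, away from singular points, $\log\int\tilde K e^{\varphi_{\mu,p,\alpha}}=2(1+2\alpha)\log\mu+O(1)$ (not $2(1+\alpha)\log\mu$), giving $I_\lambda(\varphi_{\mu,p,\alpha})=\bigl[16\pi(1+\alpha)^2-2\lambda(1+2\alpha)\bigr]\log\mu+O(1)$. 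The coefficient is negative iff $\lambda>8\pi(1+\alpha)^2/(1+2\alpha)$, which for $\alpha>0$ is strictly weaker than $\lambda>8\pi(1+\alpha)$; so the condition $\alpha<\tfrac{\lambda}{8\pi}-1$ is sufficient but not equivalent, contrary to your "iff". (Your own formula does not give that iff either: $8\pi(1+\alpha)^2<2\lambda(1+\alpha)$ reads $\lambda>4\pi(1+\alpha)$, which corresponds to $\alpha<\tfrac{\lambda}{4\pi}-1$.) The choice $\alpha\in(\tilde\alpha,\tfrac{\lambda}{8\pi}-1)$ is the one that makes the coefficient negative uniformly also when $p$ equals a conical point $p_n\notin J_\lambda$, where the relevant coefficient becomes $16\pi(1+\alpha)^2-2\lambda(1+2\alpha-\alpha_n)$ and the borderline case $\alpha=\alpha_n$ yields exactly the threshold $\lambda=8\pi(1+\alpha_n)$. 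None of this changes the reduction to \cite{DeMLS}, but the computation you offer as verification does not actually verify the cited estimates.
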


\subsection{Topological characterization of the high sublevels of $I_\lambda$}

\

The compactness result Theorem \ref{thmcompa}, combined with the deformation Lemma \cite[Proposition 2.3]{Lucia}, allows us to prove the next alternative bypassing the Palais-Smale condition, which is not known for the functional $I_\lambda$.

\begin{lemma}\label{lemma:deformation}
Let $\lambda\notin \Lambda$ and assume \ref{H1}, \ref{H2}. If $I_\lambda$ has no critical levels inside some interval $[a,b]$, then $I_\lambda^{a}$ is a deformation retract of $I_\lambda^b$.
\end{lemma}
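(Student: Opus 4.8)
The plan is to follow the now-standard Struwe-type monotonicity trick, replacing the Palais--Smale condition (unavailable for $I_\lambda$) by the deformation Lemma of \cite[Proposition 2.3]{Lucia}, which requires only that the sublevels of approximating functionals behave well under the flow. First I would recall the setup from \cite{Lucia}: one perturbs $\lambda$ slightly and considers the family $I_{\lambda'}$ for $\lambda'$ in a neighborhood of $\lambda$. The key point is that $\lambda\notin\Lambda$ so that, by the compactness result Theorem~\ref{thmcompa}, the set of solutions of $(*)_{\lambda'}$ (equivalently, critical points of $I_{\lambda'}$) stays uniformly bounded in $L^\infty(\Sigma)$, hence in $C^{2,\alpha}$ by elliptic regularity, for $\lambda'$ in a small enough neighborhood of $\lambda$; in particular the limiting case excludes both alternative (2) (divergence to $-\infty$, which cannot happen for a critical point of $I_\lambda$ on $\bar X$ since $I_\lambda$ is translation-invariant and we work on the zero-mean slice) and alternative (3) (which is ruled out precisely by $\lambda\notin\Lambda$). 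This is the step where Theorem~\ref{thmcompa} enters, and it is the heart of the matter.

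Next I would invoke \cite[Proposition 2.3]{Lucia} verbatim: under the assumption that the critical set of $I_\lambda$ at levels in $[a,b]$ is compact (which we have just established, since there are no critical levels in $[a,b]$ at all, so the critical set intersected with $I_\lambda^{-1}([a,b])$ is empty, a fortiori compact), and using the compactness of solutions of the approximated problems, one constructs a pseudo-gradient-type deformation $\eta:[0,1]\times I_\lambda^b\to I_\lambda^b$ with $\eta(0,\cdot)=\mathrm{id}$, $\eta(t,\cdot)|_{I_\lambda^a}=\mathrm{id}$, and $\eta(1,I_\lambda^b)\subset I_\lambda^a$. The construction in \cite{Lucia} is carried out for exactly this functional $I_\lambda$ on a compact surface; the only hypotheses to verify are \ref{H1}, \ref{H2} (so that $\tilde K$ is well-defined with the right regularity and the nodal structure is controlled) and $\lambda\notin\Lambda$ (so that the approximating compactness holds). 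Since $I_\lambda\in C^2(\bar X;\R)$ and $a,b$ may be taken as regular values — or, if they are not, one shifts them slightly, which is harmless since $[a,b]$ contains no critical level by hypothesis — the abstract machinery applies directly.

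Finally I would conclude: the deformation $\eta(1,\cdot)$ together with the inclusion $I_\lambda^a\hookrightarrow I_\lambda^b$ exhibits $I_\lambda^a$ as a strong deformation retract of $I_\lambda^b$. The main obstacle is really bookkeeping rather than mathematics: one must make sure that the compactness statement of Theorem~\ref{thmcompa} is applied to the correct family of equations — namely $(*)_{\lambda'}$ rewritten in the form \eqref{equationbis} with $f_n=\lambda'_n/|\Sigma|$ and $\tilde K_n=\tilde K$ constant in $n$ — and that the absence of critical levels in $[a,b]$ for $I_\lambda$ persists, after the perturbation in $\lambda$, as absence of critical levels in a slightly smaller interval $[a+\epsilon,b-\epsilon]$ for $I_{\lambda'}$ with $\lambda'$ close to $\lambda$; this is where one genuinely uses that blow-up is quantized away from $\Lambda$, so that no new critical points can appear in the relevant energy window as $\lambda'\to\lambda$. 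Once these compatibility checks are in place, the statement follows from \cite[Proposition 2.3]{Lucia} with no further work.
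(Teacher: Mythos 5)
Your proposal identifies the same two ingredients the paper uses: Lucia's monotonicity-trick deformation lemma \cite[Proposition 2.3]{Lucia}, backed by the compactness statement of Theorem~\ref{thmcompa} applied to the approximating equations $(*)_{\lambda'}$ (correctly rewritten in the form \eqref{equationbis} with $f_n=\lambda'/|\Sigma|$, $\tilde K_n=\tilde K$). That much is on the right track, and the discussion of why alternatives (2) and (3) of Theorem~\ref{thmcompa} are excluded for zero-mean critical points with $\lambda'\notin\Lambda$ near $\lambda$ is essentially correct.

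However, you miss the one adaptation that the paper actually singles out as the content of the proof. Lucia's Proposition~2.3 is proved for the regular case with $K>0$, where the functional is defined on the entire zero-mean Sobolev space; it constructs a descending flow on that space. Here the functional $I_\lambda$ is defined only on $\bar X=\{u:\int_\Sigma u\,dV_g=0,\ \int_\Sigma\tilde K e^u\,dV_g>0\}$, which is an \emph{open} subset of the zero-mean space (the constraint $\int\tilde K e^u>0$ is open precisely because $K$ changes sign). Before invoking Lucia's flow one must verify that it cannot carry a trajectory out of $\bar X$ in finite time; otherwise the deformation is not even well-defined. The paper's argument (Remark~\ref{rem:flow}) is that $I_\lambda(u)\to+\infty$ as $u$ approaches $\partial\bar X$, and since the flow is $I_\lambda$-decreasing, $\bar X$ is positively invariant — so the obstruction disappears. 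Your proposal skips this entirely, treating $\bar X$ as if it were a full Hilbert manifold on which the abstract machinery applies ``directly.'' That is the genuine gap: it is not a bookkeeping point but exactly what distinguishes the sign-changing case from the positive-$K$ case, and without it the citation of \cite{Lucia} is not justified.
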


\begin{remark}\label{rem:flow}
Actually the deformation lemma in \cite{Lucia} is originally proved for the regular case and for $K$ positive, but it adapts in a straightforward way to the singular one, even for $K$ sign-changing.\\
Indeed, in the proof of Proposition 2.3 of \cite{Lucia} a certain deformation is used following a flow in the domain of the functional, and $I_\lambda$ decreases along that flow. In our case $I_\lambda$ is not defined in the whole Sobolev space but on $X$, but $I_\lambda(u)\to+\infty$ as $u$ approaches the boundary of $\bar X$, so that $\bar X$ is positively invariant under this flow. Hence Proposition 2.3 of \cite{Lucia} is applicable and gives Lemma \ref{lemma:deformation}.
\end{remark}

In turn, since Theorem \ref{thmcompa} implies that the functional $I_\lambda$ stays uniformly bounded on the solutions of \ref{equation}, the above Lemma can be used to show that it is possible to retract the whole space $\bar X$ onto a high sublevel $I_\lambda^b$ (see \cite[Corollary 2.8]{Mal2}, also for this issue minor changes are required).

\begin{lemma}\label{lemma:highsublevels}
Let $\lambda\notin\Lambda$ and assume \ref{H1}, \ref{H2}. If $b>0$ is sufficiently large, the sublevel $I_\lambda^b$ is a retract of $\bar X$ and hence is contractible.
\end{lemma}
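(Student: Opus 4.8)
The plan is to combine Lemma \ref{lemma:deformation} with the compactness of the solution set guaranteed by Theorem \ref{thmcompa}, in the spirit of \cite[Corollary 2.8]{Mal2}. First I would invoke Theorem \ref{thmcompa} to observe that since $\lambda\notin\Lambda$, the set of solutions of \ref{equation} (equivalently, of the critical points of $I_\lambda$ on $\bar X$) is compact in $C^2$; in particular, the critical values of $I_\lambda$ are uniformly bounded above by some constant $b_0$. Thus for any $b > b_0$ the functional $I_\lambda$ has no critical levels in $[b,+\infty)$ — but this has to be read correctly, since $\bar X$ is not complete and the sublevels $I_\lambda^c$ for $c$ large could still fail to fill up $\bar X$. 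The point is rather that there are no critical points outside $I_\lambda^{b}$, so one would like a deformation pushing all of $\bar X$ down into $I_\lambda^b$.

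The key step is therefore to extend Lemma \ref{lemma:deformation} so as to allow $b = +\infty$, i.e.\ to deform $\bar X$ onto $I_\lambda^b$ when the interval $[b,+\infty)$ contains no critical levels. This is where the structure of $\bar X$ enters: as noted in Remark \ref{rem:flow}, the negative gradient-type flow constructed via \cite[Proposition 2.3]{Lucia} keeps $\bar X$ positively invariant, because $I_\lambda(u)\to+\infty$ as $u\to\partial\bar X$, so trajectories cannot escape through the boundary. Since the flow decreases $I_\lambda$ and there are no critical points in $\{I_\lambda > b\}$, standard deformation theory (combined with the compactness of the approximating problems, which is exactly what Theorem \ref{thmcompa} provides and which replaces the missing Palais--Smale condition) shows that every trajectory starting at a point $u$ with $I_\lambda(u) > b$ enters $I_\lambda^b$ in finite time, and one gets a strong deformation retraction of $\bar X$ onto $I_\lambda^b$. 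Concretely, I would fix $a<b$ both larger than $b_0$, apply Lemma \ref{lemma:deformation} on $[a,b]$ to retract $I_\lambda^b$ onto $I_\lambda^a$, and then iterate / pass to the limit on the increasing exhaustion of $\bar X$ by sublevels; the uniform bound on critical levels ensures the process stabilizes.

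Once $\bar X$ is shown to retract onto $I_\lambda^b$, contractibility of $I_\lambda^b$ follows immediately from contractibility of $\bar X$: indeed $\bar X = \{u\in H^1(\Sigma): \int_\Sigma u\,dV_g = 0,\ \int_\Sigma \tilde K e^u\,dV_g > 0\}$ is convex in the relevant sense — more precisely it is star-shaped, since for a fixed $u_0\in \bar X$ the straight-line homotopy $t\mapsto tu + (1-t)u_0$ keeps the mean zero and, as $\int_\Sigma \tilde K e^{(\cdot)}$ is log-convex along lines (or simply by noting $\tilde K^+ e^u$ dominates and one can push toward a fixed bump function concentrated where $\tilde K>0$), stays in $\bar X$; hence $\bar X$ is contractible, and so is its retract $I_\lambda^b$.

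The main obstacle I expect is the non-completeness of $\bar X$ and the absence of the Palais--Smale condition: one cannot simply quote the classical deformation lemma, and must instead carefully justify that the flow from \cite{Lucia} — whose good behaviour rests on compactness of solutions of the approximating equations \eqref{equationbis}, supplied by Theorem \ref{thmcompa} — both stays inside $\bar X$ and reaches $I_\lambda^b$ in finite time from arbitrarily high levels. The cited references (\cite{Lucia}, and \cite[Corollary 2.8]{Mal2}) handle exactly this mechanism in closely related settings, so the argument amounts to checking that their hypotheses transfer to the present sign-changing, singular case, which is the content of Remark \ref{rem:flow} and the preceding discussion.
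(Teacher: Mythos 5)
Your overall strategy --- using Theorem \ref{thmcompa} to bound the critical levels of $I_\lambda$ uniformly, then deforming $\bar X$ onto $I_\lambda^b$ via the Lucia--Malchiodi flow (with the invariance of $\bar X$ noted in Remark \ref{rem:flow}), citing \cite[Corollary 2.8]{Mal2} --- is exactly what the paper does, and that part of the argument is sound.

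The flawed step is your justification that $\bar X$ is contractible. You claim $\bar X$ is star-shaped because $u\mapsto\int_\Sigma\tilde K e^u\,dV_g$ is ``log-convex along lines.'' This fails twice over. First, log-convexity of $F(u)=\int\tilde K e^u$ along a segment gives an \emph{upper} bound, $F((1-t)u+tu_0)\le F(u)^{1-t}F(u_0)^t$, whereas to stay in $\bar X=\{F>0\}$ you need a \emph{lower} bound --- log-concavity would be useful, log-convexity is not. Second, the H\"older estimate that yields log-convexity requires $\tilde K\ge 0$; for sign-changing $\tilde K$ the functional $F$ can be negative along the segment, so $\log F$ is not even defined, and $\bar X$ need not be star-shaped about a generic $u_0$. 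A correct argument must be more careful: e.g.\ take $\varphi$ equal to a constant $-c$ outside a ball $B'$ compactly contained in $\{\tilde K>0\}$ and $\ge -c$ inside $B'$ with $\int_\Sigma\varphi\,dV_g=0$; then $e^{s\varphi}\ge e^{-sc}$ everywhere with equality on $\{\tilde K\le 0\}$, so $\int\tilde K e^{u+s\varphi}\ge e^{-sc}\int\tilde K e^u>0$ for all $s\ge 0$, and one contracts from the far end of this ray. The paper is admittedly terse on this point (the cited \cite{Mal2} treats $K>0$, where $\bar X$ is the whole hyperplane and contractibility is trivial), so this gap is worth filling, but the argument you propose to fill it with does not work.
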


\subsection{Morse inequalities for $I_\lambda$}
\

The aim of this subsection is to prove a Morse-theoretical result for $I_\lambda$, which will be crucial to get the multiplicity estimates of Theorem \ref{thm:multiplicitygeneral} and Theorem \ref{thm:multiplicity8pi16pi}.

\begin{proposition}\label{prop:genericallyMorse}
Let $\ell\in\{0,\ldots,m\}$ and let us assume $\alpha_1,\ldots,\alpha_\ell>0$.
If $\lambda\in(8\pi,+\infty)\setminus\Lambda$, then for a generic choice of the function $K$, $g$ (namely for $(K,g)$ in an open and dense subset of $\mathcal{K}_{\ell}\times \mathcal M$) there exists $b=b(K,g)>0$ such that
\begin{itemize}
\item $I_\lambda^b\equiv I^b_{\lambda,K,g}$ is a retract of $\bar X\equiv \bar X_{K,g}$,
\item any solution $u\in I^b_{\lambda,K,g}$ of \ref{equation} is nondegenerate,
\end{itemize}
where to emphasize the dependence on $K$ and $g$ we write

\[
\bar X_{K,g}=\{u\in H^1_g(\Sigma)\,:\,\int_\Sigma u \,dV_g=0,\:\:\int_\Sigma K e^{-h_m}e^u dV_g>0\}
\]
and

\[
I^b_{\lambda,K,g}=\{u\in \bar X_{K,g}\,:\,\frac12\int_\Sigma |\nabla u|^2dV_g-\lambda\log\int_\Sigma K e^{-h_m}e^u dV_g\leq b\}
\]
with $h_m$ defined in \eqref{accame}.
\end{proposition}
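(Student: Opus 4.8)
The statement combines two ingredients: the transversality/genericity part (generic nondegeneracy of solutions) and the topological part (the high sublevel is a retract of $\bar X$). The plan is to first settle the second point using the already established compactness theorem, and then to build the genericity of nondegeneracy by a transversality argument applied to the whole family of equations parametrised by $(K,g)$.

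\emph{Step 1: the retraction statement.} By Theorem~\ref{thmcompa}, for $\lambda\notin\Lambda$ the set of solutions of \ref{equation} is compact, hence $I_\lambda$ is uniformly bounded on it; combined with Lemma~\ref{lemma:highsublevels}, for any $b>0$ large enough $I_\lambda^b$ is a retract of $\bar X$ and hence contractible. The only subtlety is that we need the \emph{same} $b$ to work for the nondegeneracy claim, so we must first understand the genericity statement and only then fix $b$ larger than the energy of all solutions (which is bounded independently of the generic perturbation, again by the uniform a priori bound).

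\emph{Step 2: transversality.} This is the heart of the matter. Fix $\lambda\in(8\pi,+\infty)\setminus\Lambda$. Consider the map $F:\bar X_{K,g}\times\mathcal K_\ell\times\mathcal M\to (\text{tangent space})$, $F(u,K,g)= -\Delta_g u-\lambda\big(\tilde K e^u/\int_\Sigma \tilde K e^u dV_g-1/|\Sigma|\big)$, suitably interpreted in appropriate Banach spaces (e.g. $u\in C^{2,\alpha}$ when all $\alpha_j>0$, so that solutions are classical by the Remark in the excerpt; one has to be slightly careful near the $p_j$'s but they lie away from $\Gamma$ and the weight $e^{-h_m}$ is fixed and smooth outside the $p_j$'s). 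The key computation is that $F$ is transverse to $0$: given a solution $(u,K,g)$, one shows that the partial derivative $D_{(u,K)}F(u,K,g)$ is surjective. The derivative in $u$ is the linearised operator $L_u$, a compact perturbation of $-\Delta_g$, so it is Fredholm of index $0$; surjectivity of the full derivative then follows once one checks that any $w$ in the (finite-dimensional) cokernel of $L_u$ can be killed by an admissible variation of $K$. Concretely, if $w\neq0$ is $L^2$-orthogonal to the range, one produces a perturbation $\dot K$ supported in $\Sigma^+$ away from the $p_j$'s and from $\Gamma$ (so that $\dot K\in T\mathcal K_\ell$, i.e. \ref{H1}, \ref{H2} are preserved) for which $\langle D_K F\cdot\dot K, w\rangle\neq0$; this uses that $e^u>0$ and a unique continuation argument showing $w$ cannot vanish on an open subset of $\Sigma^+$. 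Then the abstract transversality theorem (Sard--Smale, in the form of \cite{Quinn}/\cite{SautTemam}, or Uhlenbeck's genericity theorem) gives that for $(K,g)$ in a residual set, hence (after intersecting with the open condition coming from compactness and a countable exhaustion) in an open and dense subset of $\mathcal K_\ell\times\mathcal M$, all solutions of \ref{equation} are nondegenerate.

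\emph{Step 3: conclusion.} For such generic $(K,g)$, the set of solutions is finite (compact, and discrete by nondegeneracy via the implicit function theorem), so there is $b=b(K,g)$ exceeding the maximal energy of the solutions; enlarging $b$ if necessary, Step~1 applies and $I^b_{\lambda,K,g}$ is a retract of $\bar X_{K,g}$ while containing all (nondegenerate) solutions in $\{I_\lambda\le b\}$.

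\emph{Main obstacle.} The delicate point is the surjectivity in Step~2, i.e. showing that perturbations of $K$ \emph{within the admissible class} $\mathcal K_\ell$ (preserving the sign-changing structure, the regularity of $\Gamma$, and the positions of the $p_j$'s relative to $\Sigma^\pm$) are enough to span the cokernel of the linearised operator. One must be careful that variations are only allowed away from $\Gamma$ and the $p_j$'s, and that the functional framework handles the conical weight $e^{-h_m}$ correctly; the fact that $\Sigma^+\setminus\{p_1,\dots,p_\ell\}$ is open and nonempty, together with unique continuation for $L_u$, is what saves the argument. The genericity in the metric $g$ is comparatively standard (it enters through $\Delta_g$, $dV_g$ and the Green function $h_m$) and can be treated by the same transversality scheme or simply absorbed into the perturbation of $K$.
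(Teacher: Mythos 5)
Your proposal follows essentially the same route as the paper: use Theorem~\ref{thmcompa} together with Lemma~\ref{lemma:highsublevels} for the retraction part, and a Saut--Temam-type transversality theorem for generic nondegeneracy; the paper defers the transversality computation to \cite{DeM2} while you sketch its ingredients (Fredholmness of the linearisation, killing the cokernel by admissible perturbations of $K$ supported in $\Sigma^+$, unique continuation). The one place where the paper is more careful than your ``residual, hence open and dense after intersecting with the open condition from compactness'' phrasing is the local-to-global structure: it first fixes a small neighbourhood $\mathcal H_\delta\times\mathcal G_\delta$ of an arbitrary $(\bar K,\bar g)$ on which the Sobolev spaces agree, compactness gives a uniform a priori bound $R$, the retraction level $b$ can be chosen uniformly, and the constraint $\int_\Sigma (\bar K+K)e^{-h_m}e^u\,dV_{\bar g+g}>0$ holds on $I^b$; openness and density are then established inside this neighbourhood and the conclusion propagates since the base point was arbitrary.
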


\begin{proof}
Let us fix $(\bar K,\bar g)\in \mathcal K_\ell\times\mathcal M$.

Next, we introduce the Banach space $\mathcal S$ of all $C^{2,\alpha}$ symmetric matrices on $\Sigma$. The set $\mathcal{M}$ of all $C^{2,\alpha}$ Riemannian metrics on $\Sigma$ is an open subset of $\mathcal S$.\\
It is easy to verify that for small $\delta>0$, and any $g\in\mathcal G_\delta:=\{g\in\mathcal S\,:\,\|g\|_{C^{2,\alpha}}<\delta\}$, $\bar g+g$ is a Riemannian metric and the sets $H^1_{\bar g+g}(\Sigma)$, $L^2_{\bar g+g}(\Sigma)$, $L^1_{\bar g+g}(\Sigma)$ coincide respectively with $H^1_{g}(\Sigma)$, $L^2_{g}(\Sigma)$, $L^1_{g}(\Sigma)$ and the two norms are equivalent.

Being $\bar K\in\mathcal K_\ell$, it satisfies \ref{H1}, \ref{H2}. Thus, it is not hard to see that for $\delta>0$ small enough $\bar K+K$ satisfies \ref{H1}, \ref{H2} for any $K\in\mathcal{H}_\delta:=\{h\in C^{2,\alpha}(\Sigma)\,:\,\|h\|_{C^{2,\alpha}(\Sigma)}<\delta\}$.

Furthermore, by Theorem \ref{thmcompa}, it suffices to take a smaller $\delta>0$ so that there exists $R>0$ such that for any $(K,g)\in\mathcal{H}_\delta\times\mathcal{G}_\delta$ all the critical points (with zero mean value) of $I_{\lambda, \bar K+K, \bar g+g}$ are contained in the ball $B_0(R)\subset H^1_{\bar g}(\Sigma)$.\\

Taking a smaller $\delta>0$, if necessary, we have by Lemma \ref{lemma:highsublevels} and Theorem \ref{thmcompa} that there exists $b>0$ such that the sublevel $I^b_{\lambda,\bar K+K,\bar g+g}$ is a retract of $\bar{X}_{\bar K+K,\bar g+g}$ for any $(K,g)\in \mathcal H_\delta\times \mathcal G_\delta$.\\
Finally, for any $u\in I^b_{\lambda,\bar K,\bar g}$ $\int_\Sigma \bar K e^{-h_m}e^u\,dV_{\bar g}\geq e^{-\frac{b}{\lambda}}$, so we can also assume that if $u\in I^b_{\lambda,\bar K,\bar g}$, then $\int_\Sigma (\bar K+K) e^{-h_m}e^u\,dV_{\bar g+g}>0$ for any $(K,g)\in \mathcal H_\delta\times \mathcal G_\delta$.

Once $\delta$ is fixed in this way it is possible to argue as in \cite{DeM2}, where a transversality Theorem, obtained in \cite{SautTemam}, is applied to deduce that the following set is an open and dense subset of $\mathcal H_\delta\times \mathcal G_\delta$

\[
\left\{
(K,g)\in\mathcal H_\delta\times \mathcal G_\delta\,:\,\begin{array}{l}\mbox{\small{any $u\in I^b_{\lambda,\bar K+K,\bar g+g}$ solution of the equation}}\\
 \mbox{\small{$-\Delta_{\bar g+g}u=\lambda\left(\frac{(\bar K+K)e^{-h_m}e^u}{\int_\Sigma (\bar K+K)e^{-h_m}e^u dV_{\bar g+g}}-\frac{1}{\int_\Sigma dV_{\bar g+g}}\right)$\;\; is nondegenerate}}
 \end{array}
\right\}.
\]
Since this holds for any choice of $(\bar K,\bar g)$ the thesis follows.

\end{proof}

As recalled in the previous subsection we do not know whether $I_\lambda$ satisfies the $(PS)$ condition or not, thus Theorem \ref{MorseIneq} can not be directly applied. However, as already pointed out in \cite{BarDemMal}, the $(PS)$-condition can be replaced by the request that appropriate deformation lemmas hold for the functional.\\
In particular a flow defined by Malchiodi in \cite{Mal2} allows to adapt to $I_\lambda$ the classical deformation lemmas \cite[Lemma 3.2 and Theorem 3.2]{Chang} needed so that Theorem \ref{MorseIneq} can be applied for $H=\bar X$ and $I=I_\lambda$. It is worth to point out that, even if the flow is defined for $K$ positive, arguing as in Remark \ref{rem:flow} it is not hard to check that the same construction applies also in the sign-changing case.\\
In conclusion the following result holds true.
\begin{proposition}\label{prop:MorseIlambda}
Let $\ell\in\{0,\ldots,m\}$ and let us assume $\alpha_1,\ldots,\alpha_\ell>0$. If $\lambda\notin\Lambda$, $a$, $b$ are regular values of $I_\lambda$ and all the critical points in $\{a\leq I_\lambda\leq b\}$ are nondegenerate, then

\[
\#\{\textrm{\small{critical points of $I_\lambda$ in $\{a\leq I_\lambda\leq b\}$}}\}\geq  \sum_{q \geq 0}   \dim(H_q(I_\lambda^b,I_\lambda^a;\Z_2)).
\]
\end{proposition}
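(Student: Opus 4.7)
The plan is to adapt the classical proof of the Morse inequalities (as given in \cite[Theorem 4.3]{Chang}) to the functional $I_\lambda$, circumventing the lack of the Palais-Smale condition. The classical argument invokes $(PS)$ only to guarantee three facts: (a) the critical set inside $\{a\leq I_\lambda\leq b\}$ is finite when all critical points are nondegenerate; (b) a first deformation lemma, according to which $I_\lambda^{c-\varepsilon}$ is a strong deformation retract of $I_\lambda^{c+\varepsilon}$ whenever $[c-\varepsilon,c+\varepsilon]$ contains no critical values; (c) a local handle-attachment description of the topological change of the sublevels across a critical value. My task is to verify (a)--(c) in our setting.

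For (a), Theorem \ref{thmcompa} (together with $\lambda\notin\Lambda$) shows that the set of solutions of \ref{equation} is compact in $\bar X$. Since nondegenerate critical points are isolated, only finitely many of them can lie in $\{a\leq I_\lambda\leq b\}$. Fact (b) is exactly Lemma \ref{lemma:deformation}, which was established via the deformation lemma of \cite{Lucia} combined with our compactness result. For (c), one needs to argue locally: given a nondegenerate critical point $u_0$ of $I_\lambda$ at level $c$, with Morse index $q$, there exists a neighborhood $U\subset \bar X$ of $u_0$ and $\varepsilon>0$ such that the pair $(I_\lambda^{c+\varepsilon}\cap U,\,I_\lambda^{c-\varepsilon}\cap U)$ is, up to strong deformation retract, homeomorphic to $(D^q\times D,\,\partial D^q\times D)$ for a contractible $D$, so that
\[
H_*(I_\lambda^{c+\varepsilon}\cap U,\,I_\lambda^{c-\varepsilon}\cap U;\Z_2)\cong \Z_2 \text{ in degree } q,\quad 0 \text{ otherwise.}
\]
This is a purely local assertion and is proved via the standard Morse lemma applied to $I_\lambda\in C^2(\bar X;\R)$ at a nondegenerate critical point; no global compactness or $(PS)$ is needed.

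To glue these local pictures into the global Morse inequality we need a pseudo-gradient-type flow on $\bar X$ that decreases $I_\lambda$, stays away from critical points outside a given neighborhood and, crucially, is positively invariant on $\bar X$. Such a flow was constructed by Malchiodi in \cite{Mal2} for the case $K>0$; as recalled in Remark \ref{rem:flow}, since $I_\lambda(u)\to+\infty$ as $u$ approaches $\partial\bar X$, the same construction applies verbatim to our sign-changing situation. With this flow, and using (a)--(c), one builds a filtration
\[
I_\lambda^a=V_0\subset V_1\subset\cdots\subset V_r=I_\lambda^b
\]
in which each $V_{i+1}$ is obtained from $V_i$ (up to homotopy) by attaching one $q_{i,j}$-cell for every critical point $u_{i,j}$ at the intermediate level $c_i\in[a,b]$, where $q_{i,j}$ is its Morse index.

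Once the filtration is established, the Morse inequality
\[
\#\{\text{critical points of }I_\lambda\text{ in }\{a\leq I_\lambda\leq b\}\}\geq \sum_{q\geq 0}\dim H_q(I_\lambda^b,I_\lambda^a;\Z_2)
\]
follows by standard subadditivity of the ranks of relative homology groups along the filtration (via the long exact sequences of the pairs $(V_{i+1},V_i)$, using that each $H_*(V_{i+1},V_i;\Z_2)$ is concentrated in the indices of the critical points at level $c_i$). The main obstacle is not a conceptual one but the careful verification that Malchiodi's flow, designed for positive potentials, remains well-defined and $\bar X$-invariant in the presence of sign-changing $K$ and conical singularities; this is addressed by the coercivity of $I_\lambda$ near $\partial\bar X$, together with the cut-off mechanism in the original construction, so the adaptation is routine.
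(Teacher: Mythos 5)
Your proposal is correct and follows essentially the same route as the paper: both arguments replace the missing $(PS)$ condition by Malchiodi's flow from \cite{Mal2} (adapted to sign-changing $K$ as in Remark~\ref{rem:flow}, using that $I_\lambda\to+\infty$ near $\partial\bar X$) so that the classical deformation lemmas of \cite{Chang} apply and Theorem~\ref{MorseIneq} can be invoked with $H=\bar X$, $I=I_\lambda$. The only difference is expository: you spell out the three roles that $(PS)$ plays in the classical proof (finiteness of the critical set via Theorem~\ref{thmcompa}, the noncritical-interval deformation Lemma~\ref{lemma:deformation}, and the purely local handle description), whereas the paper simply cites the relevant references.
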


\section{On the homology groups of barycenter sets}\label{sectcomphomol}

\setcounter{equation}{0}

In this section we compute the dimension of the homology groups of some spaces of formal barycenters which have been introduced in the previous section.

Keeping the notation of Proposition \ref{prop:lowsublevelsgeneral}, we consider the space

\[
Z_{N,M}= X_N\amalg Y_M,
\]
where $X_N=\amalg_{i=1}^N B^{g_i}$ and $Y_M=\{y_1,\ldots,y_M\}$, with $g_i$ defined in \eqref{concomp}.
For $k\in \N$, $N,M\in\N\cup\{0\}$, with $N+M\geq1$, and $q\in\N\cup\{0\}$ we set
\beq\label{d_q}
d_q(k,N,M)=\dim(\HTI_q(Bar_k(Z_{N,M}));\Z_2),
\end{equation}
with the convention that $d_q(k,N,M)=0$ if $q<0$.

\begin{proposition}\label{thm:formula}
Let $k\in \N$, $N,M\in\N\cup\{0\}$, with $N+M\geq1$, and $q\in\N\cup\{0\}$, then\\
if \quad $k+1-M \leq N$,

\[
d_q(k,N,M)=\left\{
\begin{array}{ll}
\mybinom{N+M-1}{N+M-p}\mathlarger{\sum}\limits_{\tiny\begin{array}{c}a_1+\ldots+a_N=k-p+1\\a_i\geq0\end{array}}\!\!\!\!\!\!\!\!\!\!\!\!\!\!\!\!\!\!\!s_{a_1,g_1}\ldots s_{a_N,g_N} &\mbox{\footnotesize \quad if\;\, $q=2k-p\;\;(1\leq p\leq k+1)$}\\
0 & \mbox{\quad\footnotesize otherwise;}
\end{array}\right.
\]
if \quad $k+1-M \geq N$,

\[
d_q(k,N,M)=\left\{
\begin{array}{ll}
\mybinom{N+M-1}{N+M-p}\mathlarger{\sum}\limits_{\tiny\begin{array}{c}a_1+\ldots+a_N=k-p+1\\a_i\geq0\end{array}}\!\!\!\!\!\!\!\!\!\!\!\!\!\!\!\!\!\!\!s_{a_1,g_1}\ldots s_{a_N,g_N} &\mbox{\footnotesize \quad if\;\, $q=2k-p\;\;(1\leq p\leq N)$}\\
\mybinom{N+M-s}{M-s}\mathlarger{\sum}\limits_{\tiny\begin{array}{c}a_1+\ldots+a_N=k-N-s+1\\a_i\geq0\end{array}}\!\!\!\!\!\!\!\!\!\!\!\!\!\!\!\!\!\!\!s_{a_1,g_1}\ldots s_{a_N,g_N} &\mbox{\footnotesize \quad if\;\, $q=2k-N-s\;\;(1\leq s\leq M)$}\\
0 & \mbox{\quad\footnotesize otherwise;}
\end{array}\right.
\]
where $s_{a,g}=\binom{a+g-1}{g-1}$ and $g_i$ is defined in \eqref{concomp}. \\
Moreover we adopt the following convention: if $N=0$

\[
\mathlarger{\sum}\limits_{\tiny\begin{array}{c}a_1+\ldots+a_N=h\\a_i\geq0\end{array}}\!\!\!\!\!\!\!\!\!\!\!\!\!s_{a_1,g_1}\ldots s_{a_N,g_N}=\left\{\begin{array}{ll}
1\quad&\mbox{if $h=0$}\\
0&\mbox{if $h\neq 0$.}
\end{array}
\right.
\]\\
Notice that if $k+1-M=N$ the two formulas coincide.
\end{proposition}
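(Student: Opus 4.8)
The goal is to determine $d_q(k,N,M)=\dim\big(\HTI_q(Bar_k(Z_{N,M}));\Z_2\big)$, where by \eqref{concomp} the space $Z_{N,M}=X_N\amalg Y_M$ is a disjoint union of $N+M$ connected pieces: the bouquets $B^{g_1},\dots,B^{g_N}$ and the $M$ singletons $\{y_1\},\dots,\{y_M\}$. The plan is to apply Proposition \ref{prop:AKN} iteratively, peeling off one connected piece at a time, thereby reducing the computation to the reduced $\Z_2$-homology of the elementary spaces $Bar_j(B^g)$ and $Bar_j(\{y\})$, manipulated through the homology formulas for the wedge \eqref{formula homology wedge}, the smash product \eqref{formula homology smash}, the unreduced suspension \eqref{formula homology suspension} and the join \eqref{formula homology join}.

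First I would record the building blocks. Since $Bar_j(\{y\})$ is a point, $\HTI_\ast(Bar_j(\{y\});\Z_2)=0$ and its suspension is contractible, so that every term of Proposition \ref{prop:AKN} involving a singleton is a point or a cone; peeling off a singleton therefore reduces (for connected $C$, with the general case handled below) to the simple formula $\HTI_q(Bar_k(C\amalg\{y\}))\cong\HTI_q(Bar_k(C))\oplus\HTI_{q-1}(Bar_{k-1}(C))$. For the bouquets I would use that $Bar_j(S^1)\simeq S^{2j-1}$, from which the wedge structure $B^g=\bigvee_{l=1}^g S^1$ gives that $\HTI_\ast(Bar_j(B^g);\Z_2)$ is concentrated in degree $2j-1$ with $\dim\HTI_{2j-1}(Bar_j(B^g);\Z_2)=s_{j,g}=\binom{j+g-1}{g-1}$, i.e. $Bar_j(B^g)$ has the $\Z_2$-homology of a wedge of $s_{j,g}$ copies of $S^{2j-1}$. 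I would also record the generating-function identity $\sum_{j\geq0}s_{j,g}t^j=(1-t)^{-g}$, which yields $\sum_{a_1+\dots+a_N=n,\,a_i\geq0}\prod_i s_{a_i,g_i}=s_{n,\,g_1+\dots+g_N}$ for the bouquet part.

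Next I would run an induction on the number of pieces: at each step write $Z=C\amalg D$ with $D$ a single bouquet (handled by the full Proposition \ref{prop:AKN}) or a singleton (handled by the formula above) and $C$ the union of the pieces already processed, feeding in the homology known by induction. To make this legitimate when $C$ is itself disconnected one invokes the routine extension of Proposition \ref{prop:AKN} to an arbitrary $C$ and a connected $D$ — the filtration of $Bar_k(C\amalg D)$ by the mass carried on $C$, on which the proof in \cite{AKN} rests, does not use connectedness of $C$ — or else one always splits off a single bouquet, or the whole finite set $Y_M$ at once using that $Bar_j(Y_M)$ has the homology of the $(j-1)$-skeleton of the $(M-1)$-simplex. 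Inserting the building-block homologies into \eqref{formula homology wedge}--\eqref{formula homology join}, each surviving summand is a suspension of a smash product of top classes of various $Bar_{a_i}(B^{g_i})$'s; its degree is controlled by the number $p$ of pieces carrying mass, through $q=2k-p$ with the mass distributed among the bouquets as $a_1+\dots+a_N=k-p+1$, except that once all $N$ bouquets are saturated the extra finite-set points contribute further summands in degrees $q=2k-N-s$. Collecting terms, the bouquet masses give the factor $\sum_{a_1+\dots+a_N=\,\cdot\,}\prod_i s_{a_i,g_i}$, while the choice of which pieces are active gives a binomial which, after a Vandermonde summation, consolidates into the coefficients $\binom{N+M-1}{N+M-p}$ and $\binom{N+M-s}{M-s}$ of the statement. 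The dichotomy between $k+1-M\leq N$ and $k+1-M\geq N$ is precisely whether the $k$ units of mass run out before all $N$ bouquets are used, and the two expressions agree when $k+1-M=N$.

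The step I expect to be the main obstacle is the combinatorial bookkeeping of this induction: faithfully propagating degree shifts and multiplicities through the iterated wedge decomposition of Proposition \ref{prop:AKN}, identifying the contribution of each summand, and recognizing the consolidated binomial-times-convolution form — while carefully isolating the degenerate cases ($N=0$, small $k$, and pieces receiving zero mass, which get deleted at the cost of one suspension, cf. the $\Sigma Bar_{k-1}$ summands). A secondary technical point is to justify the iterated application of Proposition \ref{prop:AKN} when the pieces being merged are disconnected.
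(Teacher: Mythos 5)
Your proposal follows essentially the same route as the paper: an induction that peels off one connected piece of $Z_{N,M}$ at a time via Proposition~\ref{prop:AKN}, using the base facts that $\HTI_\ast(Bar_j(B^g);\Z_2)$ is concentrated in degree $2j-1$ with dimension $s_{j,g}$ and that $Bar_k$ of a finite set is a simplex skeleton, and then tracking degrees and multiplicities through the wedge/smash/suspension/join formulas. Your observation that iterating Proposition~\ref{prop:AKN} requires applying it with $C$ disconnected is well taken and is a point the paper also tacitly uses without comment; otherwise the bookkeeping you describe is exactly what the paper carries out.
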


\begin{proof}\quad

\textit{Step $1$.}\quad The thesis holds true if $k=1$ or $N=0$. If $k=1$, $Bar_1(Z_{N,M})\cong Z_{N,M}$ and so by direct computation we 
have:

\[
d_q(1,N,M)=\left\{
\begin{array}{ll}
\sum\limits_{i=1}^N g_i\,\,(=\,\sum\limits_{i=1}^N s_{1,g_i})\quad&\mbox{\footnotesize if\;\, $q=1$}\\
N+M-1&\mbox{\footnotesize if \;\, $q=0$}\\
0&\mbox{\footnotesize otherwise.}
\end{array}
\right.
\]

If $N=0$, $Bar_k(Z_{0,M})$ is the $(k-1)$-skeleton of a $(M-1)$-symplex and so the following formula holds

\[
d_q(k,0,M)
=\left\{
\begin{array}{ll}
\mybinom{M-1}{k}&\mbox{\footnotesize if\;\, $q=k-1$}\\
0&\mbox{\footnotesize otherwise,}
\end{array}
\right.
\]
where we adopt the convention that $\binom{a}{b}=0$ if $a<b$.

\textit{Step $2$.}\quad The thesis holds true if $M=0$ for any $k\geq2$, $N\geq1$: that is, 

\beq\label{formula step2}
d_q(k,N,0)=\left\{
\begin{array}{ll}
\mybinom{N-1}{N-p}\mathlarger{\sum}\limits_{\tiny\begin{array}{c}a_1+\ldots+a_N=k-p+1\\a_i\geq0\end{array}}\!\!\!\!\!\!\!\!\!\!\!\!\!\!\!\!\!\!\!s_{a_1,g_1}\ldots s_{a_N,g_N} &\mbox{\footnotesize \quad if\;\, $q=2k-p\;\;(1\leq p\leq \min\{k+1,N\})$}\\
0 & \mbox{\footnotesize\quad otherwise.}
\end{array}\right.
\end{equation}

We will demonstrate \eqref{formula step2} by induction on $N$, for any fixed $k\geq2$.\\
If $N=0$, the formula holds by Step 1. Now, assume by induction that \eqref{formula step2} holds true for a certain $N$ and let us show its validity for $N+1$. Being

\[
X_{N+1}=X_N\amalg B^{g_{N+1}},
\]
by Proposition \ref{prop:AKN}, \eqref{formula homology smash} and \eqref{formula homology suspension} we get

\beq\label{2 step 2}
\begin{split}
&d_q(k,N+1,0)=d_q(k,N,0)+d_{q-1}(k-1,N,0)+\dim(\HTI_q(Bar_k(B^{g_{N+1}})))+\dim(\HTI_{q-1}(Bar_{k-1}(B^{g_{N+1}})))\\
&+\sum_{\ell=1}^{k-1}\dim(\HTI_q(Bar_{k-\ell}(X_N)\ast Bar_\ell(B^{g_{N+1}})))+\sum_{\ell=2}^{k-1}\dim(\HTI_q(\Sigma Bar_{k-\ell}(X_N)\ast Bar_{\ell-1}(B^{g_{N+1}}))),
\end{split}
\end{equation}
where the homology groups are intended with coefficient in $\Z_2$.
Let us compute all the terms in \eqref{2 step 2}.\\
The first two can be obtained using the inductive assumption. Next, again by the computations in \cite[Proposition 3.2]{BarDemMal}, we know that

\beq\label{3 step 2}
\dim(\HTI_q(Bar_k(B^{g_{N+1}})))=\left\{
\begin{array}{ll}
s_{a_{N+1},g_{N+1}}&\mbox{\footnotesize \quad if\;\, $q=2k-1$}\\
0&\mbox{\footnotesize\quad otherwise, }
\end{array}
\right.
\end{equation}
and so

\beq\label{4 step 2}
\dim(\HTI_{q-1}(Bar_{k-1}(B^{g_{N+1}})))=\left\{
\begin{array}{ll}
s_{a_{N+1},g_{N+1}}&\mbox{\footnotesize \quad if\;\, $q=2k-2$}\\
0&\mbox{\footnotesize\quad otherwise. }
\end{array}
\right.
\eeq
Moreover, by \eqref{formula homology join}, using \eqref{3 step 2} and the inductive assumption we have that

\beq
\label{5 step 2}
\begin{split}
&\sum_{\ell=1}^{k-1}\dim(\HTI_q(Bar_{k-\ell}(X_N)\ast Bar_\ell(B^{g_{N+1}})))=\\
&\qquad\qquad
=\left\{
\begin{array}{ll}
\mybinom{N-1}{N-p}\!\!\!\!\mathlarger{\sum}\limits_{\tiny\begin{array}{c}a_1+\ldots+a_N+\ell=k-p+1\\a_i\geq0,\;\ell\geq 1\end{array}}\!\!\!\!\!\!\!\!\!\!\!\!\!\!\!\!\!\!\!s_{a_1,g_1}\ldots s_{a_N,g_N}\,s_{a_{N+1},\ell} &\mbox{\footnotesize\quad  if\;\, $q=2k-p\;\;(1\leq p\leq N)$}\\
0 & \mbox{\footnotesize\quad otherwise,}
\end{array}\right.
\end{split}
\eeq
and

\beq
\label{6 step 2}
\begin{split}
&\sum_{\ell=2}^{k-1}\dim(\HTI_q(\Sigma Bar_{k-\ell}(X_N)\ast Bar_{\ell-1}(B^{g_{N+1}})))=\\
&\qquad\qquad
=\left\{
\begin{array}{ll}
\mybinom{N-1}{N-p+1}\!\!\!\!\mathlarger{\sum}\limits_{\tiny\begin{array}{c}a_1+\ldots+a_N+\ell=k-p+1\\a_i\geq0,\;\ell\geq2\end{array}}\!\!\!\!\!\!\!\!\!\!\!\!\!\!\!\!\!\!\!s_{a_1,g_1}\ldots s_{a_N,g_N}\,s_{a_{N+1},\ell-1}  &\mbox{\footnotesize if\;\, $q=2k-p\;\;(2\leq p\leq N+1)$}\\
0 & \mbox{\footnotesize otherwise.}
\end{array}\right.
\end{split}
\eeq
In conclusion, combining \eqref{2 step 2}, \eqref{3 step 2}, \eqref{4 step 2}, \eqref{5 step 2} and \eqref{6 step 2} we obtain that

\[
d_q(k,N+1,0)=\left\{
\begin{array}{ll}
\mybinom{N}{N+1-p}\mathlarger{\sum}\limits_{\tiny\begin{array}{c}a_1+\ldots+a_{N+1}=k-p+1\\a_i\geq0\end{array}}\!\!\!\!\!\!\!\!\!\!\!\!\!\!\!\!\!\!\!s_{a_1,g_1}\ldots s_{a_{N+1},g_{N+1}} &\mbox{\footnotesize \quad if\;\, $q=2k-p\;\;(1\leq p\leq \min\{k+1,N+1\})$}\\
0 & \mbox{\footnotesize\quad otherwise,}
\end{array}\right.
\]
so \eqref{formula step2} holds true for $N+1$ and this completes the proof of \eqref{formula step2}.

\textit{Step $3$.}\quad Conclusion.\\

We will prove the formula by induction on $M$, with $k\geq2$ and $N\geq1$ fixed.\\
If $M=0$ the thesis is true by \textit{Step $2$}. Now, let us suppose that \eqref{formula step2} holds for $M$ and we prove that then it is also true for $M+1$.\\
Being

\[
Z_{N,M+1}=Z_{N,M}\amalg\{y_{M+1}\},
\]
and $\HTI_*(Bar_k(\{y_{M+1}\}))=0$, by \eqref{formula homology smash} and \eqref{formula homology suspension} we get

\[
d_q(k,N,M+1)=d_q(k,N,M)+d_{q-1}(k-1,N,M).
\]
Hence by the inductive assumption we can compute $d_q(k,N,M+1)$, obtaining
that

\bigskip 
if \quad $k+1-(M+1)  \leq N$

\[
d_q(k,N,M+1)=\left\{
\begin{array}{ll}
\mybinom{N+(M+1)-1}{N+(M+1)-p}\mathlarger{\sum}\limits_{\tiny\begin{array}{c}a_1+\ldots+a_N=k-p+1\\a_i\geq0\end{array}}\!\!\!\!\!\!\!\!\!\!\!\!\!\!\!\!\!\!\!s_{a_1,g_1}\ldots s_{a_N,g_N} &\mbox{\footnotesize \quad if\;\, $q=2k-p\;\;(1\leq p\leq k+1)$}\\
0 & \mbox{\quad\footnotesize otherwise;}
\end{array}\right.
\]
if \quad $k+1-(M+1) \geq N$

\[
d_q(k,N,M+1)=\left\{
\begin{array}{ll}
\mybinom{N+(M+1)-1}{N+(M+1)-p}\mathlarger{\sum}\limits_{\tiny\begin{array}{c}a_1+\ldots+a_N=k-p+1\\a_i\geq0\end{array}}\!\!\!\!\!\!\!\!\!\!\!\!\!\!\!\!\!\!\!s_{a_1,g_1}\ldots s_{a_N,g_N} &\mbox{\footnotesize \quad if\;\, $q=2k-p\;\;(1\leq p\leq N)$}\\
\mybinom{N+(M+1)-s}{(M+1)-s}\mathlarger{\sum}\limits_{\tiny\begin{array}{c}a_1+\ldots+a_N=k-N-s+1\\a_i\geq0\end{array}}\!\!\!\!\!\!\!\!\!\!\!\!\!\!\!\!\!\!\!s_{a_1,g_1}\ldots s_{a_N,g_N} &\mbox{\footnotesize \quad if\;\, $q=2k-N-s\;\;(1\leq s\leq M+1)$}\\
0 & \mbox{\quad\footnotesize otherwise.}
\end{array}\right.
\]
So the formula holds for $M+1$ and this concludes the proof.
\end{proof}

\begin{lemma}\label{lemma:formula}
Let $N$, $M\in\N\cup\{0\}$, $N+M\geq1$ and let $W_{N,M,J_{\lambda}}$ be the set defined in \eqref{WNMJ}, then

\[
\dim(\HTI_q(Bar_1(W_{N,M,J_\lambda});\Z_2))=
\left\{
\begin{array}{ll}
N+M-1&{q=0}\\
\sum_{i=1}^N g_i+|J_\lambda|&{q=1}\\
0&\mbox{otherwise.}
\end{array}
\right.
\]
\end{lemma}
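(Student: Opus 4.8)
The plan is to reduce the statement to an elementary computation of the singular homology of a disjoint union of bouquets of circles and isolated points. The starting point is that for any metric space $M$ one has a canonical homeomorphism $Bar_1(M)\cong M$, given by $\delta_x\mapsto x$ (the weak$^*$ topology on $Bar_1(M)$ restricts to the topology of $M$). Hence $Bar_1(W_{N,M,J_\lambda})\cong W_{N,M,J_\lambda}$, and it suffices to compute $\HTI_q(W_{N,M,J_\lambda};\Z_2)$.

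By its description in \eqref{WNMJ}, the set $W_{N,M,J_\lambda}$ is the disjoint union of the $N$ bouquets $B^{g_i+|J_{\lambda,A_i}|}$, one inside each non-contractible component $A_i$, of the $M_\lambda$ bouquets $B^{|J_{\lambda,C_h}|}$, one inside each contractible component $C_h$ meeting $J_\lambda$, and of the $M-M_\lambda$ isolated points $y_h$ with $h>M_\lambda$. In particular it has exactly $N+M$ path-connected components, so $\dim\HTI_0(W_{N,M,J_\lambda};\Z_2)=N+M-1$. For $q\geq1$ the reduced homology of a disjoint union is the direct sum of the reduced homologies of the pieces; an isolated point contributes nothing, while a bouquet $B^g$ of $g\geq1$ circles satisfies $\HTI_1(B^g;\Z_2)\cong\Z_2^{\,g}$ and $\HTI_q(B^g;\Z_2)=0$ for $q\geq2$ (this is elementary and is also the case $k=1$ of \cite[Proposition 3.2]{BarDemMal}). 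Since every $A_i$ is non-contractible we have $g_i\geq1$, and since $J_{\lambda,C_h}\neq\emptyset$ precisely for $h\leq M_\lambda$ all the bouquets occurring in $W_{N,M,J_\lambda}$ genuinely have at least one circle, so the formula applies to each of them; summing gives $\HTI_q(W_{N,M,J_\lambda};\Z_2)=0$ for $q\geq2$ and
\[
\dim\HTI_1(W_{N,M,J_\lambda};\Z_2)=\sum_{i=1}^N\bigl(g_i+|J_{\lambda,A_i}|\bigr)+\sum_{h=1}^{M_\lambda}|J_{\lambda,C_h}|.
\]

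To finish I would use that $J_\lambda\subset\Sigma^+=\coprod_{i=1}^N A_i\amalg\coprod_{h=1}^M C_h$, so that by \eqref{Jlambda} and \eqref{JlambdaAC}
\[
|J_\lambda|=\sum_{i=1}^N|J_{\lambda,A_i}|+\sum_{h=1}^M|J_{\lambda,C_h}|=\sum_{i=1}^N|J_{\lambda,A_i}|+\sum_{h=1}^{M_\lambda}|J_{\lambda,C_h}|,
\]
since $J_{\lambda,C_h}=\emptyset$ for $h>M_\lambda$. Plugging this into the displayed identity yields $\dim\HTI_1(W_{N,M,J_\lambda};\Z_2)=\sum_{i=1}^N g_i+|J_\lambda|$, which is the claim. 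There is no serious obstacle in this argument; the only points requiring a little care are verifying that no degenerate "bouquet of zero circles" appears (handled above by $g_i\geq1$ and $|J_{\lambda,C_h}|\geq1$ for $h\leq M_\lambda$) and keeping the index bookkeeping between $J_\lambda$, the $J_{\lambda,A_i}$ and the $J_{\lambda,C_h}$ consistent with \eqref{JlambdaAC}.
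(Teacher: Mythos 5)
Your argument is correct and follows the same route as the paper's proof: identify $Bar_1(W_{N,M,J_\lambda})$ with $W_{N,M,J_\lambda}$, read off the reduced homology of the disjoint union of bouquets and points, and simplify using $\sum_{i}|J_{\lambda,A_i}|+\sum_{h}|J_{\lambda,C_h}|=|J_\lambda|$. The only difference is that you spell out the elementary homology computation and the nondegeneracy of the bouquets, which the paper labels as "immediate."
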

\begin{proof}
Being $Bar_1(W_{N,M,J_\lambda})\cong W_{N,M,J_\lambda}$ it is immediate to see that

\[
\dim(\HTI_q(W_{N,M,J_\lambda};\Z_2))=
\left\{
\begin{array}{ll}
N+M-1&{q=0}\\
\sum_{i=1}^N (g_i+|J_{\lambda,A_i}|)+\sum_{h=1}^{M
} |J_{\lambda,C_h}|&{q=1}\\
0&\mbox{otherwise,}
\end{array}
\right.
\]
hence the thesis follows observing that

\[
\sum_{i=1}^N |J_{\lambda,A_i}|+\sum_{h=1}^{M} |J_{\lambda,C_h}|=|J_\lambda|,
\]
where $J_{\lambda,A_i}$ and $J_{\lambda,C_h}$ are defined in \eqref{JlambdaAC}.
\end{proof}

\section{Conclusion of the proofs of the main results}\label{sectproof}

\setcounter{equation}{0}

In order to prove our main results the following two Propositions will be of use.

\begin{proposition}\label{prop:nuova}
Let $\lambda\in(8k\pi,8(k+1)\pi)$, $k\in\N$, and assume \ref{H1}, \ref{H2}.\\
If $b>0$ is such that $I_\lambda^b$ is contractible and $\Sigma^+\simeq Z_{N,M}$, where $Z_{N,M}$ is defined in \eqref{ZNM}, then there exists $L>0$ sufficiently large so that

\[
\dim(H_{q+1}(I_\lambda^b,I_\lambda^{-L};\Z_2))\geq\dim(\tilde H_q(Bar_k(Z_{N,M});\Z_2))\qquad\mbox{for any $q\geq 0$.}
\]
\end{proposition}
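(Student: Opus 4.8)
The plan is to build a continuous map from $Bar_k(Z_{N,M})$ into the low sublevel $I_\lambda^{-L}$ which is a homotopy section of the projection $\Psi$ of Proposition \ref{prop:lowsublevelsgeneral}, and then to extract the homology estimate from the long exact sequence of the pair $(I_\lambda^b,I_\lambda^{-L})$. Throughout I set $Z:=Z_{N,M}$, which by \eqref{ZNM} is a compact subset of $\Sigma^+\setminus\{p_1,\ldots,p_\ell\}$, so that Lemma \ref{lemma:testfunctionsgeneral} applies to it.

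First I would fix $L>0$ large enough that the projection $\Psi:I_\lambda^{-L}\to Bar_k(Z_{N,M})$ of Proposition \ref{prop:lowsublevelsgeneral} is defined. By Lemma \ref{lemma:testfunctionsgeneral}(i), for every $\mu\geq\mu(L)$ the assignment $\sigma\mapsto\varphi_{\mu,\sigma}$ gives a continuous map $\varphi_\mu:Bar_k(Z)\to I_\lambda^{-L}$. The crucial step is to show that for $\mu$ large the composition $T_\mu:=\Psi\circ\varphi_\mu:Bar_k(Z)\to Bar_k(Z_{N,M})$ is homotopic to the identity. I would do this by proving that $T_\mu\to\mathrm{id}$ uniformly as $\mu\to+\infty$, via a contradiction-and-compactness argument: if this failed there would be $\eps>0$, $\mu_n\to+\infty$ and $\sigma_n\in Bar_k(Z)$ with $\dist(T_{\mu_n}(\sigma_n),\sigma_n)\geq\eps$; by compactness of $Bar_k(Z)$ we may assume $\sigma_n\to\sigma$, and by the concentration property of Lemma \ref{lemma:testfunctionsgeneral}(ii) the densities $\tfrac{\tilde K^+e^{\varphi_{\mu_n,\sigma_n}}}{\int_\Sigma \tilde K^+e^{\varphi_{\mu_n,\sigma_n}}\,dV_g}$ converge weakly to $\sigma$, so the last assertion of Proposition \ref{prop:lowsublevelsgeneral} forces $T_{\mu_n}(\sigma_n)=\Psi(\varphi_{\mu_n,\sigma_n})\to\sigma$; together with $\sigma_n\to\sigma$ this contradicts $\dist(T_{\mu_n}(\sigma_n),\sigma_n)\geq\eps$. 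Since $Bar_k(Z_{N,M})$ is a Euclidean Neighbourhood Retract (being a retract of $Bar_k(\overline{\Sigma^+})$, which is an ENR, via the pushforward of the retraction $r$ used in the proof of Proposition \ref{prop:lowsublevelsgeneral}), two maps into it that are sufficiently $C^0$-close are homotopic; hence $T_\mu\simeq\mathrm{id}$ for $\mu$ large, and I fix such a $\mu$.

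It then follows that $(\Psi)_*\circ(\varphi_\mu)_*=(T_\mu)_*=\mathrm{id}$ on $\tilde H_q(Bar_k(Z_{N,M});\Z_2)$, so $(\varphi_\mu)_*$ is injective and $\dim\tilde H_q(I_\lambda^{-L};\Z_2)\geq\dim\tilde H_q(Bar_k(Z_{N,M});\Z_2)$ for all $q\geq0$. On the other hand $-L<0<b$ gives $I_\lambda^{-L}\subseteq I_\lambda^b$, and since $I_\lambda^b$ is contractible the long exact sequence of the pair in reduced $\Z_2$-homology,
\[
\tilde H_{q+1}(I_\lambda^b;\Z_2)\to H_{q+1}(I_\lambda^b,I_\lambda^{-L};\Z_2)\to\tilde H_q(I_\lambda^{-L};\Z_2)\to\tilde H_q(I_\lambda^b;\Z_2),
\]
has vanishing outer terms, whence $H_{q+1}(I_\lambda^b,I_\lambda^{-L};\Z_2)\cong\tilde H_q(I_\lambda^{-L};\Z_2)$. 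Combining the two facts yields $\dim H_{q+1}(I_\lambda^b,I_\lambda^{-L};\Z_2)\geq\dim\tilde H_q(Bar_k(Z_{N,M});\Z_2)$ for every $q\geq0$, as desired. The only genuinely delicate point is the uniform convergence $T_\mu\to\mathrm{id}$, i.e.\ the existence of the homotopy $T_\mu\simeq\mathrm{id}$; once that is in place, the rest is routine homological bookkeeping.
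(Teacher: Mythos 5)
Your proposal follows the same route as the paper: construct the map $\sigma\mapsto\varphi_{\mu,\sigma}$ as a homotopy section of $\Psi$, deduce injectivity of the induced map on reduced $\Z_2$-homology, and transfer the estimate to the relative group via the long exact sequence of the pair using contractibility of $I_\lambda^b$. You supply more detail than the paper on why $\Psi\circ\varphi_\mu\simeq\mathrm{id}$ (uniform convergence plus the ENR property), which is the natural justification for the paper's terse assertion; the only tacit point is that the diagonal use of Lemma \ref{lemma:testfunctionsgeneral}(ii) with both $\mu_n\to\infty$ and $\sigma_n\to\sigma$ requires the weak convergence there to be locally uniform in $\sigma$, which holds by the explicit form of the test functions but is not spelled out in the lemma's statement.
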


\begin{proof}
By assumption $I_\lambda^b$ is contractible thus, from the exactness of the homology sequence,

\[
\ldots\to\tilde{H}_{q+1}(I_\lambda^{-L};\Z_2)\to\tilde{H}_{q+1}(I_\lambda^{b};\Z_2)\to{H}_{q+1}(I_\lambda^{b},I_\lambda^{-L};\Z_2)\to\tilde{H}_{q}(I_\lambda^{-L};\Z_2)\to\ldots
\]
we derive that

\[
\begin{array}{l}
{H}_{q+1}(I_\lambda^{b},I_\lambda^{-L};\Z_2)\cong\tilde{H}_q(I_\lambda^{-L};\Z_2),\qquad\quad \mbox{for any $q\geq0$},\\
H_0(I_\lambda^{b},I_\lambda^{-L};\Z_2)=0.
\end{array}
\]
Let us consider the continuous projection $\Psi$ introduced in Proposition \ref{prop:lowsublevelsgeneral} and the map

\[
\begin{array}{rccl}
j:& Bar_k(Z_{N,M})&\longrightarrow& I_\lambda^{-L}\\
&\sigma=\sum_{i=1}^k t_i\delta_{x_i}&\mapsto&\varphi_{\mu,\sigma},
\end{array}
\]
which is well defined by Lemma \ref{lemma:testfunctionsgeneral} $\emph{(i)}$ applied with $Z=Z_{N,M}$.\\
Then $\Psi\circ j$ is homotopically equivalent to the identity on $Bar_k(Z_{N,M})$. This fact follows from Proposition \ref{prop:lowsublevelsgeneral} and Lemma \ref{lemma:testfunctionsgeneral} $\emph{(ii)}$.\\
Hence, $\Psi_*\circ j_*=\Id_{| H_*(Bar_k(Z_{N,M}))}$ and so the desired conclusion follows by

\[
\dim(\tilde{H}_q(I_\lambda^{-L};\Z_2))\geq\dim(\tilde{H}_q(Bar_k(Z_{N,M});\Z_2)).
\]
\end{proof}

\begin{proposition}\label{prop:nuova2}
Let $\lambda\in(8\pi,16\pi)$ and assume \ref{H1}, \ref{H2}.\\
If $b>0$ is such that $I_\lambda^b$ is contractible and $\Sigma^+\simeq Z_{N,M}$, where $Z_{N,M}$ is defined in \eqref{ZNM}, then there exists $L>0$ sufficiently large so that

\[
\dim(H_{q+1}(I_\lambda^b,I_\lambda^{-L};\Z_2))\geq\dim(\tilde H_q(Bar_1(W_{N,M,J_\lambda});\Z_2))\qquad\mbox{for any $q\geq 0$,}
\]
where $W_{N,M,J_\lambda}$ is defined in \eqref{WNMJ}.
\end{proposition}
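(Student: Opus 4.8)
The plan is to mimic the proof of Proposition \ref{prop:nuova}, substituting the projection of Proposition \ref{prop:lowsublevels8pi16pi} and the test functions of Lemma \ref{lemma:testfunctions8pi16pi} for their analogues in the case $k\geq 1$ arbitrary. First I would use the contractibility of $I_\lambda^b$ together with the long exact sequence of the pair $(I_\lambda^b, I_\lambda^{-L})$ to get the isomorphism $H_{q+1}(I_\lambda^b, I_\lambda^{-L};\Z_2)\cong \tilde H_q(I_\lambda^{-L};\Z_2)$ for every $q\geq 0$ (and $H_0=0$), exactly as in the proof of Proposition \ref{prop:nuova}. Thus it suffices to produce a surjection (in $\Z_2$-homology) from $\tilde H_q(I_\lambda^{-L};\Z_2)$ onto $\tilde H_q(Bar_1(W_{N,M,J_\lambda});\Z_2)$.

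Next I would fix $L>0$ large enough that both Proposition \ref{prop:lowsublevels8pi16pi} and Lemma \ref{lemma:testfunctions8pi16pi} apply; the projection $\Psi:I_\lambda^{-L}\to Bar_1(W_{N,M,J_\lambda})$ is then available. To build the reverse map I would use the test functions $\varphi_{\mu,p,\alpha}$ from Lemma \ref{lemma:testfunctions8pi16pi}, choosing a fixed admissible exponent $\alpha\in(\tilde\alpha,\frac{\lambda}{8\pi}-1)$ and a compact set $W$: here $W$ should be the copy of $W_{N,M,J_\lambda}$ sitting inside $\Sigma^+\setminus J_\lambda$, i.e. the union of the bouquets $B^{g_i+|J_{\lambda,A_i}|}\subset A_i$, $B^{|J_{\lambda,C_h}|}\subset C_h$ and the points $y_h$, which by construction avoids $J_\lambda$. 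Since $Bar_1(W_{N,M,J_\lambda})\cong W_{N,M,J_\lambda}$, I can define
\[
\begin{array}{rccl}
j:& Bar_1(W_{N,M,J_\lambda})&\longrightarrow& I_\lambda^{-L}\\
&\delta_p&\mapsto&\varphi_{\mu,p,\alpha},
\end{array}
\]
which is well defined and continuous for $\mu\geq\mu(L)$ by Lemma \ref{lemma:testfunctions8pi16pi} (i).

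Then I would argue that $\Psi\circ j$ is homotopic to the identity on $Bar_1(W_{N,M,J_\lambda})$: by Lemma \ref{lemma:testfunctions8pi16pi} (ii), $\frac{\tilde K^+ e^{\varphi_{\mu,p,\alpha}}}{\int_\Sigma \tilde K^+ e^{\varphi_{\mu,p,\alpha}}dV_g}\rightharpoonup \delta_p$ as $\mu\to+\infty$, and by the last statement of Proposition \ref{prop:lowsublevels8pi16pi} this forces $\Psi(\varphi_{\mu,p,\alpha})\to\delta_p$; letting $\mu$ run provides the homotopy (using that $p\mapsto \varphi_{\mu,p,\alpha}$ is continuous and that for $\mu$ large $\Psi\circ j$ stays close to, hence homotopic to, the identity of a compact ANR). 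Consequently $\Psi_*\circ j_*=\mathrm{Id}$ on $H_*(Bar_1(W_{N,M,J_\lambda});\Z_2)$, so $j_*$ is injective and $\Psi_*$ is surjective, giving $\dim(\tilde H_q(I_\lambda^{-L};\Z_2))\geq \dim(\tilde H_q(Bar_1(W_{N,M,J_\lambda});\Z_2))$; combined with the isomorphism from the first step this is the claim. The only delicate point, as in Proposition \ref{prop:nuova}, is the homotopy equivalence $\Psi\circ j\simeq\mathrm{Id}$ — one must check uniformity in $p\in W$ of the weak convergence and invoke that $Bar_1(W_{N,M,J_\lambda})$ is a compact ANR so that maps uniformly close to the identity are homotopic to it; everything else is a verbatim repetition of the argument for Proposition \ref{prop:nuova}.
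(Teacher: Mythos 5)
Your proposal is correct and follows essentially the same route as the paper, which simply declares the proof "completely analogous" to that of Proposition \ref{prop:nuova} with $W_{N,M,J_\lambda}$, $\varphi_{\mu,p,\alpha}$, Proposition \ref{prop:lowsublevels8pi16pi} and Lemma \ref{lemma:testfunctions8pi16pi} replacing their counterparts. You have correctly identified the key substitutions (including the choice of $W\cong W_{N,M,J_\lambda}$ as a compact subset of $\Sigma^+\setminus J_\lambda$) and spelled out the homotopy argument in slightly more detail than the paper does.
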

\begin{proof}
The proof is completely analogous to the one of the previous proposition, where  $W_{N,M,J_\lambda}$ and $\varphi_{\mu,p,\alpha}$ play the role of $Z_{N,M}$ and $\varphi_{\mu,\sigma}$ respectively, while Proposition \ref{prop:lowsublevels8pi16pi} and Lemma \ref{lemma:testfunctions8pi16pi} must be applied instead of Proposition \ref{prop:lowsublevelsgeneral} and Lemma \ref{lemma:testfunctionsgeneral}.
\end{proof}

\begin{proof}[Proof of Theorem \ref{thm:existencegeneral}]
By Lemma \ref{lemma:highsublevels} there exists $b>0$ so that the sublevel $I_\lambda^b$ is contractible, then we are in position to apply Proposition \ref{prop:nuova} and so for $L>0$ sufficiently large

\[
\dim(H_{q+1}(I_\lambda^b,I_\lambda^{-L};\Z_2))\geq\dim(\tilde H_q(Bar_k(Z_{N,M});\Z_2))\qquad\mbox{for any $q\geq 0$.}
\]
Hence, by virtue of \ref{H3}, $\dim(H_{q+1}(I_\lambda^b,I_\lambda^{-L};\Z_2))>0$ for some $q$, as it can be directly checked applying Theorem \ref{thm:formula} and recalling that $N^+=N+M$. Therefore, $I_{\lambda}^{L}$ is not a retract of $I_{\lambda}^b$, so the conclusion follows from Lemma~\ref{lemma:deformation}.

\end{proof}

\begin{proof}[Proof of Theorem \ref{thm:existence8pi16pi}]
We argue as in the proof of Theorem \ref{thm:existencegeneral}, indeed applying in this case Lemma \ref{lemma:highsublevels} and Proposition \ref{prop:nuova2} we have that for $b$ and $L$ sufficiently large positive

\[
\dim(H_2(I^b_\lambda, I^{-L}_\lambda;\Z_2))\geq\dim(H_1(Bar_1(W_{N,M,J_\lambda});\Z_2)).
\]
Lemma \ref{lemma:formula} combined with \ref{H4} allows to see that $\dim(H_2(I^b_\lambda, I^{-L}_\lambda;\Z_2))>0$, so we conclude again by Lemma~\ref{lemma:deformation}.
\end{proof}

\begin{proof}[Proof of Theorem \ref{thm:multiplicitygeneral}]
By virtue of Proposition \ref{prop:genericallyMorse}, we can fix $(K,g)\in\mathcal K_\ell\times \mathcal M$, such that any solution $u\in I_\lambda^b$ of \ref{equation} is nondegenerate. Next, combining Proposition \ref{prop:MorseIlambda} (with $a=-L$) and Proposition \ref{prop:nuova}, we get that for $L>0$ sufficiently large

\begin{eqnarray*}
\#\{\textrm{\small{solutions to \ref{equation}}}\}&\geq&
\sum_{q\geq0}\#\{\textrm{\small{critical points in $\{-L\leq I_\lambda\leq b\}$ with index $q$}}\}\\
&\geq& \sum_{q\geq0}\dim(\tilde H_q(Bar_k(Z_{N,M});\Z_2)).
\end{eqnarray*}
We conclude by Proposition \ref{thm:formula}.
\end{proof}

\begin{proof}[Proof of Theorem \ref{thm:multiplicity8pi16pi}]
The proof is completely analogous to the one of Theorem \ref{thm:multiplicitygeneral}, where $W_{N,M,J_\lambda}$ plays the role of $Z_{N,M}$ and Proposition \ref{prop:nuova2} and Lemma \ref{lemma:formula} are applied in place of Proposition \ref{prop:nuova} and Proposition \ref{thm:formula}.
\end{proof}

\subsection*{Acknowledgements}
F. D. M. has been supported by PRIN $201274$FYK7$\_005$ and Fondi Avvio alla Ricerca - Sapienza 2015, whereas R .L.-S. and D. R. have been supported by the Feder-Mineco Grant MTM2015-68210-P and by J. Andalucia (FQM116). During the preparation of this work R. L.-S. was hosted by University of Rome La Sapienza, and he wishes to thank this institution for the kind hospitality and F .D.M. for the invitation. The authors are grateful to W. Chen, C. Li and S. Kallel for their suggestions and discussions concerning the subject. Finally, they want to express their gratitude to the referees for their careful reading and their valuable comments on the manuscript.

\end{document}